\documentclass[12pt, reqno]{amsart}
\usepackage{amsfonts,amsmath,amssymb,amsthm,epsfig,cite,graphicx,hyperref,
	color, esint,fancyhdr, enumerate, latexsym,amsrefs, makecell}
\usepackage{color}
\usepackage[mathscr]{eucal}
\usepackage{comment}


\usepackage[nameinlink]{cleveref}


\voffset = -18pt
\hoffset = -27pt
\textwidth = 5.8in
\textheight = 8.9in
\numberwithin{equation}{section}

\theoremstyle{definition}
\newtheorem{definition}{Definition}[section]
\newtheorem{example}[definition]{Example}

\theoremstyle{remark}
\newtheorem{remark}[definition]{Remark}
\theoremstyle{plain}
\newtheorem{theorem}[definition]{Theorem}
\newtheorem{lemma}[definition]{Lemma}
\newtheorem{proposition}[definition]{Proposition}

\newtheorem{result}[definition]{Result}

\newtheorem{corollary}[definition]{Corollary}

\newcommand{\eps}{\varepsilon}

\newcommand{\Om}{\Omega}

\newcommand{\st}{\subset}
\newcommand{\St}{\Subset}
\newcommand{\steq}{\subset}

\newcommand{\OOm}{\overline{\Omega}}
\newcommand{\mco}{\mathcal{O}}
\newcommand{\D}{\mathbb{D}}


\newcommand{\disc}{\mathbb{D}}

\newcommand{\smoo}{\mathcal{C}}
\newcommand{\hol}{\mathcal{O}}


\newcommand{\rl}{{\sf Re}}

\newcommand{\impl}{\Longrightarrow}

\newcommand\wtil[1]{\widetilde{#1}}
\newcommand\ba[1]{\overline{#1}}



\newcommand{\CC}{\mathbb{C}^2}
\newcommand{\cplx}{\mathbb{C}}

\newcommand{\re}{\mathbb{R}}
\newcommand{\cn}{\mathbb{C}^n}



\begin{document}
	\title[A study of spirallike domains]{A study of spirallike domains: polynomial convexity, Loewner chains and dense holomorphic curves}
	\author{Sanjoy Chatterjee and Sushil Gorai}
	\address{Department of Mathematics and Statistics, Indian Institute of Science Education and Research Kolkata,
		Mohanpur -- 741 246}
	\email{sc16ip002@iiserkol.ac.in}
	
	\address{Department of Mathematics and Statistics, Indian Institute of Science Education and Research Kolkata,
		Mohanpur -- 741 246}
	\email{sushil.gorai@iiserkol.ac.in, sushil.gorai@gmail.com}
	\thanks{Sanjoy Chatterjee is supported by CSIR fellowship (File No-09/921(0283)/2019-EMR-I). Sushil Gorai is partially supported by a Core Research Grant (CRG/2022/003560) of SERB, Govt. of India}
	\keywords{Polynomial convexity , Spirallike domain, pseudoconvex domain, Filtering Loewner chain, Spaces of holomorphic mappings, Universal mapping, Composition operator}
	\subjclass[2020]{Primary: 32E20, 32H02, 30K20; Secondary: 47A16}

	\date{\today}


	\begin{abstract}
		In this paper, we  prove that the closure of a bounded pseudoconvex domain, which is spirallike with respect to a globally asymptotic stable holomorphic vector field, is polynomially convex. We also provide a necessary and sufficient condition, in terms of polynomial convexity, on
a univalent function defined on a strongly convex domain for embedding it into a filtering Loewner chain. Next, we provide an application of our first result.
We show that for any bounded pseudoconvex strictly spirallike domain $\Omega$ in $\cplx^n$ and given any connected complex manifold $Y$, there exists a holomorphic map from the unit disc to the space of all holomorphic maps from $\Omega$ to $Y$. 
This also yields us the existence of $\hol(\Omega, Y)$-universal map for any generalized translation on $\Omega$, which, in turn, is connected to the hypercyclicity of certain composition operators on the space of manifold valued holomorphic maps.
	\end{abstract}	 

	\maketitle

	\section{Introduction and statements of the results}
	
 The domains we study in this paper are pseudoconvex domains that are spirallike with respect to certain holomorphic vector fields.
	Recall that
		a holomorphic vector field $V$ on a domain  $\Om \subset \mathbb{C}^{n}$ is a real vector field on $\Om$  such that  $$V(z)=\sum_{i=1}^{n} a_{i}(z) \frac{\partial }{\partial  x_{i}}+b_{i}(z) \frac{\partial }{\partial  y_{i}},$$ where $(a_j(z)+ib_{j}(z))$ is holomorphic function on $\Om$  for all $j \in \{1,2, \cdots ,n\}$.  We denote the set of all holomorphic vector fields on $\cn$ by $\mathfrak{X}_{\mathcal{O}}(\cn)$.
	 Any  holomorphic map $F\colon \cn \to \cn$ can be viewed as a holomorphic vector field on $\cn$. We will use matrices with complex entries while talking about linear vector fields on $\cn$.
 We will need the  notion of spirallike domain with respect to a holomorphic vector field from \cite{CG} to move further into our discussion.
 
	\begin{definition}\label{D:def-spirallike}
		Let $\Omega$ and $\widetilde{\Om}$ be domains in $\mathbb{C}^{n}$, such that $0 \in \Om \subset \OOm \st \widetilde{\Om} \subseteq \cn$. Suppose that $\Phi$ be a holomorphic vector field on $\wtil{\Omega}$  such that $\Phi(0)=0$. Then $\Omega$ is  said to be spirallike with respect to $\Phi$, if for any $z\in \Omega$, the initial value problem 
		\begin{align*}
			\frac{dX}{dt}&=\Phi(X(t))\\
			X(0)&=z,
		\end{align*}	
		has a solution defined for all $t \geq 0$ with $X(t,z) \in \Om$ for all $t>0$ and  $X(t) \to 0$ as $t \to \infty$. We say that $\Om$ is {\em strictly spirallike} with respect to holomorphic vector field $\Phi$ if $X(t,z) \in \Om$ for all $t>0$ and for all $z \in \overline{\Om}$.
	\end{definition}	

 \noindent Next, we briefly mention some notions of stability of the equilibrium point of a system of differential equations (see \cite{perko} for details). The notions of stability will play a vital role in our study.   
	Let $ E \subset \mathbb{R}^{n}$ be an open set containing the origin. Suppose that   $f\colon E \to \mathbb{R}^{n}$ is a continuously differentiable mapping such that $f(0)=0$. Consider the system of differential equation
	\begin{align}\label{eq-ddeq1}
		\frac{dX(t)}{dt}&  = f(X(t)) ,~~
		X(0)  =x_{0}.
	\end{align}
	Assume that  the solution of the system \eqref{eq-ddeq1} 
	exists for every $t \geq 0$ and $\forall x_{0} \in E$. Then:
	\begin{itemize}
		\item The origin is said to be  a {\em stable equilibrium point} of the system \eqref{eq-ddeq1}, if  for every $\epsilon>0$ there exist $\delta >0$ such that $X(t,x_{0}) \in B(0, \epsilon)$ for every $x_{0} \in B(0, \delta)$. 
		\smallskip
		
		\item The origin is said to be {\em globally asymptotically stable equilibrium point} with $E=\re^n$ if the origin is stable and $\lim_{t\to \infty} X(t,x_{0}) =0$ for all $x_{0} \in \re^n$.
  \end{itemize}
  A vector field $V \in \mathfrak{X}_{\mathcal{O}}(\cn)$  is said to be {\em globally asymptotically stable vector field} if the origin is the  globally asymptotically stable equilibrium point of $V$. In this paper, we always consider  globally asymptotic stable vector field whose equilibrium point is the origin.
		\smallskip
	
In this paper we study the polynomial convexity of the closure of pseudoconvex strictly spirallike domains. We also study the embedding a univalent function into a filtering Loewner chain through polynomial convexity property along the lines of Hamada \cite{hamada2020}. We also use polynomial convexity of certain domains to study the dense holomorphic curves in the space of all holomorphic maps. We now describe each of these separately in the following subsections.

\subsection{Polynomial convexity} For a compact subset $K\subset \cn $ the Polynomially convex hull of $K$, denoted by $\widehat{K}$, is defined by  
$$
\widehat{K}:=\{z \in \cn:|p(z)| \leq \sup_{w \in K}|p(w)|, \forall p \in \mathbb{C}[z_{1},z_{2}, \cdots ,z_{n}]\}.
$$
We say that $K$ is polynomially convex if $\widehat{K}=K$. Polynomial convexity is the main ingredient in the study of uniform approximation by polynomials. In $\mathbb{C}$, a compact subset $K$ is polynomially convex if and only if $\mathbb{C}\setminus K$ is connected. In general, for $n >1$, it is difficult to determine whether a compact subset in $\cn$ is polynomially convex or not.  It is known that any compact convex subset of $\cplx^n$ is polynomially convex. In particular, the closure of any bounded convex domain is polynomially convex.  However, in  \cite[Example~2.7]{Izzo93}, it was shown that the closure of a bounded strongly pseudoconvex domain with a smooth boundary may not be polynomially convex. In \cite{joita2007}, Joi\c{t}a gave an example of a  strongly  pseudoconvex domain in $\cplx^n$ with real analytic boundary whose closure is not polynomially convex. The doamin in the example of Joi\c{t}a\cite{joita2007} is a is also a Runge domain.
This raises a natural question: {\em For which classes of pseudoconvex domains the closure is polynomially convex?} Hamada\cite{hamada2020} proved that bounded pseudoconvex domains, which are strictly spirallike with respect to certain linear vector field, have  polynomially convex closure. 
\begin{result}[Hamada]\label{T:PClinear}
		Let $A\in M_n(\cplx)$ such that $\inf_{||z||=1}\rl\langle Az, z\rangle>0$. Let $\Omega  \subset \cn$
		be a bounded pseudoconvex domain containing the origin such that
$e^{-tA}w \in \Om$ for all $t > 0$ and for all $w \in \ba{\Omega}$. Then, $\overline{\Omega}$ is polynomially convex.
	\end{result}

In this article, we are able to provide a  generalization of \Cref{T:PClinear}, which enlarges the  class of bounded pseudoconvex domains that have polynomially convex closure. We need the following definition for the demonstration of our results.
	The first result of this paper states that  the conclusion of \Cref{T:PClinear} is also true if the domain $D$ is spirallike with respect to any asymptotic stable holomorphic vector field. More precisely, we present:
	\begin{theorem}\label{T:Polynostrict}
		Let $V \in \mathfrak{X}_{\mathcal{O}}(\cn)(n \geq 2)$ be a complete asymptotic stable vector field. Let $ D \subseteq \cn$ be a bounded  pseudoconvex domain containing the origin.  If  there exists $\psi \in Aut(\cn)$ such that  $\psi(D)$ is strictly spirallike domain with respect to $V$  then $\overline{D}$ is polynomially convex.
	\end{theorem}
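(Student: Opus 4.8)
The plan is to remove the automorphism $\psi$ by conjugation, recasting the statement as an intrinsic one about $\overline{D}$, and then to produce a global plurisubharmonic exhaustion of $\cn$ whose zero–sublevel set is exactly $\overline{D}$; polynomial convexity then follows from soft Stein theory. First I would conjugate. Set $W:=\psi^{*}V$, the pull-back of $V$ by $\psi$; since $\psi\in\operatorname{Aut}(\cn)$ and $V$ is complete, $W$ is a complete holomorphic vector field whose flow is $g_{t}=\psi^{-1}\circ\varphi_{t}\circ\psi$, where $\varphi_{t}$ is the flow of $V$. Because $\psi$ is a homeomorphism of $\cn$ and $V$ is globally asymptotically stable, $W$ is globally asymptotically stable with equilibrium $p_{0}=\psi^{-1}(0)\in D$, and $D$ is strictly spirallike with respect to $W$: for $z\in\overline D$ one has $g_{t}(z)=\psi^{-1}\big(\varphi_{t}(\psi z)\big)\in D$ for $t>0$ and $g_{t}(z)\to p_{0}$. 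Conjugating once more by the translation $z\mapsto z-p_{0}$ — an affine, hence polynomial, automorphism that preserves polynomial convexity — I may assume $p_{0}=0$. Thus it suffices to prove the intrinsic statement: a bounded pseudoconvex domain $D\ni 0$ that is strictly spirallike with respect to a complete, globally asymptotically stable holomorphic vector field $W$ with $W(0)=0$ has polynomially convex closure. I want to stress where the difficulty now sits: \Cref{T:PClinear} cannot be applied to $\psi(D)$ and then transported back, precisely because $\psi$ is not polynomial and need not preserve polynomial convexity; the genuine new content is that $W$ is allowed to be nonlinear.

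Next I would build the spirallike gauge from the flow. Strict spirallikeness together with global asymptotic stability makes $\{g_{t}\}_{t\in\re}$ foliate $\cn\setminus\{0\}$ by orbits each of which meets $\partial D$ exactly once: as $t\to+\infty$ the orbit contracts to $0$, while as $t\to-\infty$ it escapes every compact set, and $g_{t}(\overline D)\subset D$ for $t>0$. Hence for $z\neq 0$ there is a unique time $\tau(z)\in\re$ with $g_{\tau(z)}(z)\in\partial D$, and I set $u(z):=\tau(z)$, $u(0):=-\infty$. Then $D=\{u<0\}$, $\overline D=\{u\le 0\}$, the sublevel sets $\{u\le c\}=\overline{g_{-c}(D)}$ are compact, and $u(z)\to+\infty$ as $z\to\infty$; so $u$ is a continuous exhaustion of $\cn$. (One may equally extract $u$, and its regularity, from the analysis of spirallike gauges in \cite{CG}.) The decisive property is that $u$ is \emph{plurisubharmonic} on $\cn$. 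This is the analogue, for a general asymptotically stable flow, of the classical fact that a balanced domain is pseudoconvex if and only if the logarithm of its Minkowski gauge is plurisubharmonic: here the level sets of $u$ are the biholomorphic flow–images $g_{-c}(D)$, so the Levi positivity encoded in the pseudoconvexity of $D$ is to be transported along the holomorphic flow to yield global plurisubharmonicity of $u$.

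Granting plurisubharmonicity, the conclusion is immediate: $\overline D=\{u\le 0\}$ is a sublevel set of a continuous plurisubharmonic exhaustion of the Stein manifold $\cn$, hence is $\hol(\cn)$–convex; and since every entire function is a locally uniform limit of its Taylor polynomials, the $\hol(\cn)$–convex hull of a compact set coincides with its polynomial hull, whence $\overline D$ is polynomially convex.

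The main obstacle is exactly the plurisubharmonicity of $u$ for a nonlinear complete vector field. Near $0$ the local contraction of the flow is governed by the linear part $A=dW(0)$, and in the linear model $W(z)=Az$ one recovers precisely the situation of \Cref{T:PClinear} and Hamada's gauge computation; away from $0$ one must instead exploit compactness of $\partial D$, the strict inward pointing of $W$ along $\partial D$ guaranteed by strict spirallikeness, and the invariance of plurisubharmonicity under the biholomorphisms $g_{t}$ in order to globalize. Handling the interface between these two regimes, together with the behaviour of $\tau$ across a possibly non-smooth $\partial D$ and near the equilibrium where $\tau\to-\infty$, is the technical heart of the argument.
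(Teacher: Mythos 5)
Your reduction to $\psi=\mathrm{id}$ works, but the justification you give for it contains a false assertion: an automorphism of $\cn$ \emph{does} preserve polynomial convexity of compact sets, because for compact $K\st\cn$ the polynomial hull coincides with the $\hol(\cn)$-hull (Taylor polynomials of an entire function converge uniformly on $K\cup\{z\}$), and the $\hol(\cn)$-hull is manifestly invariant under $\mathrm{Aut}(\cn)$. This is precisely how the paper disposes of $\psi$ in one line; the reason \Cref{T:PClinear} cannot simply be quoted is solely that $V$ is nonlinear, not that the hull fails to transport.

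The genuine gap is the plurisubharmonicity of your time-gauge $u=\tau$, which you defer as ``the technical heart'' but which is in fact the whole theorem, and which I do not believe holds as stated. Knowing that every sublevel set $\{u<c\}=X_{-c}(D)$ is pseudoconvex does not make $u$ plurisubharmonic (in $\cplx$ every domain is pseudoconvex, yet a continuous exhaustion of $\cplx$ need not be subharmonic). The balanced-domain analogy you invoke requires \emph{complex} homogeneity of the gauge, $h(\lambda z)=|\lambda|\,h(z)$ for all $\lambda\in\cplx$, whereas the flow only yields the real one-parameter relation $u(X_t(z))=u(z)-t$ along orbits; already for $V(z)=-z$ and a pseudoconvex, strictly starlike but non-circular $D$, the restriction of $u$ to a complex line missing the origin receives no constraint from the pseudoconvexity of $D$, so there is no reason for subharmonicity in the directions transverse to the orbits. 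The paper's proof constructs no plurisubharmonic function at all: it shows that $\{X_{-t}(D)\}_{t>0}$ is a neighborhood basis of $\overline{D}$ (strict spirallikeness gives $\overline{D}\st X_{-t}(D)$ for $t>0$, and a Gronwall-type estimate on the flow gives $X_{-t}(\overline{D})\st U$ for all small $t$), each member of which is pseudoconvex and, crucially, Runge by \Cref{R:SG}; hence $\overline{D}$ is a Stein compactum with $\hol(\overline{D})\st\mathcal{P}(\overline{D})$, and \Cref{R:steincompacta} concludes. The Runge property of spirallike domains is the essential input that your proposal never uses; without it, or without an actual proof of plurisubharmonicity of $u$, the argument does not close.
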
 

\begin{remark}
    It is proved in \cite{CG} (in \cite{Hamada15} for linear case) that any spirallike domain with respect to an asymptotically stable vector field is Runge. But, in view of Joi\c{t}a\cite{joita2007}, this does not imply the closure of the domain, in case the domain is bounded, is polynomially convex.
    Hamada also provided an example in \cite{hamada2020} showing that strictly spirallike is crucial, even in case the vector field is linear. This suggests that the strictly spirallike assumption in 
    \Cref{T:Polynostrict} is a natural assumption.
\end{remark}

\noindent The following corollary gives a condition of polynomial convex closure in terms of the defining function of the domain. For a holomorphic vector field $V(z)=\sum_{i=1}^{n} a_{i}(z) \frac{\partial }{\partial  x_{i}}+b_{i}(z) \frac{\partial }{\partial  y_{i}},$ we define $\wtil{V}(f)(z):=\sum_{j=1}^{n}(a_{j}(z)+ib_{j}(z))\frac{\partial f}{\partial z_{j}}$, where $f \in \smoo(\Om)$.
\begin{corollary}\label{T:polystrict1}
Let $D  \subset \cn$ be a bounded pseudoconvex  domain  with $\smoo^{\alpha}$ boundary, for some $\alpha \geq 1$ that contains the origin. Let $V \in \mathfrak{X}_{\mathcal{O}}(\cn)$ be a complete globally asymptotic stable vector field. Suppose that $\theta\colon \mathbb{R} \times \cn \to \cn$ be the flow of the vector field $V$. Assume that  $U \st \cn$ be an open subset such that  $\{\theta(t,z)|t \geq 0, z \in \overline{D}\} \subset  U$ and $r\colon U  \to \mathbb{R} $ be a defining function of $D$.  If $\rl(\wtil{V}(r))<0$ on $U$, then $\overline{D}$ is  polynomially convex.
\end{corollary}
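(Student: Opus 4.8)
My plan is to verify that the hypothesis $\rl(\wtil{V}(r)) < 0$ forces $D$ itself to be strictly spirallike with respect to $V$, after which \Cref{T:Polynostrict} applies immediately with $\psi$ taken to be the identity automorphism. Concretely, I would fix $z \in \overline{D}$ and track the real-valued quantity $t \mapsto r(\theta(t,z))$; this is well defined and of class $\smoo^1$ for all $t \geq 0$, since the forward orbit remains in $U$ by assumption and $V$ is complete, while $r$ is defined precisely on $U$.

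Differentiating along the flow gives $\tfrac{d}{dt} r(\theta(t,z)) = (Vr)(\theta(t,z))$, where $V$ is regarded as the real derivation $\sum_i a_i \partial_{x_i} + b_i \partial_{y_i}$. The crucial step is to rewrite this real directional derivative in terms of $\wtil{V}$. Substituting the Wirtinger relations $\partial_{x_j} = \partial_{z_j} + \partial_{\bar z_j}$ and $\partial_{y_j} = i(\partial_{z_j} - \partial_{\bar z_j})$, and using that $r$ is real-valued (so $\partial_{\bar z_j} r = \overline{\partial_{z_j} r}$) together with the holomorphy of each $a_j + i b_j$ (so $a_j - i b_j = \overline{a_j + i b_j}$), I expect to obtain
\begin{equation*}
(Vr)(w) = \wtil{V}(r)(w) + \overline{\wtil{V}(r)(w)} = 2\,\rl\big(\wtil{V}(r)\big)(w).
\end{equation*}
Combined with the standing hypothesis, this yields $\tfrac{d}{dt} r(\theta(t,z)) = 2\,\rl(\wtil{V}(r))(\theta(t,z)) < 0$ for every $t \geq 0$.

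Consequently $t \mapsto r(\theta(t,z))$ is strictly decreasing; since $z \in \overline{D}$ forces $r(z) \leq 0$, we get $r(\theta(t,z)) < 0$, and hence $\theta(t,z) \in D$, for all $t > 0$ and all $z \in \overline{D}$. Together with the convergence $\theta(t,z) \to 0$ furnished by the global asymptotic stability of $V$, this is exactly the statement that $D$ is strictly spirallike with respect to $V$ in the sense of \Cref{D:def-spirallike}. Invoking \Cref{T:Polynostrict} with $\psi = \mathrm{id}$ then shows that $\overline{D}$ is polynomially convex.

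The one step requiring care is the identity $(Vr) = 2\,\rl(\wtil{V}(r))$, which is precisely where both the reality of the defining function and the holomorphy of the vector field's coefficients enter; once this is in hand, the corollary is a direct translation of the hypothesis into the strict spirallike condition that feeds \Cref{T:Polynostrict}.
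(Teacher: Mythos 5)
Your proposal is correct and follows essentially the same route as the paper: both differentiate $t \mapsto r(\theta(t,z))$ along the flow, identify that derivative with $2\,\rl(\wtil{V}(r))(\theta(t,z))<0$, deduce that $D$ is strictly spirallike with respect to $V$, and then invoke \Cref{T:Polynostrict} with $\psi=\mathrm{id}$. Your Wirtinger-calculus justification of the identity $(Vr)=2\,\rl(\wtil{V}(r))$ is a slightly more explicit version of the chain-rule computation the paper records in its display \eqref{E:s1}.
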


\subsection{Loewner chains} 
The issue of embedding univalent functions within Loewner chains is the subject of our next discussion. Loewner\cites{Loe} invented a technique, now referred to as Loewner chains, for embedding univalent functions within particular families of univalent functions. The Loewner theory on the Kobayashi hyperbolic complex manifold was studied in \cites{BCM22009}. Poreda studied the Loewner chain on the polydisc in his papers \cites{Poreda87a, Poreda87b}, and \cite{Sch18} (see \cite{HamadaBall2021}, \cite{Hamada2021} and the references therein for an overview of recent results about the embedding of the univalent maps into the Loewner chain).  The embedding of a univalent map defined on a bounded strongly convex domain into some Loewner chain is the subject of our second result. We need following definitions before we present the statement.
	Let $D \Subset \cn$ be a domain containing the origin and 
 $$
 \mathcal{S}(D):=\{f:D\to \cn: f(0)=0, df(0)=I_{n}, f ~~ \text{is univalent} \}.
 $$
 \begin{definition}
Let $d \in [1, \infty]$. A family  of mappings $f_{t}\colon D \to \cn $ is called $L^{d}$-normalized Loewner chain on $D$ if 
 
 \begin{itemize}
     \item [i.]
    For each fix $t \geq 0$, $f_{t} \colon D \to \cn$ is an univalent holomorphic mapping such that $f_{t}(0)=0$ and $df_{t}(0)=e^{t}I_{n}$.
    \item [ii.]
 For $0\leq s <t<\infty$,  $f_{s}(D) \subset f_{t}(D)$
 \item [iii.]
 for any compact set $K\St M$ and any $T > 0$, there exists a function $\kappa_{K,T} \in L^{d}([0,1], [0,\infty))$ such that 
 such that for all $z \in K$ and for all $0 \leq s \leq  t \leq  T$ we have 
 $\|f_{s}(z)-f_{t}(z)\| \leq \int_{s}^{t}\kappa_{K,T}(x)\,dx$.
 \end{itemize}
 \end{definition} Loewner range of Loewner chain  is defined by  biholomorphism class of $R(f_{t}):=\cup_{s \geq 0} \Om_{s}$. 
	A function  $f \in \mathcal{S}(D)$ is said to be embedded into a $L^{d}$-normalized Loewner chain if there exists a $L^{d}$-normalized Loewner chain $(f)_{t}$ such that $f_{0}=f$. Here we put  some notations.
	\begin{align*}
		\mathcal{S}^{1}(D)&:=\{f \in \mathcal{S}: f~~\text{embeds into a normalized Loewner chain $(f)_{t}$}\}\\
		\mathcal{S}^{0}(D)&:=\{f \in \mathcal{S}^{1}: \{e^{-t}f_{t}\}_{t \geq 0} ~~\text{is a normal family} \}\\
		\mathcal{S_{R}}(D)&:=\{f \in \mathcal{S}: f(D)~~\text{is a Runge domain} \}
	\end{align*}
 For the unit disc $\disc\subset\cplx$, $\mathcal{S}^{0}(\disc)=\mathcal{S}^{1}(\disc
 )=\mathcal{S}(\disc)$, but,
	 for $n \geq 2$, the following chain of inclusions holds for $D=\mathbb{B}^n$: $$\mathcal{S^{\circ}}(\mathbb{B}^{n}) \subsetneq \mathcal{S}^{1}(\mathbb{B}^{n}) \subsetneq \mathcal{S}(\mathbb{B}^{n}).$$ The class $\mathcal{S}^{0}(\mathbb{B}^{n})$ is compact in the topology of uniform convergence on compact subsets of $\mathbb{B}^{n}$ but $\mathcal{S}(\mathbb{B}^{n}), ~\mathcal{S}^{1}(\mathbb{B}^{n})$ are non compact. Hence, in  higher dimension, $\mathcal{S}^{0} \subsetneq \mathcal{S}^{1}(\mathbb{B}^{n})$ (see \cite[Section 8]{GKbook}). In \cite{FoarnaessWold2020}, it is shown that $\mathcal{S}^{1}(\mathbb{B}^{n}) \subsetneq \mathcal{S}(\mathbb{B}^{n})$. Recently,   Bracci-Gumenyuk \cite{Bracci2022} showed that  $\mathcal{S_{R}}(\mathbb{B}^{n}) \subsetneq \mathcal{S}^{1}(\mathbb{B}^{n})$. In \cite[Definition 1.1]{ABW2015}, Arosio-Bracci-Wold introduced the the notion of filtering normalized Loewner chain on $\mathbb{B}^{n}$. We mention it here for any bounded domain. 
	\begin{definition} \label{D:filter}
		Let $D \Subset \cn$ and  $(f_{t})$ be normalized Loewner chain in $D$. We say that $(f_{t})$ is filtering normalized Loewner chain provided the family $\Om_{t}:=f_{t}(D)$ satisfies the following conditions.
		\begin{enumerate}
			\item [1.]
			$\overline{\Omega}_{s} \subset \Om_{t}$ for all $t >s$; and 
			\item [2.]
			for any open set $U$ containing $\overline{\Omega}_{s}$ there exist $t_{0}>s$ such that $\Om_{t} \subset U$ for all $t \in (s, t_{0})$.
		\end{enumerate}
	\end{definition}
 Let 
\begin{multline*}
  \mathcal{S}_{\mathfrak{F}}^{1}(D)=\{f \in \mathcal{S}(D): f~\text{embeds into filtering normalized Loewner chain,} R(f_{t})=\cn\}.
\end{multline*}
 
\noindent The connection between an univalent function to be embedded in a filtering normalized Loewner chain and the polynomial convexity of the closure of the image under that function was first explored by Arosio-Bracci-Wold\cite{ABW2015}. They proved the following result for $\Om \Subset \cn $ be a bounded pseudoconvex domain with $\smoo^{\infty}$ boundary, which is  biholomorphic to the open unit ball.
 
\begin{result}\cite[Theorem 1.2]{ABW2015} \label{T:ABW}
		Let $n \geq 2 $ and let $f \in \mathcal{S}_{\mathcal{R}}$.  Assume that $\Om :=f(\mathbb{B}^{n})$ is bounded strongly pseudoconvex domain with $C^{\infty}$ boundary. Then $f \in \mathcal{S}_{\mathfrak{F}}^{1}$ if and only if $\overline{\Omega}$ is polynomially convex. 
	\end{result}
\noindent	In the same article, Arosio-Bracci-Wold deduced that $\overline{\mathcal{S}_{\mathfrak{F}}^{1}(\mathbb{B}^{n})}=\mathcal{S_{R}} (\mathbb{B}^{n})$ as a corollary of \Cref{T:ABW} (see \cite[Corollary 3.3]{ABW2015}).
	Our second result in this article  we replace the unit ball with a strongly convex domain.
	\begin{theorem}\label{T:loewner}
		Let $0 \in D \Subset \cn$ be a strongly convex domain with $C^{m}$ boundary  and $f\colon D \to f(D)$ be a biholomorphism. Assume that $\Om:=f(D)$ is  bounded strongly pseudoconvex domain with $C^{m}$ boundary for some $m>2+\frac{1}{2}$. Then f can be embedded into a filtering $L^d$-Loewner chain with Loewner range $\cn $  if and only if  $\overline{f(D)}$ is polynomially convex.
	\end{theorem}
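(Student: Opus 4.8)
The plan is to prove the two implications separately, modelling the argument on Arosio--Bracci--Wold (\Cref{T:ABW}) and replacing the homogeneity of $\mathbb{B}^n$, which drives their construction, by the convex geometry of $D$ together with the boundary regularity forced by the hypothesis $m>2+\tfrac12$.

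For the forward implication (``only if''), suppose $f=f_{0}$ sits in a filtering $L^{d}$-Loewner chain $(f_{t})$ with $R(f_{t})=\cn$, and write $\Om_{t}:=f_{t}(D)$. The filtering conditions of \Cref{D:filter} are tailored precisely so that $\overline{\Om}_{0}$ is $\mathcal{O}(\Om_{t})$-convex for every $t>0$: condition (2) lets one surround $\overline{\Om}_{0}$ by domains $\Om_{t'}$ with $t'$ close to $0$ that avoid any prescribed exterior point, and the standard holomorphic-convexity argument on the nested exhaustion then separates that point from $\overline{\Om}_{0}$ by a function in $\mathcal{O}(\Om_{t})$. Next, since a filtering chain is Runge-exhausted inside its Loewner range and here that range is all of $\cn$, each $\Om_{t}$ is Runge in $\cn$; hence polynomials are dense in $\mathcal{O}(\Om_{t})$ and the $\mathcal{O}(\Om_{t})$-hull of the compact set $\overline{\Om}_{0}=\overline{f(D)}$ coincides with its polynomial hull. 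Combining the two facts yields that $\overline{f(D)}$ is polynomially convex. Note this half uses neither the strong convexity of $D$ nor the regularity exponent $m$.

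For the converse (``if''), assume $\overline{\Om}=\overline{f(D)}$ is polynomially convex. Since $\Om$ is bounded strongly pseudoconvex with $C^{m}$ boundary and $\overline{\Om}$ is polynomially convex, $\overline{\Om}$ is approximable from outside by automorphisms of $\cn$ via Anders\'en--Lempert / Forstneri\v{c}--Rosay theory, which I would use to build an increasing family $\{\Om_{t}\}_{t\ge 0}$ with $\Om_{0}=\Om$, $\overline{\Om}_{s}\subset \Om_{t}$ for $s<t$, each $\overline{\Om}_{t}$ polynomially convex and strongly pseudoconvex, $\bigcup_{t}\Om_{t}=\cn$, and the filtering property~(2). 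The strong convexity of $D$ supplies the base of the chain: $D$ is strictly starlike ($\overline{rD}\subset D$ for $0\le r<1$), complete hyperbolic and taut by Lempert's theory, so the abstract Loewner machinery of Herglotz vector fields and evolution families is available on $D$ exactly as on $\mathbb{B}^{n}$. With the exhaustion in hand I would manufacture biholomorphisms $f_{t}\colon D\to\Om_{t}$ with $f_{0}=f$ and renormalize, absorbing the derivative factor through the starlike scaling flow of $D$, so that $f_{t}(0)=0$ and $df_{t}(0)=e^{t}I_{n}$. Conditions (i)--(ii) of a normalized Loewner chain then follow from subordination $\Om_{s}\subset\Om_{t}$ and the normalization, while filtering and $R(f_{t})=\cn$ are inherited from the construction of $\{\Om_{t}\}$.

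The delicate point, and the step I expect to be the main obstacle, is the $L^{d}$-regularity (condition (iii)) in this converse. Transplanting the construction from the homogeneous ball to a general strongly convex $D$ forces one to replace the explicit automorphisms of $\mathbb{B}^{n}$ by the non-explicit evolution family generated by a Herglotz field on $D$, and then to control how its transition maps depend on $t$. This is exactly where the hypothesis $m>2+\tfrac12$ is consumed: boundary regularity of biholomorphisms between $C^{m}$ strongly pseudoconvex and strongly convex domains guarantees that the transition maps admit an $L^{d}$ modulus of continuity in $t$ uniformly on compact subsets of $D$. I anticipate that this regularity bookkeeping, rather than any single conceptual step, is the crux of the argument, with the geometric and polynomial-convexity inputs entering only to guarantee filtering and the full range $\cn$.
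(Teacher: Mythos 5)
Your forward implication is essentially the paper's: the filtering conditions make $\{\Omega_t\}_{t>0}$ a neighborhood basis of $\overline{\Omega}_0$, and the fact that each $\Omega_t$ is Runge in the Loewner range $\cn$ (the paper cites Hamada's Runge-pair theorem for this; you assert it) turns $\mathcal{O}$-convexity into polynomial convexity. The paper packages this via \Cref{R:steincompacta}, but the content is the same, and you are right that this half uses neither convexity of $D$ nor the exponent $m$.

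The converse, however, has a genuine gap, and the gap is precisely the step your plan leaves unspecified: producing biholomorphisms $f_t\colon D\to\Omega_t$ with $f_0=f$. An abstract increasing, filtering, polynomially convex exhaustion $\{\Omega_t\}$ of $\cn$ built by Anders\'en--Lempert pushing-out gives you domains, but there is no reason an arbitrary such $\Omega_t$ is biholomorphic to the \emph{fixed} domain $D$, let alone by a map that is normalized at $0$ and varies measurably in $t$; invoking ``Herglotz vector fields and evolution families on $D$'' does not manufacture these maps either, since the evolution-family formalism presupposes the chain you are trying to build. The paper's resolution is a convexification step that your proposal never isolates: because $\Omega$ is strongly pseudoconvex, biholomorphic to a strongly convex domain, and has polynomially convex closure, \Cref{P:global Narshiman} produces $\Psi\in \mathrm{Aut}(\cn)$ with $\Psi(0)=0$ and $\Psi(\Omega)$ strongly convex. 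Then the chain is completely explicit, $f_t=\Psi^{-1}\bigl(e^{t}\,\Psi\circ f\bigr)$ (the case $V=-I$ of \Cref{L:filter}), and every requirement --- injectivity, normalization $df_t(0)=e^tI_n$, filtering via strict spirallikeness of a convex domain with respect to $-I$, range $\cn$, and the $L^d$ bound, which is a trivial Lipschitz-in-$t$ estimate on compacts --- is immediate. This also shows that your diagnosis of where $m>2+\tfrac12$ is consumed is wrong: it is not spent on $L^d$ bookkeeping for transition maps, but inside \Cref{P:global Narshiman}, where Hurumov's theorem (\Cref{R: pinchuk}) gives a $C^{m-1/2}$, hence at least $C^{2}$, extension of the biholomorphism to $\overline{\Omega}$, and the Mergelyan-type approximation (\Cref{R: foarnaess}) in $C^{2}$ norm lets \Cref{L:convexify} transfer strong convexity to the approximating holomorphic images before Anders\'en--Lempert is applied. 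Without identifying this convexification step, the converse as you outline it does not close.
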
	
	\noindent It also follows from  \Cref{T:loewner} and Andersen-Lempert theorem (\cite[Theorem 2.1]{AnderLemp}) that, for any bounded strongly convex domain $D \Subset \cn $,
	  $\overline{\mathcal{S}_{\mathfrak{F}}^{1}(D)}=\mathcal{S_{R}} (D)$ (See \Cref{C:cor1}). In order to prove \Cref{T:loewner}, we proved the following theorem which might be of independent interest. 	
	\begin{theorem}\label{P:global Narshiman}
		Let  $\Om \Subset \cn $ be a strongly pseudoconvex domain with $C^{k}$ boundary  which is biholomorphic to some bounded strongly convex domain with $C^{k}$ boundary for some $k>2+\frac{1}{2}$. Then the following are equivalent.
		\begin{enumerate}
			\item[1.]
			$\overline{\Om}$ is polynomially convex.
			\item[2.]
		$\exists~ \Psi \in \text{Aut}(\cn)$ such that	$\Psi(\Om)$ is strongly convex.
		\end{enumerate}
		Moreover, if one of the conclusions holds then $\Om$ is a Runge domain.
	\end{theorem}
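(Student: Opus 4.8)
The plan is to prove the two implications separately, to isolate the genuinely difficult direction, and to harvest the Runge conclusion as a by-product.

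The implication $(2)\Rightarrow(1)$ rests on the observation that automorphisms of $\cn$ preserve polynomial convexity of compacta. Indeed, for every compact $K\st\cn$ the Oka--Weil theorem gives that the $\hol(\cn)$-convex hull of $K$ coincides with $\widehat{K}$: if $z_0\in\widehat K$ and $h\in\hol(\cn)$, then approximating $h$ uniformly on the polynomially convex set $\widehat K$ by polynomials yields $|h(z_0)|\le\|h\|_K$. Since $\Phi\in\operatorname{Aut}(\cn)$ induces, via $h\mapsto h\circ\Phi$, a bijection of $\hol(\cn)$ onto itself, one gets $\widehat{\Phi(K)}=\Phi(\widehat K)$, so $\Phi$ carries polynomially convex sets to polynomially convex sets. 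Granting this, if $\Psi(\Om)$ is strongly convex then $\overline{\Psi(\Om)}$ is a compact convex set, hence polynomially convex, and applying the observation to $\Psi^{-1}$ shows that $\OOm=\Psi^{-1}\!\big(\overline{\Psi(\Om)}\big)$ is polynomially convex.

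For the converse I would fix a biholomorphism $g\colon\Om\to B$ onto a bounded strongly convex domain $B$ (provided by the hypothesis of $(1)$), and aim to produce $\Psi\in\operatorname{Aut}(\cn)$ so close to $g$ in the $\smoo^{2}(\OOm)$ norm that $\Psi(\bdy\Om)$ is a small $C^{2}$-perturbation of the strongly convex hypersurface $\bdy B=g(\bdy\Om)$; as strong convexity is an open condition in $\smoo^{2}$ on compact hypersurfaces, $\Psi(\Om)$ is then strongly convex. Two preliminary facts are required. First, $\Om$ is Runge: using that $\OOm$ is polynomially convex while $\Om$ is strongly pseudoconvex, a boundary peak function together with Oka--Weil approximation on $\OOm$ forces $\widehat K\st\Om$ for every compact $K\St\Om$, which is the Runge property (this also yields the ``moreover'' assertion, which by the equivalence then holds under either hypothesis). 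Second, since both boundaries are of class $\smoo^{k}$ with $k>2+\tfrac12$, the Fefferman--Bell--Ligocka boundary regularity for biholomorphisms of strongly pseudoconvex domains extends $g$ to a map of class $\smoo^{2}$ on $\OOm$; this is exactly where the regularity threshold enters.

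The core step is approximating $g$ by automorphisms up to the boundary, which I would carry out in two stages. Using that $\OOm$ is polynomially convex (so it admits a neighbourhood basis of Runge Stein domains) together with strong pseudoconvexity, I would first approximate $g$ in the $\smoo^{2}(\OOm)$ norm by a map $\tilde g$ holomorphic on an open neighbourhood $U\supseteq\OOm$, biholomorphic near $\OOm$, and with $\tilde g(\Om)$ a small perturbation of $B$, hence strongly convex and in particular Runge. Now $\tilde g$ is holomorphic on a neighbourhood of the compact polynomially convex set $\OOm$, maps $\Om$ onto a Runge domain, and is isotopic to the identity through biholomorphisms with Runge images (built by contracting the convex domain $B$ toward the origin and pulling back, using that $\Om$ is Runge); hence the Andersén--Lempert theorem (\cite[Theorem 2.1]{AnderLemp}) produces $\Psi\in\operatorname{Aut}(\cn)$ approximating $\tilde g$, and therefore $g$, in $\smoo^{2}(\OOm)$. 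The perturbation argument above then gives strong convexity of $\Psi(\Om)$.

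The main obstacle is precisely this passage from interior to boundary: Andersén--Lempert approximation is intrinsically an approximation on compact subsets of the domain of holomorphy, whereas strong convexity is a condition on $\bdy\Om$. Reconciling the two is what forces the two-stage scheme — boundary regularity of $g$ (hence $k>2+\tfrac12$) to make sense of $\smoo^{2}(\OOm)$-closeness, and polynomial convexity of $\OOm$ (hence a Runge Stein neighbourhood basis) to first replace $g$ by a genuinely neighbourhood-holomorphic $\tilde g$ to which the automorphism approximation applies on the compact $\OOm$. Constructing the Runge isotopy and checking that the error estimates persist under two differentiations are the technical points demanding the most care.
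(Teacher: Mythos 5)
Your proposal follows essentially the same route as the paper: $(2)\Rightarrow(1)$ via invariance of polynomial convexity under $\mathrm{Aut}(\cn)$ together with polynomial convexity of compact convex sets, and $(1)\Rightarrow(2)$ by extending the biholomorphism $C^{2}$-smoothly to $\OOm$ (Hurumov/Fefferman-type boundary regularity, which is where $k>2+\tfrac12$ enters), approximating it in the $C^{2}(\OOm)$ norm by maps holomorphic on a neighborhood (a Mergelyan-type theorem of Forn\ae ss), feeding the result into the Anders\'en--Lempert theorem on a Runge neighborhood supplied by the polynomial convexity of $\OOm$, and concluding via the $C^{2}$-openness of strong convexity. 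The only cosmetic difference is that the paper applies Anders\'en--Lempert to the inverse map from the convexified (hence starshaped) image, so no isotopy is needed, whereas you apply it to the forward map and must construct a Runge isotopy; both variants are standard and the rest of your outline, including the Runge conclusion, matches the paper's argument.
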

\begin{remark}
\Cref{P:global Narshiman} is a general version of \cite[Proposition 3.6]{ABW2015} and  \cite[Proposition 3.1]{hamada2020}.
 \end{remark}
 
\noindent The following corollary provides a class of strongly pseudoconvex domains that are biholomorphic to strongly convex domains through automorphisms of $\cplx^n$.
\begin{corollary}
    Let $\Om \Subset \cn$ be a strongly pseudoconvex domain with $\smoo^{\alpha}$ boundary which is biholomorphic to a strongly convex  domain with $\smoo^{\alpha}$ boundary for $\alpha >2+\frac{1}{2}$ and $\Om$ is spirallike  with respect to a globally asymptotically stable vector field. Then there exists $\Psi \in Aut(\cn)$ such that $\Psi(\Om)$ is strongly convex.
\end{corollary}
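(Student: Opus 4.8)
The plan is to obtain this corollary by feeding the polynomial convexity of $\overline{\Om}$ into the equivalence of \Cref{P:global Narshiman}. All the standing hypotheses of \Cref{P:global Narshiman} are already granted: $\Om \Subset \cn$ is strongly pseudoconvex with $\smoo^{\alpha}$ boundary for some $\alpha > 2+\frac{1}{2}$, and $\Om$ is biholomorphic to a bounded strongly convex domain with $\smoo^{\alpha}$ boundary. Hence it suffices to verify condition $(1)$ there, namely that $\overline{\Om}$ is polynomially convex; the implication $(1)\Rightarrow(2)$ then furnishes the required $\Psi \in \operatorname{Aut}(\cn)$ with $\Psi(\Om)$ strongly convex, and the ``moreover'' clause re-confirms that $\Om$ is Runge.

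To produce polynomial convexity I would invoke \Cref{T:Polynostrict}. Let $V$ be the globally asymptotically stable holomorphic vector field with respect to which $\Om$ is spirallike; note $0 \in \Om$ and $\Om$ is bounded and pseudoconvex, so the quantitative hypotheses of \Cref{T:Polynostrict} (with $D=\Om$ and $\psi=\mathrm{id}$) are in place. Since $V$ is complete its time-$t$ flow maps $\theta(t,\cdot)$ are injective holomorphic self-maps of $\cn$, in fact automorphisms, and asymptotic stability gives $\theta(t,z)\to 0$ for every $z$. Thus \Cref{T:Polynostrict} will yield that $\overline{\Om}$ is polynomially convex \emph{provided} $\Om$ meets the strictly spirallike requirement of that theorem, i.e. $\theta(t,z)\in\Om$ for all $t>0$ and all $z\in\overline{\Om}$.

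The step that requires genuine care, and which I expect to be the main obstacle, is upgrading the hypothesis ``spirallike'' to ``strictly spirallike'': the remark following \Cref{T:Polynostrict} recalls Hamada's example showing that strictness cannot simply be dropped. Here I would exploit the strong pseudoconvexity of $\partial\Om$. Fix a defining function $r$ with $\Om=\{r<0\}$ and a boundary point $z_0\in\partial\Om$. By continuity of the flow and spirallikeness of interior points, $\theta(t,z_0)\in\overline{\Om}$ for all $t\ge 0$, so $g(t):=r(\theta(t,z_0))\le 0$ with $g(0)=0$ and $g'(t)=2\,\rl\bigl(\wtil{V}(r)\bigr)(\theta(t,z_0))$. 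Were $\theta(t_1,z_0)\in\partial\Om$ for some $t_1>0$, then $g$ would attain its maximum value $0$ again at $t_1$, forcing $\rl(\wtil{V}(r))=0$ there, i.e. $V$ tangent to $\partial\Om$ along the contact; using that each $\theta(t,\cdot)$ is an automorphism so that $\theta(t,\Om)\subseteq\Om$ is a nested family of strongly pseudoconvex domains, internal tangency of $\theta(t_1,\Om)$ with $\Om$ that persists along the trajectory is ruled out by the strict positivity of the Levi form (equivalently, the sign mechanism behind \Cref{T:polystrict1}). This excludes any boundary contact, so $\theta(t,z_0)\in\Om$ for every $t>0$ and $\Om$ is strictly spirallike.

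Granting this upgrade, \Cref{T:Polynostrict} applies and $\overline{\Om}$ is polynomially convex. Finally, the equivalence $(1)\Rightarrow(2)$ of \Cref{P:global Narshiman} produces $\Psi\in\operatorname{Aut}(\cn)$ with $\Psi(\Om)$ strongly convex, which completes the proof.
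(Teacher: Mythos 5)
Your overall architecture is certainly the intended one: the paper states this corollary without giving a separate proof, and the only available route is the one you take --- establish that $\overline{\Om}$ is polynomially convex and then apply the implication $(1)\Rightarrow(2)$ of \Cref{P:global Narshiman}. You are also right to single out the passage from ``spirallike'' to ``strictly spirallike'' as the real difficulty: \Cref{T:Polynostrict} genuinely needs strictness, because the shrinking Runge--Stein neighborhood basis $\{X_{-t}(D)\}$ built in its proof requires $\overline{D}\subset X_{-t}(D)$ for \emph{arbitrarily small} $t>0$, which is exactly the strict condition. (Global asymptotic stability plus compactness does give $X_{T_0}(\overline{\Om})\subset\Om$ for some large $T_0$, but that yields only one large Runge neighborhood, not the basis needed to invoke \Cref{R:steincompacta}.)

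The step you use to close that gap does not hold up. From $g(t_1)=0$ you correctly deduce $\rl(\wtil{V}(r))=0$ at $\theta(t_1,z_0)$, i.e.\ tangency of $V$ to $\partial\Om$ there. But the assertion that internal tangency of the nested strongly pseudoconvex domains $\theta(t_1,\Om)\subseteq\Om$ is ``ruled out by the strict positivity of the Levi form'' is false as a general principle: two internally tangent balls are both strongly convex, so nothing in the Levi form forbids a proper subdomain from touching $\partial\Om$. Likewise, appealing to ``the sign mechanism behind \Cref{T:polystrict1}'' is circular, since $\rl(\wtil{V}(r))<0$ is a \emph{hypothesis} of that corollary, not a consequence of strong pseudoconvexity; all that spirallikeness yields at boundary points is the non-strict inequality $\rl(\wtil{V}(r))\le 0$, and the Levi form constrains only the complex tangential directions, saying nothing about the transversal component of $V$. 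What the hypotheses actually give is the dichotomy implicit in \Cref{L:spirallike}: if strictness fails at $z_0\in\partial\Om$, then $\theta(t,z_0)$ lies in $\partial\Om$ for all $t$ in some maximal interval $[0,T]$ with $0<T<\infty$ (it cannot stay on $\partial\Om$ forever since it converges to $0\in\Om$), and $V$ is tangent to $\partial\Om$ along that whole arc. Excluding this is precisely why the paper's own corollary following \Cref{L:spirallike} assumes the transversality $V(z)\notin T_z\partial\Om$, which is not among the present hypotheses. So as written your proof has a genuine gap at exactly the step you flagged as delicate; one must either add strictness or transversality to the hypotheses, or supply a new argument showing that a complete globally asymptotically stable holomorphic field cannot keep a boundary trajectory of a spirallike strongly pseudoconvex domain inside $\partial\Om$ for positive time.
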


\subsection{Dense holomorphic curves} Next, we will demonstrate an application of \Cref{T:Polynostrict} in the context of finding a dense  holomorphic map and constructing universal mapping.  For any two complex manifolds $X, Y$, the set of all holomorphic maps from $X $ to $Y$ is denoted by $\mco(X, Y)$. If $Y$ is $\cplx$ then the set of all  holomorphic functions on $X$ is denoted by $\mco(X)$. Let $Y$ be a complex manifold. The main question here is: {\em For a given complex manifold $Z$, does there exists  a holomorphic map $f\colon Z \to Y$ such that $f(Z)$ is dense in $Y$?} In this case, we say that $f $ is a dense holomorphic map from $Z$ to $Y$. In \cite{W2005}, Winkelmann proved that if $X$ and $Y$ are irreducible complex spaces and $X$ admits a non-constant bounded holomorphic function then there exists a holomorphic map from $X$ to $Y$ with dense image (for the notions of complex manifold and complex space see \cite{Forstbook}). In \cite{FW2005}, Forstneri\v{c} and  Winkelmann  showed that if $X$ is a connected complex manifold then the set of all holomorphic maps $f\colon \D \to X$  with $\overline{f(\D)}=X$ is dense in $\mathcal{O}(\D, X)$ with respect to the compact open topology. In this paper we consider the holomorphic maps with value in $\hol(X,Y)$, where $X$ and $Y$ are complex manifolds.
Let $X, Y, Z$ be connected complex manifolds and $\mathcal{S} \st \hol(X,Y)$.  We say that a map $f\colon Z \to \mathcal{S}$ is holomorphic if the map $\hat{f}\colon Z  \times X\to Y$ defined by  $\hat{f}(z,x)= f(z)(x)$ is holomorphic. In this case, $\hat{f}$ is said to be associated holomorphic map for $f$. Following the terminology introduced by Kusakabe \cite{kus2017}, we say a subset $\mathcal{S} \st \mathcal{O}(X,Y)$ is $Z$-dominated if there exists a dense holomorphic map $f\colon Z \to \mathcal{S}$.
In \cite{kus2017}, Kusakabe proved the following result.
\begin{result}\cite[Theorem 1.1]{kus2017} 
\label{R:kusu1}
Let $\Om \Subset \cn$ be a bounded convex domain and $Y$ be a connected complex manifold. Then $\mco(\Om,Y)$ is $\mathbb{D}$-dominated.     
\end{result}

\noindent By demonstrating an example \cite[Example 2.2]{kus2017}, Kusakabe showed also  that \Cref{R:kusu1} is not true  in general for bounded pseudoconvex domain.
In this paper, we use \Cref{T:Polynostrict} to provide a class of pseudoconvex  domains $\Om$ in $\cn$ for which $\mco(\Om, Y)$ is $\mathbb{D}$-dominated. Our next theorem reads as 
\begin{theorem}\label{T:densemap1}
Let $\Om \Subset \cn$ be a bounded pseudoconvex domain containing the origin and is strictly spirallike with respect to  globally asymptotic stable vector field $V \in \mathfrak{X}_{\mathcal{O}}(\cn)$ and $Y$ be a connected complex manifold. Then $\mco(\Om,Y)$ is $\mathbb{D}$-dominated.     
\end{theorem}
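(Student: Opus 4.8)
The plan is to reduce the statement to Kusakabe's theorem (\Cref{R:kusu1}) by isolating the property of $\Om$ that actually drives its proof. First I would invoke \Cref{T:Polynostrict}: since $\Om$ is bounded pseudoconvex and strictly spirallike with respect to the complete globally asymptotically stable vector field $V$, its closure $\overline{\Om}$ is polynomially convex. Together with the Remark following \Cref{T:Polynostrict}, $\Om$ is also a Runge domain in $\cn$. These are precisely the two features of a bounded convex domain that enter the construction of a dense holomorphic curve; convexity is never used beyond them. Thus the goal becomes: construct a holomorphic map $\hat{f}\colon \D\times\Om\to Y$ whose slices $\{\hat{f}(z,\bcdot):z\in\D\}$ are dense in $\mco(\Om,Y)$, using only that $\overline{\Om}$ is polynomially convex and that $\Om$ is Runge.

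Since $\Om$ is a bounded domain of holomorphy and $Y$ is second countable, $\mco(\Om,Y)$ is separable in the compact-open topology; I would fix a dense sequence $(g_k)_{k\geq 1}$ and an exhaustion $\Om=\bigcup_j K_j$ by polynomially convex compacts. The latter is available because $\Om$ is Runge: for any compact $L\st\Om$ the polynomial hull $\widehat{L}$ again lies in $\Om$ and is polynomially convex, so hulls of a normal exhaustion furnish the $K_j$. Because a product of polynomially convex compacts is polynomially convex, each $\overline{r\D}\times K_j\st\cplx^{n+1}$ with $0<r<1$ is polynomially convex, whence Oka--Weil/Runge approximation is available on these sets for $\cplx^N$-valued holomorphic maps. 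I would then build $\hat{f}$ as a locally uniform limit of holomorphic maps $F_m\colon \D\times\Om\to Y$ together with points $z_m\in\D$ and radii $r_m\uparrow 1$, arranged so that (a) $F_m$ differs from $F_{m-1}$ by a controllably small amount on $\overline{r_{m-1}\D}\times K_{m-1}$, guaranteeing convergence, and (b) the slice $F_m(z_m,\bcdot)$ approximates $g_m$ to within $1/m$ on $K_m$. The limit $\hat{f}=\lim_m F_m$ is then holomorphic with dense slices, i.e.\ $\mco(\Om,Y)$ is $\D$-dominated.

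The heart of the matter is the passage from $F_{m-1}$ to $F_m$, and this is where the arbitrariness of $Y$ makes the argument delicate: $Y$ need not be an Oka manifold, so one cannot directly approximate $Y$-valued maps by a global Oka--Weil theorem. Following the Forstneri\v{c}--Winkelmann philosophy, I would connect the current slice value to the target $g_m$ along finitely many holomorphic discs in the connected manifold $Y$, working inside a chain of coordinate charts and performing the modification chart by chart. In each chart the problem becomes one of gluing $\cplx^N$-valued holomorphic maps on a neighbourhood of a polynomially convex set $\overline{r\D}\times K_j$, where the Runge property supplied by polynomial convexity applies; one splices the detour into a thin region of $\D$ near $z_m$ using a cutoff together with Runge approximation in the $\D$-variable, keeping the perturbation small on $\overline{r_{m-1}\D}\times K_{m-1}$.

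The main obstacle is to carry out these successive gluings so that each stays supported on a polynomially convex subset of $\cplx^{n+1}$, so that the Runge approximations are legitimate, while the accumulated errors remain summable and the final limit is genuinely holomorphic and attains density against every $g_k$. Polynomial convexity of $\overline{\Om}$, delivered by \Cref{T:Polynostrict}, is precisely the ingredient that legitimizes every approximation step in this inductive scheme, exactly as convexity does in the proof of \Cref{R:kusu1}; the Runge property of $\Om$ then ensures the global limit descends to a holomorphic map on all of $\D\times\Om$.
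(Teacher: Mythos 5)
Your opening move coincides with the paper's: \Cref{T:Polynostrict} gives polynomial convexity of $\OOm$, and the rest is meant to follow Kusakabe's scheme for \Cref{R:kusu1}. The top-level reduction is therefore right, but the inductive gluing step you describe contains a genuine gap, and it is exactly where all the work lies. For an arbitrary connected complex manifold $Y$ there is no Oka--Weil theorem: a $Y$-valued map defined near a polynomially convex set cannot in general be approximated by maps holomorphic on a larger prescribed domain, and ``a cutoff together with Runge approximation in the $\D$-variable'' is not available for manifold-valued maps (you cannot cut off, and you cannot interpolate between two maps into $Y$). The paper's substitute is the pair \Cref{L:kallin} and \Cref{R:for04}: in \Cref{L:polyapp} one attaches the totally real arc $I=[1,2]\times\{0\}$ to the set $(\overline{\D}\cup\{2\})\times\OOm$, checks via Kallin's lemma that the union is polynomially convex, prescribes on $I$ a continuous path in $Y$ joining the two given maps (this is where connectedness of $Y$ enters), and invokes Forstneri\v{c}'s Mergelyan-type theorem. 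Your ``chain of coordinate charts'' remark gestures at the proof of that theorem but does not supply the polynomially convex configuration on which it must be applied, nor the connecting arc without which the two prescribed slices cannot be joined by a single holomorphic map.

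Even granting the approximation, \Cref{R:for04} only yields maps holomorphic on shrinking \emph{neighborhoods} of the compact set, not on $\D\times\Om$; your scheme requires each $F_m$ to be globally defined on $\D\times\Om$, and nothing you propose achieves this (the Runge property of $\Om$ concerns scalar-valued functions and does not make a $Y$-valued limit ``descend''). The paper recovers globally defined maps by the Forstneri\v{c}--Winkelmann reparametrization trick (\Cref{L:l1}): precompose with holomorphic maps $\phi_j\colon\D\to D_j$ into the neighborhoods of $\overline{\D}\cup[1,2]$ satisfying $\phi_j\to \mathrm{id}$ and $2\in\phi_j(\D)$, so the distinguished slice at $2$ is pulled back into the disc. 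Relatedly, to feed \Cref{R:for04} one needs the given map holomorphic on a neighborhood of $\overline{\D}\times\OOm$; the paper obtains this from the strictly spirallike flow (precomposition with $\widetilde X_t$ in \Cref{P: dense1}), whereas your exhaustion by polynomial hulls $K_j$ uses only Runge-ness and would force you to re-attach a connecting arc at every stage. Finally, the paper avoids your error-summation bookkeeping by a Baire category argument, showing each set $\mathcal{W}_j=\{f\colon f(\D)\cap\mathcal{U}_j\neq\emptyset\}$ is open and dense in $\mco(\D,\mco(\Om,Y))$; that difference is cosmetic, but the missing approximation-and-reparametrization mechanism is not.
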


\subsection{Universal mappings and composition operators} We also apply \Cref{T:densemap1}  to obtain a result providing the existence of a universal mapping. In \cite{Brikhoof}, Birkhoff constructed for every sequence of real number $\{b_{k}\}_{k \in \mathbf{N}}$ with $\lim_{k \to \infty} b_{k}=\infty$, a holomorphic function  $F \in \mco(\cplx)$, with the property that if $F_{k} \colon \cplx \to \cplx$ defined by $F_{k}(z)=F(z+b_{k})$ then  $\{F_{k}| k \in \mathbb{N}\}$ is dense in $\mco(\cplx)$ with respect to the compact open topology.  Such a function is called $universal function $. Later, Seidel and Walsh \cite{Siedel1941} proved the result for the unit disc replacing the Euclidian translation with a non-Euclidian translation. In \cite[Theorem 5, 6]{Fernando99}, Fernando proved that any compactly divergent sequence of automorphisms of the open unit ball and the polydisc admits a universal function.
Existence of universal function has a very close connection with the hypercyclicity of a composition operator (See \cite{Erdman99} for a nice survey in this topic). 



   


Let  $T \colon X \to X$ be a self-map on  a topological vector space $X$,  and $(n_{k})_{k \in \mathbb{N}}$ is an  increasing sequence of  natural numbers. $T$ is said to be  \textit{hypercyclic with respect to}  $(n_{k})$ if  there exists $ x \in X$ such that $\{T^{n_{k}}(x)| k \in \mathbb{N}\}$ is dense in $X$, where  $T^{m}:=\underbrace{T \circ T \circ \cdots \circ T}_{\text{m times}}$ for every $m \in \mathbb{N}$ .

In \cite{Zajac16}, Zaj\c{a}c proved the following result: 
\begin{result}\label{R:ZZ}
    Let $\Om$ be a connected Stein manifold, and $\phi \in \mco(\Om, \Om)$ and let $(n_{k})_{k} \st \mathbb{N}$ be an increasing sequence. Then the composition operator $C_{\phi} \colon \mco(\Om) \to \mco(\Om)$ defined by $C_{\phi}(f)=f \circ \phi$ is hypercyclic  with respect to $(n)_{k}$ if and only if $\phi$ in injective and for every  compact $\mco(\Om)$ convex subset $K \st \Om$ there exists such that $K \cap \phi^{n_{k}}(K) \neq \emptyset$ and the set $K \cup \phi^{n_{k}}(K)$ is $\mco(\Om)$ convex.
\end{result}

Motivated by the properties of $\phi$ in the \Cref{R:ZZ}, Andrist and Wold \cite{AW2015}, defined $generlized ~~translation$ which is as follows : 
\begin{definition}
        Let $X$ be a Stein space and $\tau \in Aut(X)$. The automorphism $\tau$ is a generalized translation if for any compact $\mathcal{O}(X)$-convex subset $K \subset X$ there exists $j \in \mathbb{N}$ such that 
        \begin{itemize}
            \item [1.]
            $\tau^{j}(K) \cap K =\emptyset$, and
            \item [2.]
            $\tau^{j}(K) \cup K $ is $\mathcal{O}(X)$-convex.
        \end{itemize}
    \end{definition}

\noindent In \cite{AW2015}, it is proved that if $X$ is a stein manifold with density property (see \cite[Section-4.10]{Forstbook} for the notion of density property), and $\tau \in Aut(X)$ is a generalized translation, then there exists $F \in Aut_{0}(X)$ such that  the subgroup generated by $\tau $ and $F$ is dense in $Aut_{0}(X)$ in compact open topology, where $Aut_{0}(X)$ is path connected component of identity automorphism.
    
In \cite{kus2017}, Kusakabe first studied the hypercyclicity  of the composition from  the space $\mco(\Om, Y)$, where $Y$ is a connected complex manifold.

 
\begin{definition}
    Let $X$ be a complex manifold and $\tau \in Aut(X)$ and assume that $\mathcal{S} \subset \mco(X, Y)$ is a $\tau^{*}$ invariant subset, i.e. $\tau^{*} \mathcal{S} \subset \mathcal{ S}$, where $\tau^{*} \colon \mco(X,Y) \to \mco(X,Y)$ defined by $\tau^{*}(f)=f\circ \tau$. A holomorphic map $F \in \mathcal{S}$ is called an $\mathcal{S}$-universal map for $\tau$ if $\{F \circ \tau^{j}\}_{j}$ is dense in $\mathcal{S}$.
    \end{definition}
\noindent From the   above-mentioned result due to Brikhoff, for  every translation mapping $\tau(z)=z+a$, with $a \neq 0$, there exists an $\mco(\cplx)$-universal map for $\tau$. In \Cref{R:ZZ}, we had  the existence of universal map when $X=\Om$, a Stein manifold, and  $Y=\cplx$, and $\tau \in \mco(\Om, \Om) $ is generalized translation.  


 As an application of \Cref{R:kusu1}, Kusakabe proved that 
 \begin{result} \cite[Theorem 1.3]{kus2017}\label{R:kusu2}
Let $\Om \St \cn$  be a bounded convex domain, $\tau \in Aut(\Om)$ is a generalized translation, and $Y$ be a  connected complex manifold. Then there exists an $\mco(\Om, Y)$-universal map for $\tau$.  
 \end{result}
The Carath\'eodory pseudodistance is defined as follows. We will need this definition to discuss our next result.
 \begin{definition}
        Let $\Om \st \cn$ be a domain and $\mathbb{\rho}$ denotes the Poincar\'e distance in $\D$. The Carath\'eodory pseudodistance between $z,w \in \Om$ is denoted by $c_{\Om}(z,w)$ and is defined by $$c_{\Om}(z,w)=\sup\{\rho(f(z),f(w))|f \in \mco(\Om, \D)\}.$$
    \end{definition}
\noindent    A domain  $\Om$ is  said to be Carath\'eodory hyperbolic if $(\Om, c_{\Om})$ is a metric space. A $c_{\Om}$-ball centered at $x\in \Om$ and with radius $r>0$ is defined by $B_{c_{\Om}}(x,r)=\{z\in \Om| c_{\Om}(x,z)<r\}$. A Carath\'eodory hyperbolic domain $\Om$ is said to be $c_{\Om}$-finitely compact if all  $c_{\Om}$-balls  with center in $\Om$ and finite radius,  is  relatively compact in $\Om$ with respect to the usual topology of $\Om$.

\noindent As an application of \Cref{T:densemap1}, we are also able to extend \Cref{R:kusu2}, for bounded  pseudoconvex domains which are strictly spirallike with respect to a globally asymptotic stable vector field and $c$-finitely compact. More precisely, our next theorem is 

\begin{theorem}\label{T:densemap2}
 Let $V \in \mathfrak{X}_{\mathcal{O}}(\cn)$ be a complete globally asymptotic stable vector field. Let $\Om \Subset \cn$ be a bounded  pseudoconvex domain containing the origin. Suppose that $\Om$ is $c_{\Om}$-finitely compact and strictly spirallike domain with respect to the vector field $V$. Then for any generalized translation $\tau \in Aut(\Om)$ and any connected complex manifold $Y$, there exists an $\mco(\Om, Y)$-universal map for $\tau$. In particular, the conclusion of the theorem is true if $\Om$ is a strongly pseudoconvex domain with $\smoo^{2}$ boundary and spirallike with respect to the vector field $V$.
 \end{theorem}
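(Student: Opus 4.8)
The plan is to deduce \Cref{T:densemap2} from \Cref{T:densemap1} by adapting Kusakabe's argument (the proof of \Cref{R:kusu2}) to our setting, where the only structural input Kusakabe uses from convexity is precisely the conclusion of \Cref{T:densemap1}, namely that $\mco(\Om, Y)$ is $\D$-dominated. The hypercyclicity criterion we are aiming for is really a density statement: we must produce a single $F \in \mco(\Om, Y)$ such that the orbit $\{F \circ \tau^{j}\}_{j \geq 0}$ is dense in $\mco(\Om, Y)$ under the compact-open topology. The two hypotheses beyond those of \Cref{T:densemap1} --- that $\tau$ is a generalized translation and that $\Om$ is $c_{\Om}$-finitely compact --- are exactly what is needed to run a Baire-category/transitivity argument on the separable Fr\'echet space (or Fr\'echet manifold of mappings) $\mco(\Om, Y)$.

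First I would record the topological setup: since $\Om$ is $c_{\Om}$-finitely compact and Carath\'eodory hyperbolic, the $c_{\Om}$-balls $B_{c_{\Om}}(0, r)$ form an exhaustion of $\Om$ by relatively compact, $\mco(\Om)$-convex (indeed holomorphically convex) sets, and $\mco(\Om, Y)$ is a separable metrizable space on which the composition operator $\tau^{*}(f) = f \circ \tau$ acts continuously. Next I would invoke the Birkhoff transitivity mechanism: to obtain a universal (hypercyclic) map it suffices to verify topological transitivity of $\tau^{*}$, i.e. that for every pair of nonempty basic open sets $\mathcal{U}, \mathcal{V} \subset \mco(\Om, Y)$ there is some $j$ with $(\tau^{*})^{j}(\mathcal{U}) \cap \mathcal{V} \neq \emptyset$. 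A basic open set is determined by prescribing a holomorphic map approximately on a compact set; here is where the generalized-translation hypothesis enters, guaranteeing that for any compact $\mco(\Om)$-convex $K$ there is $j$ with $\tau^{j}(K) \cap K = \emptyset$ and $K \cup \tau^{j}(K)$ again $\mco(\Om)$-convex.

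The key step, and the place where \Cref{T:densemap1} does the heavy lifting, is the following merging/approximation argument. Given two target maps $g_{1}, g_{2} \in \mco(\Om, Y)$ prescribed respectively near $K$ and near $\tau^{j}(K)$, I would use that $K \sqcup \tau^{j}(K)$ is $\mco(\Om)$-convex to find a single holomorphic map on a neighborhood of this disjoint union approximating $g_{1}$ on $K$ and $g_{2}\circ\tau^{-j}$ on $\tau^{j}(K)$; then a Runge-type/Oka-principle approximation --- legitimate because $\Om$ is $\D$-dominated and hence the relevant manifold-valued approximation holds --- lets me approximate this globally by some $F \in \mco(\Om, Y)$. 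For such $F$ one has $F \approx g_{1}$ on $K$ while $F \circ \tau^{j} \approx g_{2}$ on $K$, giving a point of $\mathcal{U}$ whose image under $(\tau^{*})^{j}$ lies in $\mathcal{V}$. The manifold-valued approximation on $K \cup \tau^{j}(K)$ is exactly the content supplied by the $\D$-domination of $\mco(\Om, Y)$ from \Cref{T:densemap1}, combined with $c_{\Om}$-finite compactness to ensure the compact sets $K$ can be taken $\mco(\Om)$-convex and exhausting. Assembling transitivity over all basic pairs $(\mathcal{U}, \mathcal{V})$ and applying the Baire category theorem then yields a dense $G_{\delta}$ of universal maps, in particular a nonempty one, which is the desired $\mco(\Om, Y)$-universal map for $\tau$.

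I expect the main obstacle to be the merging step for manifold-valued maps: unlike the scalar case in \Cref{R:ZZ}, here $Y$ is an arbitrary connected complex manifold, so one cannot simply add an interpolating polynomial, and the disjoint patches $g_{1}$ on $K$ and $g_{2}$ on $\tau^{j}(K)$ must be fused into one honest holomorphic map into $Y$ and then globally approximated. This is precisely where the polynomial convexity furnished by \Cref{T:Polynostrict} (through \Cref{T:densemap1}) is indispensable, since it converts the $\mco(\Om)$-convexity of $K \cup \tau^{j}(K)$ into the approximation property for $Y$-valued maps. For the final sentence of the theorem I would simply note that a bounded strongly pseudoconvex domain with $\smoo^{2}$ boundary is automatically $c_{\Om}$-finitely compact (strong pseudoconvexity forces completeness of the Carath\'eodory metric near the boundary), so the general statement applies.
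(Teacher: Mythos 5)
Your overall architecture (Birkhoff transitivity for $\tau^{*}$, fed by the generalized-translation hypothesis and $\D$-domination) is in the right spirit, but the step you yourself flag as the main obstacle --- merging two prescribed $Y$-valued maps, $g_{1}$ near $K$ and $g_{2}\circ\tau^{-j}$ near $\tau^{j}(K)$, into a single map holomorphic on all of $\Om$ --- is a genuine gap, not a technicality. $\D$-domination of $\mco(\Om,Y)$ asserts the existence of one dense holomorphic disc in the mapping space; it does not provide an Oka--Weil or Mergelyan theorem for $Y$-valued maps on $\mco(\Om)$-convex compacta, and for an arbitrary connected complex manifold $Y$ no such global approximation theorem holds (this is exactly why \Cref{R:ZZ} is confined to the scalar case $Y=\cplx$). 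Even \Cref{L:polyapp}, via Forstneri\v{c}'s \Cref{R:for04}, only produces approximants on shrinking \emph{neighborhoods} of the compact set, never maps defined on all of $\Om$; passing from a neighborhood of a compactum to the whole domain is precisely what fails for a general target, so the sentence ``a Runge-type/Oka-principle approximation \dots lets me approximate this globally by some $F\in\mco(\Om,Y)$'' has no justification.

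The paper avoids this entirely by a different mechanism, following Kusakabe: take the dense disc $f\colon\D\to\mco(\Om,Y)$ furnished by \Cref{T:densemap1}, form the push-forward $\widehat{f}_{*}\colon\mco(\Om,\D\times\Om)\to\mco(\Om,Y)$, $\widehat{f}_{*}(g)=\widehat{f}\circ g$, and check that it has dense image using the constant sections $x\mapsto(z_{m},x)$. Universality of $\tau$ is then proved in $\mco(\Om,\D\times\Om)$ --- this is \Cref{L:GT2}, where the target is a taut, strictly spirallike \emph{domain}, so the interpolation between two prescribed maps is achieved with the holomorphic family through the projections $\pi_{1},\pi_{2}$ of $\Om\times\Om$ together with the Carath\'eodory geometry, and no gluing of $Y$-valued maps is ever needed --- and the universal element is transported through the continuous, dense-image map $\widehat{f}_{*}$. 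To rescue your direct approach you would have to either assume $Y$ is an Oka manifold or rediscover this factorization; as written, the transitivity verification for $\mco(\Om,Y)$ does not go through. (Your closing remark on the ``in particular'' clause, that strong pseudoconvexity with $\smoo^{2}$ boundary forces $c_{\Om}$-finite compactness, is correct and consistent with the paper.)
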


 \begin{remark}
      \Cref{T:densemap2} and \Cref{R:kusu1} can be interpreted as the composition operator $C_\tau\colon\hol(\Om,Y)\to \hol(\Om, Y)$
     is hypercyclic for corresponding domains in those results. 
 \end{remark}

\noindent Our theorems  extend Kusakabe's results. For example, $\Omega=\{(z_1,z_2)\in \mathbb{C}^{2}\;|\;|z_{1}|<5,~|z_2|<e^{-|z_{1}|})\}$ is a bounded pseudoconvex domain satisfying the hypothesis of \Cref{T:densemap1}, \Cref{T:densemap2}. The domain $\Om$ is not convex (See \Cref{E:ex2}).

 It seems  from the above discussion that it is very important to know which are the generalized translation  on a given domain $\Om$. In this regard, we state our next result:
\begin{theorem}\label{T:densemap3}
    Let $\Om \St \cn$ be a bounded strongly pseudoconvex domain with $\smoo^{3}$ boundary containing the origin and strictly spirallike with respect to complete globally asymptotic stable vector field $V \in \mathfrak{X}_{\mathcal{O}}(\mathbb{\cplx}^{n})$. Then for an automorphism $\tau \in Aut(\Om)$ the following are equivalent.
    \begin{itemize}
        \item [1.]
        $\tau$ is a generalized translation.
          \item [2.]
          $\tau$ has no fixed point.
          \item [3.]
          $\{\tau^{j}\}_{j \in \mathbb{N}}$ is compactly divergent.
    \end{itemize}
\end{theorem}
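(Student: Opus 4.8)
The plan is to prove the cycle of implications $(1)\Rightarrow(2)\Rightarrow(3)\Rightarrow(1)$, with the polynomial convexity of $\overline{\Om}$ as the standing hypothesis underpinning everything. That polynomial convexity is immediate from \Cref{T:Polynostrict} applied with $\psi=\mathrm{id}$, since $\Om$ is a bounded pseudoconvex domain that is strictly spirallike with respect to the complete globally asymptotic stable vector field $V$; moreover, as recorded in the remark following \Cref{T:Polynostrict}, $\Om$ is Runge. Two standard consequences of this set-up will be used repeatedly. First, since $\Om$ is Runge, for every compact $L\subset\Om$ one has $\widehat{L}_{\mathcal{O}(\Om)}=\widehat{L}\cap\Om$, where $\widehat{L}$ is the polynomial hull. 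Second, because $\overline{\Om}$ is polynomially convex and $\bdy\Om$ is strongly pseudoconvex of class $\smoo^{3}$, every boundary point of $\Om$ is a peak point for the uniform algebra generated by polynomials on $\overline{\Om}$: strong pseudoconvexity yields a local holomorphic peak function, which is globalized to a polynomial peak function via polynomial convexity and the Oka--Weil theorem. In particular, if $K\subset\Om$ is compact and $\mathcal{O}(\Om)$-convex, then no boundary point can lie in $\widehat{K}$, so $\widehat{K}\subset\Om$ and hence $K$ is polynomially convex.

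For $(1)\Rightarrow(2)$: if $\tau$ had a fixed point $x_{0}$, then the $\mathcal{O}(\Om)$-convex compact set $K=\{x_{0}\}$ would satisfy $\tau^{j}(K)=K$ for every $j$, so $\tau^{j}(K)\cap K\neq\emptyset$ always, contradicting the definition of a generalized translation. For $(2)\Rightarrow(3)$: a bounded strongly pseudoconvex domain with $\smoo^{2}$ boundary is taut, and by Abate's Wolff--Denjoy theorem a fixed-point-free $\tau\in\operatorname{Aut}(\Om)$ has the property that the iterates $\tau^{j}$ converge, uniformly on compact subsets of $\Om$, to a constant map with value a single boundary point $p_{0}\in\bdy\Om$; in particular $\{\tau^{j}\}$ is compactly divergent. (Equivalently, were $\{\tau^{j}\}$ not compactly divergent, tautness would produce a subsequential holomorphic limit $\Om\to\Om$ and hence a relatively compact subgroup of $\operatorname{Aut}(\Om)$, which by Cartan's fixed-point theorem for bounded domains would force a fixed point of $\tau$.) The implication $(3)\Rightarrow(2)$ is trivial, since a fixed point again obstructs compact divergence.

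The main work is $(3)\Rightarrow(1)$. Fix a compact $\mathcal{O}(\Om)$-convex set $K\subset\Om$; by the first paragraph $K$ is polynomially convex. Since $\tau$ is fixed-point-free under $(3)$, the Wolff--Denjoy theorem gives $\tau^{j}\to p_{0}\in\bdy\Om$ uniformly on $K$, so for all large $j$ the set $\tau^{j}(K)$ lies in an arbitrarily small ball about $p_{0}$ and is disjoint from $K$; note also that $\tau^{j}(K)$ is $\mathcal{O}(\Om)$-convex, hence polynomially convex, because $\tau\in\operatorname{Aut}(\Om)$ preserves $\mathcal{O}(\Om)$. As $K$ is polynomially convex and $p_{0}\notin K$, the set $K\cup\{p_{0}\}$ is polynomially convex, and the locally constant function equal to $-1$ near $K$ and $+1$ near $p_{0}$ is holomorphic on a neighbourhood of it; by the Oka--Weil theorem there is a polynomial $P$ with $|P+1|$ small on $K$ and $|P-1|$ small near $p_{0}$. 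For $j$ large enough, $P(K)$ then lies in a tiny disc about $-1$ and $P(\tau^{j}(K))$ in a tiny disc about $+1$, so these two images sit inside two closed sectors with vertex at the origin that meet only at $0$, are each polynomially convex in $\cplx$, and avoid $0$. Kallin's lemma then yields that $K\cup\tau^{j}(K)$ is polynomially convex, and since it is contained in $\Om$ the Runge property upgrades this to $\mathcal{O}(\Om)$-convexity. Together with $\tau^{j}(K)\cap K=\emptyset$, this is exactly the defining property of a generalized translation, so $\tau$ is a generalized translation.

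I expect the principal difficulty to reside in this last step: producing, for two compact sets one of which is concentrated near a boundary peak point, a single polynomial that places them into disjoint sectors so that Kallin's lemma applies. This is precisely where the polynomial convexity of $\overline{\Om}$ from \Cref{T:Polynostrict}, the peak-point property of strongly pseudoconvex boundary points, and the Wolff--Denjoy concentration at $p_{0}$ must be combined; the remaining ingredients (tautness, the trivial implications) are routine.
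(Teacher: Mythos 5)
Your proposal is essentially correct, but it takes a genuinely different route for the substantive implication $(3)\Rightarrow(1)$. The paper disposes of that step by citation: it notes that $\Om$ is $c_{\Om}$-finitely compact, invokes Zaj\c{a}c's characterization of hypercyclic composition operators on $\mco(\Om)$ to conclude that $C_{\tau}$ is hypercyclic, and then reads off from Zaj\c{a}c's results that for every compact $\mathcal{O}(\Om)$-convex $K$ some $K\cup\tau^{j}(K)$ is $\mathcal{O}(\Om)$-convex. You instead argue directly with polynomial hulls: polynomial convexity of $\OOm$ from \Cref{T:Polynostrict}, the peak-point property at strongly pseudoconvex boundary points, the Runge property to identify $\mathcal{O}(\Om)$-hulls with polynomial hulls inside $\Om$, and Kallin's lemma once $\tau^{j}(K)$ has been concentrated near the Wolff--Denjoy boundary point. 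Your version is more self-contained and closer to the polynomial-convexity theme of the paper; the paper's is shorter but imports the finite compactness of the Carath\'eodory metric and the operator-theoretic machinery. For $(2)\Leftrightarrow(3)$ the paper cites Huang's theorem, which is where the two hypotheses you never comment on enter: topological contractibility of $\Om$ (extracted from spirallikeness) and the $\smoo^{3}$ boundary.

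One caveat needs fixing. Both your main line for $(2)\Rightarrow(3)$ and the concentration step in $(3)\Rightarrow(1)$ rest on a Wolff--Denjoy theorem, which you attribute to Abate for arbitrary strongly pseudoconvex domains with $\smoo^{2}$ boundary and no topological hypothesis. The statement you need is true for the domains at hand, but the appropriate reference is Huang's theorem (the one the paper uses), whose hypotheses are contractibility and $\smoo^{3}$ regularity --- both available here, the first because a domain spirallike with respect to an asymptotically stable vector field is contractible. Your parenthetical fallback via ``Cartan's fixed-point theorem for bounded domains'' is not correct as stated: a compact group of automorphisms of a bounded domain need not have a fixed point absent a topological assumption (the antipodal map on a spherical shell already fails), so contractibility cannot be dispensed with there either. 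With the citation repaired and the role of contractibility made explicit, your proof stands.
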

For bounded convex domain this result was proved in \cite[Proposition 4.3]{kus2017}. We extend that to a certain class of strongly pseudoconvex domains.

\begin{remark}
   If $V \in \mathfrak{X}_{\mathcal{O}}(\cn)$  is a globally asymptotic stable vector field on $\cn$, with a non-zero equilibrium point and  the given domain is spirallike with respect to the vector field $V \in \mathfrak{X}_{\mathcal{O}}(\cn)$ containing the equilibrium point of the vector field $V$, then  \Cref{T:Polynostrict}, \Cref{T:polystrict1}, \Cref{T:densemap1}, \Cref{T:densemap2}, \Cref{T:densemap3} are also true.  
\end{remark}

	\section{Preliminaries and technical results}\label{S:pre}
	

 For any subset $A \subset \cn$, we denote $B(A,r):=\{z \in \cn: \text{dist}(z, A)<r \}$, where $\text{dist}(z, A):=\inf_{a \in A}\|z-a\|$. For a $\smoo^{2}$ function $\rho\colon \re^{n} \to \mathbb{R}$ we denote $\nabla \rho(x)=\big(\frac{\partial \rho(x)}{\partial x_{1}},\frac{\partial \rho(x)}{\partial x_{2}}, \cdots, \frac{\partial \rho(x)}{\partial x_{n}}\big)$ and Hessian matrix of the function $\rho$ at the point $x \in \re^{n}$ is denoted by $H(\rho)(x)$. 
	
  For a real number $k >0$ and  $D \subset \cn$ we say that a mapping  $f \in C^{k}(D)$ if $f$ has continuous partial derivatives of order $[k]$ on $D$ and the partial derivatives of
	order $[k]$ are H$\ddot{o}$lder continuous with exponent $k-[k]$ on $D$. 
 
 For any two complex manifolds $X, Y$, the space $\mco(X, Y)$ equipped with compact open topology, forms a second countable and completely metrizable space. In particular, it is separable Baire space (see \cite[Remark 1.1]{kus2017}).

Let $M, N$ be two  smooth manifolds and $r \in \mathbb{N}$. Suppose  that  $\smoo^{r}(M,N)$ denotes the  set of all $r$ times differentiable map. 
Let $f \in \smoo^{r}(M,N)$. Suppose that $(U, \phi), (V, \psi)$ be two charts on $M, N$ respectively. Let $K \st U$ be a compact subset such that $F(K) \st V$. For $\eps >0$ we denote  a weak subbasic neighborhood of $f$ by $\mathcal{N}^{r}(f, (U, \phi), (V, \psi), K , \eps)$ and it is defined by $\{g \in \smoo^{r}(M,N)|g(K) \st V, \sup_{x \in K}\|D^{k}(\psi f \phi^{-1}(x))-D^{k}(\psi g \phi^{-1}(x))\|<\eps, k \in \{0,1,2, \cdots ,r\}\}$. The weak topology generated by  sets $\mathcal{N}^{r}(f, (U, \phi), (V, \psi), K , \eps)$ is called compact open $C^{r}$ topology and it is  denoted  by $\smoo^{r}_{W}(M, N)$.
The next result is well known (see \cite[Exercise 7]{hirschbook}). We will use it for proving \Cref{P:global Narshiman}.

\begin{result}\label{R:hrsh exr}
		A $\smoo^{1}$ immersion $f\colon M \to N$, which is injective on a closed set $K \subset M$, is injective on a neighborhood of $K$. Moreover, $f$ has a neighborhood $\mathcal{N} \subset \smoo^{1}_{S}(M,N)$ and $K$ has a neighborhood $U$ of $M$ such that every $g \in \mathcal{N}$ is injective on $U$. If $K$ is compact then $\mathcal{N}$  can be taken in $\smoo^{1}_{W}(M, N)$.
	\end{result}

	We need the following result for proving \Cref{P:global Narshiman}.
	\begin{result}\cite[Lemma]{Krantzbook}\label{R:strongconvex}
		Let $\Om \subset{R}^{n}$ be a strongly convex domain. Then there exist a constant $c>0$ and a defining function $\tilde{\rho}$ for $\Om$ such that 
		\begin{align}\label{E:strongly1}
			\sum_{j,k =1}^{n} \frac{\partial ^2 \tilde{\rho}}{\partial x_{j} \partial x_{k}}(P)w_{j}w_{k} \geq c \|w\|^2,
		\end{align} 
		$ \forall P \in \partial \Omega$ and $\forall w \in \mathbb{R}^{n}$.
	\end{result}

We need the following notion for proving \Cref{T:Polynostrict} (see \cite[Page-76]{Rangebook} for detail). 
 \begin{definition} 
     A compact subset $K \st \cn$ is said to be Stein compactum if for every neighborhood $U$ of $K$ there exists a domain of holomorphy $V_{U} \st \cn$ such that $K \subset V_{U} \subset U$.
 \end{definition}

\noindent For any compact subset $K$ in $\cn$, $\mathcal{P}(K)$ denotes the set of all continuous functions that can be approximated by holomorphic polynomials uniformly on $K$ and $\hol(K)$ denotes the set of all continuous functions on $K$ that can be approximated uniformly on $K$ by holomorphic functions in a neighborhood of $K$.  The following result from Range's book will be   used in our proof of  \Cref{T:Polynostrict}.
 \begin{result}\cite[Theorem-1.8]{Rangebook}\label{R:steincompacta}
	Let $K \st \cn$ be a Stein compactum and assume that $\mathcal{O}(K) \subset \mathcal{P}(K)$. Then $K$ is polynomially convex.
\end{result}

Recall that a domain $U \subset \cn $ is said to be Runge if every $f \in \mathcal{O}(U)$ can be approximated by holomorphic polynomials. Next, we state a result that will be used to prove \Cref{T:Polynostrict}
 \begin{result}\cite[Theorem 1.5]{CG}\label{R:SG}
    Let $F \in \mathfrak{X}_{\mathcal{O}}{(\mathbb{C}^{n})}$ be a complete globally asymptotically stable vector field.  If $\Om \ni 0$ is a spirallike domain with respect to $F$ containing the origin then $\Omega$ is a Runge domain.
	
 \end{result}

 The following result is 
from \cite[Theorem 3.2]{For2004}, related to holomorphic approximation. We  will use this in the proof of \Cref{L:polyapp}.

 \begin{result}[Forstneri\v{c}, \cite{For2004}]\label{R:for04}
    Let $K_{0}$ and $S= K_{0}\cup  M$ be compact holomorphically convex subsets in a complex manifold $X$ such that $M=S\setminus K_{0}$ is  a totally real $m$ dimensional submanifold of class $\smoo^{r}$. Assume that $r \geq \frac{m}{2}+1$
and let $k$ be an integer satisfying $0 \leq k \leq r-\frac{m}{2}-1$. Given an open set $U \st X$ containing $K_{0}$ and a map $f \colon U \cup M \to Y$ to a complex manifold $Y$ such that 
$f\big|_{U}$ is holomorphic $f\big|_{M} \in \smoo^{k}(M)$, there exist open sets 
$V_{j} \st X$ containing $S$ and holomorphic map $f_{j}\colon V_{j} \to Y$ $(j=1,2,, \cdots )$ such that, as $j \to \infty$, the sequence $f_{j}$ converges to $f$ uniformly on $K_{0} $ and
in the $\smoo^{k}$-sense on M. If in addition, $X_{0}$ is a closed complex subvariety of $X$ which does not intersect $M$ and $s \in \mathbb{N}$ then we can choose the approximating sequence such that $f_{j}$ agrees to order $s$ with $f$ along $X_{0} \cap V_{j}$ for
all $j = 1, 2, 3 \cdots $.
   \end{result}

The next result from \cite[Theorem 1.16]{Erdbook}, gives a necessary and sufficient condition for being a  continuous map hypercyclic. We will use this result to prove \Cref{L:GT2}.  

\begin{result}[Birkhoff, \cite{Erdbook}]\label{R:brik1}
  Let $T\colon X \to X $ be a continuous
map on a separable complete metric space $X$ without isolated points. Then
the following assertions are equivalent:
\begin{itemize}
    \item [i.]
 For any pair $U, V $ of  nonempty open subsets of $X$, there exists
some $ n \geq  0$ such that $T^{n}(U) \cap V \neq \emptyset$
\item [ii.]
there exists some $x \in X$ such that $\{T^{m}(x)|m \in \mathbb{N}\}$ is dense $X$.
\end{itemize}

\end{result}

The next  result from \cite[Theorem 3]{Pinchuk}, is related to the diffeomorphic  extension of a biholomorphism defined on a strongly pseudoconvex domain. It will be used in the proof of \Cref{P:global Narshiman}. 
	\begin{result}[Hurumov, \cite{Pinchuk}]\label{R: pinchuk}
		Let $D_{1}, D_{2} \Subset \cn$ be strongly pseudoconvex domain with the boundaries of class $C^{m}(m \geq 2)$, and 	$f: D_{1} \to D_{2}$ is a biholomorphism or proper holomorphic mapping. Then $f \in C^{m-\frac{1}{2}}(\overline{D_{1}})$, if $m-\frac{1}{2}$ is not an integer and $f \in C^{m-\frac{1}{2} -\eps}(\overline{D_{1}})$, for arbitrarily small $\epsilon>0$ if $m-\frac{1}{2}$ is  an integer. 
	\end{result}
	The next  result from   \cite[Theorem 24]{Foarnaess}, is a  Mergelyan type approximation on a strongly pseudoconvex domain. It  will  be used  in the proof of \Cref{P:global Narshiman}.
	\begin{result}\cite[Theorem 24]{Foarnaess}\label{R: foarnaess}
		Let $X$ be a Stein manifold and $\Om \Subset X$ be a strongly pseudoconvex domain of class $C^{k}$ for $k \geq 2$. Then for any $f \in C^{k}(\overline{\Om})\cap \mathcal{O}(\Om)$ , $k \geq 2$ there exists a sequence of function $f_{m} \in \mathcal{O}(\overline{\Om})$ such that $\lim_{m \to \infty}\|f_{m}-f\|_{C^{k}(\overline{\Om})}=0$. 
	\end{result}
 
The following  result  can be proved using \cite[Theorem 1.6.9]{Stoutbook}. We will use it in the proof of \Cref{L:polyapp}.

\begin{result}\label{L:kallin}
Let $K$ be a compact polynomially convex subset of $\cn$ containing the origin. Then $((\overline{\mathbb{D}} \cup \{2\}) \times K)\cup [1,2] \times \{0\} \subset \mathbb{C}^{1+n}$ is also polynomially convex. 
\end{result}

The next three  lemmas are the main ingredients of the proof of \Cref{P:global Narshiman} and \Cref{T:loewner}.

	\begin{lemma}\label{L:injective}
		Let $\Om \Subset \cn$ be a domain and  $f \in \smoo^{1}(\OOm , \cn)$. Suppose that  $f$ is  injective on $\OOm$ and $Df(z)$ is invertible for all $z \in \partial \Om$. Then there exists a neighborhood $U$ of $\overline{\Om}$ , such that $f$ is injective  on $U$. 
	\end{lemma}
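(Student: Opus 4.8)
The plan is to deduce the statement from the already-quoted \Cref{R:hrsh exr}, which is precisely the kind of openness-of-injectivity result we need. The hypotheses of \Cref{L:injective} are designed to match that result: $f$ is of class $\smoo^1$ on the compact set $\overline{\Om}$, it is injective there, and $Df(z)$ is invertible on $\partial\Om$. The first thing I would check is that $f$ is in fact an immersion on a neighborhood of the \emph{closed} set $\overline{\Om}$, not merely on the boundary. Invertibility of $Df$ is an open condition (the determinant is continuous and nonzero on the compact set $\partial\Om$), so $Df$ remains invertible on some neighborhood of $\partial\Om$; combined with invertibility at interior points, which I would need to argue separately, this would give an immersion on a neighborhood of $\overline{\Om}$.

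Here is the subtlety I expect to be the main obstacle: the statement as written only assumes $Df$ invertible on $\partial\Om$, not on all of $\overline{\Om}$. However, since $f$ is injective and holomorphic-like smooth on $\OOm$, one does \emph{not} automatically get that $Df$ is invertible in the interior for a general $\smoo^1$ map. I suspect the intended reading (consistent with how the lemma is used to prove \Cref{P:global Narshiman}) is that $f$ is a biholomorphism onto its image in the interior, so $Df$ is automatically invertible on $\Om$; the boundary hypothesis is the only extra input needed. Under that reading, $f$ is an immersion on all of $\overline{\Om}$. If instead $f$ is merely a $\smoo^1$ injective immersion-on-the-boundary, I would localize: apply \Cref{R:hrsh exr} to the restriction of $f$ near $\partial\Om$ to get a neighborhood of $\partial\Om$ on which $f$ is injective, and handle the interior by compactness. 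I would make the immersion status explicit before invoking \Cref{R:hrsh exr}.

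Granting that $f$ is a $\smoo^1$ immersion on a neighborhood $W$ of the compact set $K:=\overline{\Om}$ and injective on $K$, the result is almost immediate. I would apply \Cref{R:hrsh exr} with $M=W$ (or an open manifold neighborhood), $N=\cn$, and the compact set $K=\overline{\Om}$: since $f$ is an injective immersion on the compact set $K$, the result furnishes a neighborhood $U$ of $K$ in $M$ on which $f$ is injective. This $U$ is the desired neighborhood of $\overline{\Om}$, and shrinking it if necessary keeps it inside $W$ so that $f$ is defined and injective there. This completes the argument.

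The one genuine verification I would not skip is the passage from ``$Df$ invertible on $\partial\Om$'' to ``$f$ is an immersion on a neighborhood of $\overline{\Om}$,'' since \Cref{R:hrsh exr} requires the immersion property on the ambient manifold, not only injectivity. I would phrase the proof so that this point is settled first (using the interior biholomorphicity plus the open condition on the boundary), and only then cite \Cref{R:hrsh exr} to conclude. The remaining steps are formal applications of the quoted result, so the total argument should be short.
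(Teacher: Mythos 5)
You have correctly isolated the main obstacle --- \Cref{R:hrsh exr} requires $f$ to be an immersion on the ambient manifold, while the lemma only grants invertibility of $Df$ on $\partial \Om$ --- but neither of your two ways around it closes the gap. The first (read the lemma as applying only to maps that are biholomorphic in the interior, so that $Df$ is invertible on all of $\overline{\Om}$) proves a weaker statement than the one asserted: the lemma is stated for general $\smoo^{1}$ maps, and injectivity of a $\smoo^{1}$ map on $\Om$ does not force $Df$ to be invertible there (already $x \mapsto x^{3}$ on $\re$ fails this). The second (apply \Cref{R:hrsh exr} only near $\partial\Om$ and ``handle the interior by compactness'') leaves the decisive case untreated: a coincidence $f(x)=f(y)$ with $x$ deep inside $\Om$ and $y$ outside $\overline{\Om}$ but close to $\partial\Om$. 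Such a pair is excluded neither by injectivity on $\overline{\Om}$ (since $y \notin \overline{\Om}$) nor by injectivity on a small neighborhood of $\partial\Om$ (since $x$ is far from $\partial\Om$); ruling it out needs an extra separation argument (e.g.\ that $f$ of the compact set $\overline{\Om}$ minus a boundary collar stays a positive distance away from $f(\partial\Om)$, hence away from $f$ of a thin enough outer collar), and that argument is precisely where the content of the lemma lies. As written, the proposal asserts this step rather than proving it.

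For comparison, the paper does not use \Cref{R:hrsh exr} here at all; it argues directly by contradiction. If $f$ failed to be injective on every $B(\overline{\Om},\tfrac{1}{m})$, choose $z_{m}\neq w_{m}$ there with $f(z_{m})=f(w_{m})$, pass to subsequences converging to $z_{0},w_{0}\in\overline{\Om}$, conclude $z_{0}=w_{0}$ from injectivity on $\overline{\Om}$, and contradict local injectivity at $z_{0}$ --- supplied by the inverse function theorem when $z_{0}\in\partial\Om$ (this is the only place the boundary hypothesis on $Df$ is used), and by injectivity of $f$ on the open set $\Om$ when $z_{0}$ is an interior point, since then $z_{m},w_{m}$ eventually lie in $\Om$. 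This sequential argument is exactly the ``compactness'' step your sketch defers; your fallback route can be completed by inserting it, but without it the proof is incomplete.
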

	\begin{proof}
	  	Let $f \colon \overline{\Om} \to \cn$ be an injective $\smoo^{1}$ map.  Since $f$ is $\smoo^{1}$ on closed set, hence it is $\smoo^{1}$ on a neighborhood of $\overline{\Om}$. Assume that $f$ is not injective on any neighborhood of $\overline{\Om}$. Therefore, there exists $N \in \mathbb{N}$  and $z_{m}, w_{m} \in B(\overline{\Om}, \frac{1}{m})$,  such that $z_{m} \neq w_{m}$ and $f(z_{m})=f(w_{m})$,  for  $m >N$. Clearly,  $\{z_{m}\}$ and $\{w_{m}\}$ are two bounded sequence. Hence, passing to subsequence we can assume that  there exist  $z_{0}, w_{0} \in \overline{\Om}$ such that $z_{m} \to z_{0}$ and $w_{m} \to w_{0} $, as $m \to \infty$.
	 
	  Clearly, $\lim_{m \to \infty}f(z_{m})=\lim_{m \to \infty}f(w_{m})$. Hence, $f(z_{0})=f(w_{0})$. Since $f$ is injective on $\overline{\Om}$, hence we have $z_{0}=w_{0}$. Now from the inverse function theorem, it follows that $f$ is a local diffeomorphism at $z_{0}$. Since  every neighborhood of $z_{0}$, contains two distinct points $z_{m}$ and $w_{m}$  such that $f(z_{m})=f(w_{m})$, hence $f$ can  not  be a locally injective map  at $z_{0}$. Therefore, we get a contradiction. This proves the lemma.
		
			\end{proof}

	\begin{lemma}\label{L:L1}
		Let $\Om \subset \cn$ be a bounded domain and $U$ be a  Runge domain in $\cn$ containing $\overline{\Om}$. If $h: U \to h(U)$ is a biholomorphism such that $h(U)$ is convex and $h(\Om)$ is strongly convex with $\smoo^{2}$ boundary then  there exists $\Psi \in Aut(\cn)$ such that  $\Psi(\Om)$ is a strongly convex domain.  	
	\end{lemma}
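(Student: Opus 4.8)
The plan is to manufacture an automorphism $\Psi \in \mathrm{Aut}(\cn)$ that approximates $h$ in the $\smoo^2$-sense on $\overline{\Om}$, and then to argue that $\Psi(\Om)$ inherits strong convexity from $h(\Om)$. The guiding principle is that strong convexity is an \emph{open} condition in the $\smoo^2$-topology: by \Cref{R:strongconvex} the strongly convex domain $h(\Om)$ admits a defining function $\tilde\rho$ whose real Hessian is uniformly positive definite on $\partial h(\Om)$, and any sufficiently small $\smoo^2$-perturbation of a defining function retains this positivity on the (slightly displaced) boundary. Thus it suffices to produce $\Psi$ with $\Psi \to h$ in $\smoo^2$ on $\overline{\Om}$.

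First I would realize such a $\Psi$ through Andersén--Lempert theory \cite{AnderLemp}. Since the convex object here is the target $V := h(U)$, I apply the approximation scheme to the biholomorphism $h^{-1}\colon V \to U$: its source $V$ is convex (hence Runge and star-shaped), while its image $U$ is Runge by hypothesis. After translating in source and target we may assume $0 \in V$ and $h^{-1}(0)=0$; then the dilation family $g_t(w) = \tfrac1t\, h^{-1}(tw)$ for $t\in(0,1]$, with linear limit $g_0 = D(h^{-1})(0)$, is a $\smoo^1$-isotopy of biholomorphisms connecting (after composing with $(Dh^{-1}(0))^{-1}$) the identity to $h^{-1}$; convexity of $V$ guarantees $tV \subseteq V$, so each $g_t$ is well defined. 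The essential hypothesis is that every intermediate image $g_t(V)$ is Runge, and this is exactly where convexity of $h(U)$ is used: $tV$ is convex, hence Runge in $\cn$ and Runge inside $V$, so its image $h^{-1}(tV)$ is Runge inside $U$, and since $U$ is Runge in $\cn$ the transitivity of the Runge property makes $g_t(V)$ Runge in $\cn$ (the affine normalizations being harmless). Hence $h^{-1}$ is a uniform-on-compacts limit of automorphisms $\sigma_k \in \mathrm{Aut}(\cn)$, and $\Psi_k := \sigma_k^{-1}$ converges to $h$ uniformly on compacts of $U$ by the standard stability of inversion for biholomorphisms.

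Next I would upgrade the convergence and transfer strong convexity. Fixing a compact $L$ with $\overline{\Om} \subset \mathrm{int}\,L \subset L \subset U$, the Cauchy estimates turn uniform convergence $\Psi_k \to h$ on $L$ into $\smoo^2$ (indeed $\smoo^\infty$) convergence on $\overline{\Om}$, and likewise $\Psi_k^{-1} \to h^{-1}$ in $\smoo^2$ near $h(\overline{\Om})$. Writing $W_k := \Psi_k(\Om)$, a defining function for $W_k$ near $\partial W_k$ is $\rho_k := \tilde\rho \circ h \circ \Psi_k^{-1}$, and $\rho_k \to \tilde\rho \circ h \circ h^{-1} = \tilde\rho$ in $\smoo^2$ near $\partial h(\Om)$. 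Since $\partial W_k = \Psi_k(\partial\Om)$ converges to $\partial h(\Om)$, where the real Hessian of $\tilde\rho$ is uniformly positive definite, for $k$ large the real Hessian of $\rho_k$ is positive definite on $\partial W_k$; by \Cref{R:strongconvex} this means $W_k = \Psi_k(\Om)$ is strongly convex, and taking $\Psi := \Psi_k$ for such a $k$ finishes the argument.

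I expect the main obstacle to be the verification of the Andersén--Lempert hypotheses, specifically the Rungeness of the intermediate images $g_t(V)$ (the single point at which convexity of $h(U)$ is genuinely invoked), together with the bookkeeping in the final step: both the defining function and its zero set move under the perturbation, so one must ensure that positivity of the Hessian survives on the displaced boundary $\partial W_k$ rather than merely on the fixed boundary $\partial h(\Om)$.
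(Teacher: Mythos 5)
Your proposal is correct and follows essentially the same route as the paper's own proof: approximate $h^{-1}$ by automorphisms via Andersén--Lempert, invert to get $\Psi_k\to h$, transfer the defining function of $h(\Om)$ by $\tilde\rho\circ h\circ\Psi_k^{-1}$, and use uniform positivity of the Hessian on a neighborhood of $\partial h(\Om)$ containing the displaced boundaries $\partial\Psi_k(\Om)$ for large $k$. The only difference is that you spell out the dilation isotopy and the Rungeness of the intermediate images to justify the Andersén--Lempert step, which the paper simply cites.
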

	\begin{proof}

		Let $U \steq \cn $ be a Runge domain. Assume that $h\colon U \to h(U)$ is a  biholomorphism such that $h(U)$ is a convex  domain. We now invoke Anders\'en-Lempert theorem \cite[Theorem 2.1]{AnderLemp}, to get that $h^{-1}:h(U) \to U$ can be approximated by $Aut(\mathbb{C}^{n})$ uniformly over every compact subset of $h(U)$. Let $\psi_{m}^{-1} \in Aut(\cn)$  such that $\psi_{m}^{-1}$ converges to $h^{-1}$ uniformly on every compact subset of $h(U)$. Then $\psi_{m}$ converges to $h$ uniformly on every compact subset of $U$. We prove that $\psi_{m}(\Om)$ is strongly convex for  large enough $m \in \mathbb{N}$. From \Cref{R:strongconvex}, we obtain a  defining function  $\rho: \mathbb{R}^{2n} \to \mathbb{R}$ of the  domain $h(\Om)$ and a constant $C>0$, such that the following holds: 
		\begin{align}\label{E:strongly2}
			\sum_{j,k =1}^{2n} \frac{\partial ^2 \rho}{\partial x_{j} \partial x_{k}}(P)w_{j}w_{k} \geq C \|w\|^2, \hspace{.5cm} \forall P \in \partial h(\Om) \hspace{.3cm}\forall w \in \mathbb{R}^{2n}.
		\end{align}
  \noindent
Clearly,  $\rho \circ h: U \to \re $ is a $\smoo^{2}$ defining function for $\Om$. Let $V', V'' \subset h(U)$ be such that $h(\OOm) \Subset V' \Subset V'' \Subset h(U)$. Now $\psi_{m}^{-1} \to h^{-1}$ uniformly over $\overline{V''}$. Thus, there exists $m_{0} \in \mathbb{N}$  such that $\psi_{m}^{-1}(\overline{V'}) \Subset h^{-1}(V'') \Subset U$. Therefore, the function $\tilde{\rho}_{m}\colon V' \to \re$ defined by $\tilde{\rho}_{m}(z)=\rho \circ  h \circ \psi_{m}^{-1}(z)$ is 
 well defined and a defining function for $\psi_{m}(\Om)$,  for all $m>m_{0}$.  Since $\psi_{m}^{-1}$ converges to $h^{-1}$ on  every compact subset of $h(U)$, hence  $h \circ \psi_{m}^{-1} \to id_{V'}$  uniformly on every compact subset of $V'$. Consequently, $\tilde{\rho}_{m} \to \rho$  locally uniformly on $\overline{h(\Om)}$. 
 
 Clearly, $\nabla\tilde{\rho}_{m}(z)=\nabla \rho (h(\psi_{m}^{-1}(z)))D(h\circ\psi_{m}^{-1})(z)$. Since $h \circ \psi_{m}^{-1}$ is holomorphic map, hence $D(h\circ\psi_{m}^{-1})(z)$ converges to $I_{n}$ uniformly on every compact subset of $V'$. Since $\nabla \rho$ is continuous on $V'$, hence $\nabla\rho_{m}(z) \to \nabla\rho(z)$ locally uniformly on $V'$.  From the chain rule of second order derivative, we get that  
 
 \begin{align*}
 H(\rho_{m})(z)&=D(h\circ\psi_{m}^{-1}(z))^{T}H(\rho)(h\circ \psi_{m}^{-1}(z))D(h\circ\psi_{m}^{-1}(z))+\\
 &\sum_{j=1}^{2n}\frac{\partial \rho((h\circ \psi_{m}^{-1}(z)))}{\partial x_{j}}H(\Pi^{j}((h\circ \psi_{m}^{-1}))(z),\end{align*} 
		\noindent
		where $\Pi^{j} \colon \mathbb{R}^{2n} \to \mathbb{R}$ is projection on $j$ th component. Since $\rho$ is a $\smoo^{2}$ function, hence $H(\rho)(h\circ \psi_{m}^{-1}(z))$ converges to $H(\rho)(z)$ locally uniformly on $V'$. Since $h \circ \psi_{m}^{-1}$ is holomorphic for all $m \in \mathbb{N}$, hence, $D(h\circ\psi_{m}^{-1})(z) \to I_{n}$ and  $H(\Pi^{j}((h\circ \psi_{m}^{-1}))(z) \to O$  as $m \to \infty$  locally uniformly on $h(U)$. Therefore, we conclude that  $H(\tilde{\rho}_{m}) \to H(\rho)$ uniformly on every compact subset of $V'$, particularly on $\partial{h(\OOm)}$. 

  Let us define a map $F_{m} \colon \partial{h(\OOm)} \times S^{2n-1} \to \mathbb{R}$ by $$F_{m}(p,x)=x^{T}(H(\tilde{\rho}_{m})(p)-H(\rho)(p))x,$$
 where $S^{2n-1}:=\{z \in \cn|\|z\|=1\}$.  
 Since we have $\|H(\tilde{\rho}_{m})(p)-H(\rho)(p))\|\to 0$ uniformly on $\partial{h( \Om)}$, hence $F_{m}(p,x) \to 0$ uniformly over $\partial{h(\OOm)} \times S^{2n-1}$.
 
 Consequently, there exists $ N \in \mathbb{N}$, such that   for all $m >N$, for every  $p \in \partial h(\Om)$ and  for every non zero $(w_{1},w_{2}, \cdots ,w_{2n}) \in \mathbb{R}^{2n} $  we have 
  $$\bigg(\frac{w}{\|w\|}\bigg)^{t}D^{2}\tilde{\rho}_{m}(p)\bigg(\frac{w}{\|w\|}\bigg)>\bigg(\frac{w}{\|w\|}\bigg)^{t}H\rho(p)\bigg(\frac{w}{\|w\|}\bigg)-\frac{C}{2}=\frac{C}{2}.$$ 
 Here $\tilde{\rho}_{m}$ is a $\smoo^{2}$ smooth function. Hence, there exist $r_{p}>0, C'>0$,   such that for all $q \in B(p, r_{p})$, for all non zero  $w 
		\in \mathbb{R}^{2n}$, for all $m>N$, we have  $$\bigg(\frac{w}{\|w\|}\bigg)^{t}H(\tilde{\rho}_{m})(q)\bigg(\frac{w}{\|w\|}\bigg)>C'.$$   Since $\partial h(\Om)$ is a compact subset, hence we conclude that  there exists $\eps>0$ such that  $$\bigg(\frac{w}{\|w\|}\bigg)^{t}H(\tilde{\rho}_{m})(p)\bigg(\frac{w}{\|w\|}\bigg) >C',$$ for all $p \in B(\partial h(\Om), \eps)$ and for all $w \in S^{2n-1}$. We choose $m_{1} \in \mathbb{N}$ such that  $\partial \psi_{m}(\Om) \subset B(\partial h(\Om), \eps)$, for all $m >m_{1}$. Therefore,  $\psi_{m}(\Om)$ is strongly convex  $\forall m >\max\{m_{0}, m_{1}, N\}$. 
 
	\end{proof}
	\begin{lemma}\label{L:convexify}
		Let $\Om \Subset \cn$ be a strongly convex domain with $\smoo^{2}$ boundary. Suppose that $f_{m} \colon \overline{\Om} \to \cn$  with $f_{m}\in \mco(\Om, \cn)\cap \smoo^{2}(\overline{\Om})$ is a  sequence of diffeomorphism such that $f_{m} \to i_{\overline{\Om}}$, uniformly over $\overline{\Om}$, in $\smoo^{2}$ topology. Then there exists $N \in \mathbb{N}$ such that $f_{m}(\Om)$ is strongly convex for  all $m>N$. 
	\end{lemma}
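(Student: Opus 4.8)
The plan is to exhibit, for each large $m$, an explicit $\smoo^{2}$ defining function of $f_{m}(\Om)$ whose full real Hessian is positive definite on the boundary, so that strong convexity follows from \Cref{R:strongconvex}. Identifying $\cn$ with $\mathbb{R}^{2n}$ and applying \Cref{R:strongconvex} to the strongly convex domain $\Om$, I obtain a $\smoo^{2}$ defining function $\rho\colon\mathbb{R}^{2n}\to\mathbb{R}$ of $\Om$ and a constant $C>0$ with
\begin{equation}\label{E:sc-lower}
\sum_{j,k=1}^{2n}\frac{\partial^{2}\rho}{\partial x_{j}\partial x_{k}}(P)\,w_{j}w_{k}\ \ge\ C\|w\|^{2},\qquad \forall\,P\in\partial\Om,\ \forall\,w\in\mathbb{R}^{2n}.
\end{equation}
Since $f_{m}\to i_{\OOm}$ in $\smoo^{2}$, we have $Df_{m}\to I_{n}$ uniformly on $\OOm$, so for $m$ large each $f_{m}$ is a $\smoo^{2}$ diffeomorphism of $\OOm$ onto $f_{m}(\OOm)$ and $\rho_{m}:=\rho\circ f_{m}^{-1}$ is a $\smoo^{2}$ defining function of $f_{m}(\Om)$, with $\partial f_{m}(\Om)=f_{m}(\partial\Om)$ and $\nabla\rho_{m}\neq 0$ there. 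It therefore suffices to show that $H(\rho_{m})$ is positive definite, with a bound independent of $m$, at every point of $f_{m}(\partial\Om)$ for all large $m$.

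The key is to read the Hessian of $\rho_{m}$ along the moving boundary by pulling back to the fixed boundary $\partial\Om$. Using the second-order chain rule exactly as in the proof of \Cref{L:L1}, and evaluating at $z=f_{m}(p)$ with $p\in\partial\Om$ (so that $f_{m}^{-1}(f_{m}(p))=p$ and $D(f_{m}^{-1})(f_{m}(p))=(Df_{m}(p))^{-1}$), I obtain
\begin{align*}
H(\rho_{m})\big(f_{m}(p)\big)
&=\big((Df_{m}(p))^{-1}\big)^{T}\,H(\rho)(p)\,(Df_{m}(p))^{-1}\\
&\qquad+\sum_{j=1}^{2n}\frac{\partial\rho(p)}{\partial x_{j}}\,H\big(\Pi^{j}(f_{m}^{-1})\big)\big(f_{m}(p)\big),
\end{align*}
where $\Pi^{j}\colon\mathbb{R}^{2n}\to\mathbb{R}$ is the projection onto the $j$-th coordinate. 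Since $f_{m}\to i_{\OOm}$ in $\smoo^{2}$, the first and second derivatives of $f_{m}^{-1}$ at $f_{m}(p)$ are expressible through $Df_{m}(p)$ and $D^{2}f_{m}(p)$, whence $(Df_{m}(p))^{-1}\to I_{n}$ and $H(\Pi^{j}(f_{m}^{-1}))(f_{m}(p))\to O$ uniformly for $p\in\OOm$. As $H(\rho)$ and $\nabla\rho$ are continuous and hence bounded on the compact set $\partial\Om$, I conclude that $H(\rho_{m})(f_{m}(p))\to H(\rho)(p)$ uniformly for $p\in\partial\Om$.

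It remains to upgrade this to uniform positive definiteness, which I do with the same sphere-bundle compactness argument as in \Cref{L:L1}. Setting
$$F_{m}(p,x):=x^{T}\big(H(\rho_{m})(f_{m}(p))-H(\rho)(p)\big)x$$
on the compact set $\partial\Om\times S^{2n-1}$, the uniform convergence just established gives $F_{m}\to 0$ uniformly. Combining this with the lower bound \eqref{E:sc-lower}, there is $N\in\mathbb{N}$ so that $x^{T}H(\rho_{m})(f_{m}(p))x\ge \tfrac{C}{2}$ for all $m>N$, all $p\in\partial\Om$ and all $x\in S^{2n-1}$; by homogeneity $H(\rho_{m})$ is then positive definite on $f_{m}(\partial\Om)=\partial f_{m}(\Om)$. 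By \Cref{R:strongconvex} this means $f_{m}(\Om)$ is strongly convex for all $m>N$, proving the lemma. The main obstacle is purely the bookkeeping of the varying domains $f_{m}(\OOm)$: controlling the first and second derivatives of the inverses $f_{m}^{-1}$ uniformly in $m$. Evaluating $H(\rho_{m})$ along $f_{m}(\partial\Om)$ and pulling back to the fixed boundary $\partial\Om$ sidesteps this, since it reduces everything to the uniform $\smoo^{2}$ convergence $f_{m}\to i_{\OOm}$ that is already hypothesized; note that holomorphy of $f_{m}$ plays no role in the convexity estimate itself.
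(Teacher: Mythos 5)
Your proposal is correct and follows essentially the same route as the paper: both take the defining function $\rho\circ f_{m}^{-1}$ of $f_{m}(\Om)$, expand its real Hessian by the second-order chain rule, use the uniform $\smoo^{2}$ convergence $f_{m}\to i_{\OOm}$ to show the correction terms vanish and the leading term converges to $H(\rho)$ uniformly along the boundary, and then invoke compactness of $\partial\Om\times S^{2n-1}$ together with \Cref{R:strongconvex} to get a uniform positive lower bound. Your pullback to the fixed boundary $\partial\Om$ versus the paper's evaluation at $x\in\partial f_{m}(\Om)$ is only a change of parametrization, not a different argument.
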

	\begin{proof}
		Let $\Om \Subset \cn$ be a strongly convex domain with $\smoo^{2}$, boundary. Applying \Cref{R:strongconvex}, we choose a defining function $\rho :\cn \to \mathbb{R}$ of the domain $\Om$ and $C>0$ such that for all $P \in \partial \Om$ and for all $w=(w_{1}, w_{2},\cdots ,w_{2n}) \in \mathbb{R}^{2n}$ the following holds: 
		\begin{align}\label{E:strongly3}
			\sum_{j,k =1}^{n} \frac{\partial ^2 \rho(P)}{\partial x_{j} \partial x_{k}}w_{j}w_{k} \geq C \|w\|^2.
		\end{align} 
\noindent
Here $f_{m}$ is injective on $\overline{\Om}$, and  $Df_{m}(z)$ is invertible for all $z \in \overline{\Om}$, for all $m \in \mathbb{N}$. Hence, from \Cref{L:injective}, we conclude that for all $m \in \mathbb{N}$ there exists a neighborhood of $V_{m}$ of $\overline{\Om}$, such that  $f_{m}$ is also injective on  $V_{m}$. Clearly,  $\rho\circ f_{m}^{-1}\colon f_{m}(V_{m}) \to \mathbb{R}$ is a defining function of the domain $\Om_{m}:=f_{m}(\Om)$. 
		
		Since for every $m\in \mathbb{N}$, $f_{m}\colon \overline{\Om} \to f_{m}(\overline{\Om})$ is a diffeomorphism, hence  $\overline{\Om}_{m}=f_{m}(\overline{\Om}$) and  $\partial \Om_{m}=f_{m}(\partial \Om)$. Consequently, for all $x \in \partial \Om_{m}$ we have that $f_{m}^{-1}(x) \in \partial \Om$. Now for every $x \in \partial  \Om_{m}$ we have $Df_{m}^{-1}(x)=(Df_{m}(f_{m}^{-1}(x)))^{-1}$. Now $\|(Df_{m}(y))^{-1}-I\| \to 0$ locally uniformly over $\overline{\Om}$. Since for all $m \in \mathbb{N}$ and for all $x \in \partial \Om_{m}$  we have  $f_{m}^{-1}(x) \in \partial \Om$, hence $	Df_{m}^{-1}(x) \to I$ uniformly over $\partial \Om_{m}$, as $m \to \infty$ in $\smoo^{1}$ topology. Let $(f_{m}^{-1})^{j}$ denotes the $j$ th component of the map $f_{m}^{-1}$. Clearly, we get that $\nabla (f_{m}^{-1})^{j}(x) \to \underbrace{(0,0, \cdots 1, \cdots, 0)}_{\text{j th position}}$ as $m \to \infty$ uniformly over $\partial \Om_{m}$ in $\smoo^{1}$ norm.  Since $H((f_{m}^{-1})^{j}(x))=D(\nabla (f_{m}^{-1})^{j})(x)$, hence  $H((f_{m}^{-1}))^{j}(x) \to O$, uniformly over $\partial \Om_{m}$ as $m \to \infty$.
		Next, using the chain rule of the Hessian matrix  we obtain that  
		\begin{align}\label{E:Hesscompo}
			H(\rho\circ f_{m}^{-1})(x)&= (Df_{m}^{-1}(x))^{T}H\rho (f_{m}^{-1}(x)) (Df_{m}^{-1}(x))+\sum_{j=1}^{2n}\frac{\partial \rho}{\partial x_{j}}(f_{m}^{-1}(x))H((f_{m}^{-1})^{j})(x),
		\end{align}
  for all  $x\in \partial \Om_{m}$ and for all $m \in \mathbb{N}$.
		We have that  $H((f_{m}^{-1}))^{j}(x) \to O$ uniformly over $\partial \Om_{m}$ as $m \to \infty$. Hence, there exists $N_{1} \in \mathbb{N}$ such that for all $m >N_{1}$
		\begin{align}\label{E:hess}
			\sup_{\|w\|=1, x\in \partial D_{m}}\big|\sum_{j=1}^{2n}\frac{\partial \rho}{\partial x_{j}}(f_{m}^{-1}(x))w^{T}H((f_{m}^{-1})^{j}(x))w \big|<\frac{C}{3}.  
		\end{align}
		Therefore, taking into account \eqref{E:Hesscompo}, \eqref{E:hess}, \eqref{E:strongly3} we conclude that 
		\begin{align}
			w^{T}H(\rho\circ f_{m}^{-1})(x)w & >\frac{2C}{3},
		\end{align}
  for all $w \in \cn$, with $\|w\|=1$,  for all $x \in \partial \Om_{m}$ and for all $m>N_{1}$. 
		Therefore, from \Cref{R:strongconvex}, we conclude that  $\Om_{m}$ is strongly convex for large enough $m \in \mathbb{N}$.
		\end{proof}

	\section{Polynomial convexity and Spirallike domains}\label{S:proof of result}

We begin this section with the following lemma that will be used to prove \Cref{T:Polynostrict}. 

	\begin{lemma}\label{L:spirallike}
		Let $ \Om   \Subset{\cn}$ be a domain containing the origin and  spirallike  with respect to   $V \in \mathfrak{X}_{\mathcal{O}}(\cn)$. Then   $X(t,z) \in \overline{\Om}$,  $\forall z \in \overline{\Om}$, $ \forall t \geq 0$. Moreover, if $\Om$ has $C^{1}$ smooth boundary and $V(z) \notin T_{z} \partial \Om $ (i.e. $V(z)$ is transversal to the boundary) then  $X(t,z) \in \Om$, for all $z \in \overline{\Om}$, for all $t>0$.
	\end{lemma}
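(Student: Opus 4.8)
The plan is to prove the two assertions separately: the first by continuous dependence on initial conditions together with a trapping argument, and the second by a sign-of-the-defining-function computation followed by propagation through the flow.

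For the first assertion I would fix $z \in \overline{\Om}$ and choose a sequence $z_m \in \Om$ with $z_m \to z$. Since $V$ is holomorphic on $\cn$ it is smooth, so its local flow $(t,w) \mapsto X(t,w)$ is continuous on its open domain of definition $\mathcal{D} \subset \re \times \cn$. By \Cref{D:def-spirallike} each trajectory $t \mapsto X(t,z_m)$ exists for all $t \geq 0$ and stays in $\Om$, which is contained in the compact set $\overline{\Om}$ (here I use $\Om \Subset \cn$). Let $[0,T)$ denote the maximal forward interval of existence of $X(\cdot,z)$. For each fixed $t \in [0,T)$ continuous dependence yields $X(t,z_m) \to X(t,z)$; since each $X(t,z_m) \in \overline{\Om}$ and $\overline{\Om}$ is closed, the limit $X(t,z)$ lies in $\overline{\Om}$. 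Thus $X(\cdot,z)$ remains inside the compact set $\overline{\Om}$ throughout $[0,T)$, and since a maximal solution that stays in a compact subset of $\cn$ must exist for all forward time, $T = \infty$ and $X(t,z) \in \overline{\Om}$ for all $t \geq 0$. Note that this step does not use completeness of $V$: global forward existence of the boundary orbits is a byproduct of their being trapped in $\overline{\Om}$.

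For the second assertion it suffices to treat $z \in \partial\Om$, since $z \in \Om$ is immediate from \Cref{D:def-spirallike}. I would take a $C^1$ defining function $r$ for $\Om$ on a neighborhood $W$ of $z$, so that $\Om \cap W = \{r < 0\}$, $\partial\Om \cap W = \{r = 0\}$, and $\nabla r(z) \neq 0$. Setting $g(t) := r(X(t,z))$ for small $t \geq 0$ (where $X(t,z) \in W$ by continuity), $g$ is $C^1$ with $g(0) = 0$ and $g'(0) = \langle \nabla r(z), V(z)\rangle$, the real directional derivative of $r$ along $V(z)$. By the first assertion $X(t,z) \in \overline{\Om} \cap W = \{r \leq 0\}$, so $g(t) \leq 0$ for all small $t \geq 0$; together with $g(0) = 0$ this forces $g'(0) \leq 0$. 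The transversality hypothesis $V(z) \notin T_z\partial\Om = \ker(dr_z)$ says precisely that $g'(0) \neq 0$, whence $g'(0) < 0$. Consequently $g(t) = g'(0)\,t + o(t) < 0$ for all sufficiently small $t > 0$, i.e. there is $\eps > 0$ with $X(t,z) \in \Om$ for $0 < t < \eps$.

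It then remains to propagate this into all positive times via the semigroup property of the autonomous flow. Choosing any $t_1 \in (0,\eps)$ gives $X(t_1,z) \in \Om$, and for $t > t_1$ one has $X(t,z) = X(t - t_1, X(t_1,z))$; since $X(t_1,z) \in \Om$, \Cref{D:def-spirallike} yields $X(s, X(t_1,z)) \in \Om$ for every $s > 0$, so $X(t,z) \in \Om$ for all $t > t_1$. As $t_1 < \eps$, the intervals $(0,\eps)$ and $(t_1,\infty)$ cover $(0,\infty)$, finishing the proof. I expect the two delicate points to be the trapping argument that secures global forward existence of the boundary orbits without assuming $V$ complete, and the deduction $g'(0) < 0$, where transversality is indispensable: without it the orbit could a priori slide along $\partial\Om$ while remaining in $\overline{\Om}$.
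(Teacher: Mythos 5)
Your proof is correct, and it splits into a half that mirrors the paper and a half that takes a genuinely different route. For the first assertion you and the paper rely on the same fact, continuity of the flow in the initial condition, merely packaged differently: the paper argues by contradiction (if a boundary orbit left $\overline{\Om}$ at time $s$, then by continuity of $X_s$ nearby \emph{interior} points would also leave $\Om$, violating spirallikeness), whereas you pass to the limit along interior approximants $z_m\to z$ and use closedness of $\overline{\Om}$; your explicit remark that trapping in the compact set $\overline{\Om}$ guarantees global forward existence of the boundary orbits is a point the paper leaves implicit. For the second assertion the mechanisms differ. The paper supposes $X(t_1,z)\in\partial\Om$ for some $t_1>0$, infers that the whole arc $X(t,z)$, $t\in[0,t_1]$, must then lie in $\partial\Om$ (because if the orbit entered $\Om$ at some earlier time, spirallikeness would force $X(t_1,z)\in\Om$), and derives a contradiction since the velocity $V(X(t,z))$ of a curve confined to $\partial\Om$ is tangent to $\partial\Om$. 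You instead work infinitesimally at $t=0$ with a defining function: part one gives $g(t)=r(X(t,z))\leq 0$, hence $g'(0)=dr_z(V(z))\leq 0$, transversality upgrades this to $g'(0)<0$, the orbit enters $\Om$ immediately, and the semigroup property plus spirallikeness keep it there. Your version makes explicit the one-sided derivative argument and avoids the paper's intermediate claim that the trajectory stays in the boundary on all of $[0,t_1]$ (which the paper asserts without detail, though it is justifiable exactly by the propagation step you use); the paper's version avoids introducing a defining function and works directly with the tangent space. Both arguments are valid and reach the same conclusion.
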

	\begin{proof}
		Let $z_{1} \in \partial \Om$. Suppose that $X(s, z_{1}) \notin \overline{ \Om}$, for some $s >0$. Then $\exists~ r>0$ such that $B(X(s,z_{1}),r)\cap  \Om=\emptyset$. Since $X_{s}\colon\cn \to \cn$ is a continuous map, hence there exist $\delta_{r}>0$, such that $X_{s}(z) \in B(X_{s}(z_{1}),r)$, for all $z \in B(z_{1},\delta_{r})$. Therefore, for all  $z \in  \Om \cap B(z_{1}, \delta_{r})$, it follows  that $X_{s}(z) \in B(X_{s}(z_{1}),r)$. Choose $w \in B(z_{1},\delta_{r}) \cap \Om $. Then  $X_{s}(w) \notin  \Om$. This contradicts the assumption that  $\Om$ is spirallike with respect to $V$. 
  
  Suppose that $V(z) \notin T_{z}\partial\Om$, for every $z \in \partial \Om$. Assume that $z \in \partial \Om$, and  $X(t_{1},z) \in \partial \Om$ for some $t_{1}>0$. Then we get that $X(t,z) \in \partial \Om$ for all $t \in [0, t_{1}]$. Now if $X(t,z) \in \partial \Om$ for $t \in [0,t_{1}]$, then $\frac{d}{dt}(X(t,z)) =V(X(t,z)) \in T_{X(t,z)}\partial \Om$ for $t \in (0,t_{1})$. This again leads to a contradiction with the fact $V(z)\notin T_{z}\partial \Omega$.
\end{proof}

	\begin{proof}[Proof of \Cref{T:Polynostrict}]
		Since polynomial convexity remains invariant under automorphism of $\cn$, hence, without loss of generality we can assume that $D$ is a strictly spirallike domain with respect to the globally asymptotic stable vector field $V \in \mathfrak{X}_{\mathcal{O}}(\cn)$. Suppose that $X\colon \mathbb{R} \times \cn \to \cn$ is the flow of the holomorphic vector field $V$.   We show that the family $\{X_{-t}(D)\}_{t \geq 0}$ forms a Runge and Stein neighborhood basis of $\overline{D}$. Since $D$ is pseudoconvex domain and $X_{t} \in Aut(\cn)$, hence,  $X_{t}(D)$ is pseudoconvex domain for every $t \in \mathbb{R}$. 

 \noindent
  Let $t>0$ and $w \in X_{-t}(D)$. Since $D$ is spirallike with respect to the vector field $V$, hence, for any $\tau >0$ we have $X(t+\tau, w) \in D$. Therefore, $X(-t, (X(t+\tau, w)))\in X_{-t}(D)$. Consequently, 
  for all  $t>0$, $X_{-t}(D)$ is spirallike  with respect to $V \in \mathfrak{X}_{\mathcal{O}}(\cn)$. We now invoke \Cref{R:SG}, to conclude that $X_{-t}(D)$ is Runge domain for all $t>0$.
  

  \noindent
Clearly, $\forall z \in \overline{D}$ we have $z=X_{-t}(X_{t}(z))$. Since $D$ is strictly spirallike, hence, we get that  $X_{t}(z) \in D$ for all $z \in \overline{D}$.  Therefore, $\overline{D} \subset X_{-t}(D)$ for all $t \geq 0$. Let $U$ be a domain in $\cn$ such that $\overline{D} \subset U \subseteq \cn$. Now for every $z \in \overline{D}$ $\exists r_{z}>0$ such that $B(z,r_{z}) \subset U$. Clearly, $\overline{D} \subseteq \cup_{z \in \overline{D}}B(z,\frac{r_{z}}{3})$. Since $\overline{D}$ is compact, hence there are $z_{1},z_{2}, \cdots ,z_{m} \in \overline{D}$ such that $\overline{D} \subset \cup_{i=1}^{m}B(z_{i}, \frac{r_{z_{i}}}{3})$ . Since $X(t,z)$ is continuous map, hence, there exists $T>0$ such that $X_{t}(z_{j}) \in B(z_{j}, \frac{r_{z_{j}}}{3})$  for all $0<t <T$ and for all $j \in \{1, 2, \cdots m\}$. Let $B:=\sup_{z \in \overline{D}} \|DV(z)\|$. Now, for all $w \in \overline{\Om}$, we have  $w \in B(z_{j}, \frac{r_{z_{j}}}{3})$ for some $z_{j} \in \overline{D}$. Let $0<T'<\min\{\frac{1}{B}\ln{\frac{3}{2}},T\}$. Applying \cite[Lemma1.9.3]{Forstbook}, we conclude that,   for all $t \in (0,T')$, the following holds: 
		\begin{align}\label{E:estimate}
			\|X_{-t}(w)-z_{j}\| &=\|X_{-t}(w)-X_{-t}(X_{t}(z_{j}))\|\notag\\
			& \leq e^{Bt}\|w-X_{t}(z_{j})\|\notag\\
			&\leq e^{Bt}\left(\|w-z_{j}\|+\|z_{j}-X_{t}(z_{j})\|\right) \notag\\
			& < e^{Bt}.\frac{2r_{z_{j}}}{3}\notag \\
   &<r_{z_{j}}.
		\end{align} 
\noindent
 Hence, for every $t \in (0,T')$, we obtain $\overline{D} \st X_{-t}(D) \st X_{-t}(\overline{D}) \subset \cup_{j=1}^{m}B(z_{j}, r_{z_{j}}) \subset U$. Therefore,  $\{X_{-t}(D)\}_{t \geq 0}$ forms a  Runge and Stein neighborhood basis of $\overline{D}$. Clearly, $\overline{D}$ is Stein compactum. Since $\OOm$ admits a Runge neighborhood basis, hence, $\mathcal{O}(\overline{D}) \subset \mathcal{P}(\overline{D})$. Therefore, from \Cref{R:steincompacta}, we conclude that $\overline{D}$ is polynomially convex.
	\end{proof}

We deduce the following corollary using \Cref{L:spirallike} and  \Cref{T:Polynostrict}.

\begin{corollary}
	Let $V \in \mathfrak{X}_{\mathcal{O}}(\cn)(n \geq 2)$ be a complete asymptotic stable vector field. Let $ D \subseteq \cn$ be  a pseudoconvex domain with $\smoo^{1}$ boundary containing the origin. Assume that $D$ is spirallike with respect to  $V$  and $V(z) \notin T_{z}\partial D$,  $\forall z \in \partial D$ (i.e. boundary is transversal to the vector field). Then $\overline{D}$ is polynomially convex. 
\end{corollary}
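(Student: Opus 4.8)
The plan is to reduce the corollary directly to \Cref{T:Polynostrict}, the only new ingredient being that the transversality hypothesis upgrades ordinary spirallikeness to \emph{strict} spirallikeness. The key observation is that, by \Cref{D:def-spirallike}, a domain is strictly spirallike with respect to $V$ precisely when its flow satisfies $X(t,z)\in D$ for every $z\in\overline D$ and every $t>0$, and this is exactly the conclusion furnished by the second assertion of \Cref{L:spirallike}. Throughout I take $D$ to be bounded, so that $\overline D$ is compact and the notion of polynomial convexity applies; this is consistent with the hypotheses $D\Subset\cn$ and boundedness demanded by the two results being invoked.

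First I would invoke the ``Moreover'' clause of \Cref{L:spirallike}. Since $D$ has $\smoo^{1}$ boundary, is spirallike with respect to $V$, and satisfies $V(z)\notin T_{z}\partial D$ for every $z\in\partial D$, that clause gives $X(t,z)\in D$ for all $z\in\overline D$ and all $t>0$. By the definition of strict spirallikeness, this says precisely that $D$ is strictly spirallike with respect to $V$. The mechanism behind \Cref{L:spirallike} is that a trajectory issuing from a boundary point and remaining on $\partial D$ for positive time would force $V(X(t,z))\in T_{X(t,z)}\partial D$, contradicting transversality; so transversality is exactly what prevents trajectories from touching the boundary.

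Next I would apply \Cref{T:Polynostrict} with the trivial automorphism $\psi=\mathrm{id}\in\mathrm{Aut}(\cn)$. Then $\psi(D)=D$ is a bounded pseudoconvex domain containing the origin which, by the previous step, is strictly spirallike with respect to the complete asymptotic stable vector field $V$. Hence \Cref{T:Polynostrict} applies verbatim and yields that $\overline D$ is polynomially convex, which is the desired conclusion.

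Since both ingredients are already in hand, there is no serious analytic obstacle: the corollary is a short deduction. The only point that requires care is the conceptual translation between the geometric transversality hypothesis $V(z)\notin T_{z}\partial D$ and the dynamical notion of strict spirallikeness, and this translation is precisely what \Cref{L:spirallike} packages for us.
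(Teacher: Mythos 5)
Your proposal is correct and follows exactly the route the paper intends: the paper states that the corollary is deduced from \Cref{L:spirallike} and \Cref{T:Polynostrict}, and your two steps (using the ``Moreover'' clause of \Cref{L:spirallike} to upgrade spirallikeness to strict spirallikeness via transversality, then applying \Cref{T:Polynostrict} with $\psi=\mathrm{id}$) are precisely that deduction. Your explicit remark that $D$ must be taken bounded for the statement to make sense is a reasonable reading of the hypotheses.
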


\begin{proof}[Proof of \Cref{T:polystrict1}]
	Let $D$ be a bounded pseudoconvex domain.   Let $\theta(t,z)$ be the flow of the vector field $V$. 
 Let $\sigma\colon [0, \infty) \to \mathbb{R}$ defined by $\sigma(t)=r(\theta(t,z))$. Clearly,  for every $z \in \overline{D}$, $r(z) \leq 0$. Now we have the following 
	\begin{align}\label{E:s1}
	\dfrac{d \sigma(t)}{\,dt}&=\sum_{j=1}^{n}\frac{\partial r (\theta(t,z))}{\partial z_{j}}\frac{(\theta(t,z))^{j}}{\,dt}+\frac{\partial r (\theta(t,z))}{\partial \overline{z_{j}}}\frac{\overline{(\theta(t,z))^{j}}}{\,dt}\notag\\
	&=2\cdot \rl\bigg(V^{j}(\theta(t,z))\frac{\partial r(\theta(t,z))}{\partial z_{j}}\bigg)<0.
    \end{align}
 
From our assumption we get that $	\dfrac{d \sigma(t)}{\,dt}<0$ for all $t \in [0, \infty)$. Hence, from \eqref{E:s1},  we have $r(\theta(t,z)) < r(z)$   $ \forall z \in \overline{D}$. Therefore, $\overline{D}$ is a strictly spirallike domain with respect to  a globally asymptotic stable vector field $V$. Therefore, applying \Cref{T:Polynostrict}, we conclude that $\overline{D}$ is polynomially convex.
	\end{proof}

\begin{proof}[Proof of \Cref{P:global Narshiman}]
	Since compact convex subsets of $\cn$ are polynomially convex  
 and polynomial convexity is invariant under $Aut(\cn)$ hence $(2) \implies (1)$. Now,  we prove that $(1) \implies (2)$.  Let  $\Om \Subset \cn$ be a strongly pseudoconvex domain with $\smoo^{m}$ boundary for some $m>2+\frac{1}{2}$, such that  $\OOm$ is polynomially convex. Suppose that $D$ is   a bounded strongly convex domain with the same boundary regularity as $\Om$ and   $f\colon  \Om \to D$ be a biholomorphism. In view of	\Cref{L:L1}, it is enough to construct the following: 
	\begin{itemize}
		\item
		A Runge neighborhood $U$ of $\overline{\Om}$ and a univalent map $h\colon U \to h(U)$, such that $h(U)$ is convex.
		\item
		$h(\Om)$ is strongly convex with  $\smoo^2$ boundary.
	\end{itemize}
	Since $D$ is a bounded strongly convex domain  with $\smoo^{2+\frac{1}{2}}$ boundary, hence,  $D$ is a bounded strongly pseudoconvex domain with the same boundary regularity.  Now applying \Cref{R: pinchuk},  we get a diffeomorphic extension of $f$ on $\OOm$. Let   $\tilde{f}\colon \overline{\Om} \to \overline{D}$ be the diffeomorphic  extension of $f$. In view of \Cref{R: foarnaess} we conclude that there exist a sequence  $\phi_{m} \in \mco(\overline{\Om}, \cn)$ such that $\phi_{m}$ converges to $f$ uniformly over $\OOm$ in $\smoo^{2}$ topology. 
	
	Here $\tilde{f}\colon \OOm \to \overline{D}$ is a  diffeomorphism (at least $\smoo^{2}$ regularity) . Therefore, from \Cref{L:injective}, we obtain a neighborhood $V'$ of $\OOm$ such that $\tilde{f}$ injective on $V'$. Shrinking  $V'$ if needed  we can assume that $D\tilde{f}(z)$ is invertible for every $z \in V'$. We consider an open set $V$ such that  $\OOm \st V \Subset V'$. Clearly, $\tilde{f}\colon V \to \tilde{f}(V)$ is a $\smoo^{1}$ immersion.

	Here $\OOm$ is a compact subset of $V$ and $\phi_{m}$ converges uniformly to the map $\tilde{f}$ on $\OOm$  in $\smoo^{2}$ topology. Hence, invoking  \Cref{R:hrsh exr}, we get a large enough $m' \in \mathbb{N}$ and a neighbourhood $U_{2}$ of $\OOm$ in $V$ such that $\phi_{m}$ is injective on $U_{2}$ for all $m >m'$. Since $\phi_{m}$ is also holomorphic on $\OOm$, hence it is injective holomorphic on a neighborhood $V'_{m}$ of $\OOm$. Hence, $\phi_{m}$ is particularly a diffeomorphism on $\OOm$ for all $m>m'$. Therefore, for all $m>m'$,   $\phi_{m} \circ\tilde{f}^{-1}\colon \overline{D} \to \cn$ sequence of diffeomorphism on $\overline{D}$ such that $\phi_{m}\circ \tilde{f}^{-1} \to i_{\overline{D}}$ in $\smoo^{2}$ topology. Hence, we infer from \Cref{L:convexify}, that there exist $m_{1} \in \mathbb{N}$ such that $\phi_{m} \circ\tilde{f}^{-1}(D)=\phi_{m}(\Om)$ is strongly convex domain for all $m >\max\{m_{1},m'\}$. Therefore, taking $h=\phi_{m}$, for any fix $m>\max\{m',m_{1}\}$, we get  a domain $U_{2}$ containing $\OOm$ such that  $h\colon U_{2} \to h(U_{2})$ is a  univalent map and $h(\Om)$ is strongly convex. Since $\overline{	\Omega}$ is polynomially convex, hence, it has a Runge neighborhood basis. Choose $U
	''$ a Runge neighborhood of $\overline{	\Omega}$ and $h(U'')$ is a neighborhood of $h(\overline{\Omega})$. Since $h(\overline{\Om})$ is a strongly convex domain, hence, it has a strongly convex neighborhood basis. Now choose  a strongly convex domain $D'$ such that $h(\overline{\Om}) \subset D' \subset h(U'')$.  Since $D'$ is Runge in $\cn$, hence, $(D', h(U''))$ is a Runge pair. Therefore, $(h^{-1}(D'), U'')$ is a Runge pair. Since $(U'', \cn)$ is a Runge pair, hence, $(h^{-1}(D'), \cn )$ is a Runge pair. Therefore, $h^{-1}(D')$ is a Runge domain. Choose, $U=h^{-1}(D')$, which implies that $h(U)$ is convex and $U$ is Runge. Therefore, we are done.
Since every convex domain is Runge (see \cite{kasimi}) and the automorphism of $\cn$ maps the Runge domain onto the Runge domain, hence conclusion (2) implies that $\Om$ is a Runge domain. From \cite[Page 233]{Wermer}, we conclude that $\OOm$ is polynomially convex implies that $\OOm^{\circ}$ is Runge. Since for every 
 convex domain $D$ we have $\overline{D}^{\circ}=D$, hence, we have  $$\OOm^{\circ}=\overline{f(D)}^{\circ}=f(\overline{D})^{\circ}=f(\overline{D}^{\circ})=f(D)=\Om$$
 Hence, $\Om$ is a Runge domain.
	
\end{proof}

\section{Loewner chains}\label{S:loewner}
 Let  $A \in Gl(n, \cplx)$. We say that a family of the univalent map $(f)_{t}$  on $D$ is an $A$-normalized Loewner chain on $D$ if $f_{t}(0)=0$ and $df_{t}(0)=e^{tA}I_{n}$, for all $t \geq 0$,  with $f_{s}(D) \subset f_{t}(D)$.
The following Proposition is a generalization of \cite[Proposition 3.5]{hamada2020}. We denote $Aut_{0}(\cn)=\{\psi\colon \cn \to \cn| ~\psi \text{~is an automorphism,}~ \psi(0)=0, D\psi(0)=I_{n}\}$. A particular version of the proposition, when the vector field $V =-I$,   will be used to prove \Cref{T:loewner}.
	\begin{proposition}\label{L:filter}
	Let $ D \Subset \cn$ be a   domain containing the origin. Let $f \colon D \to f(D) \Subset \cn$ be a univalent map such that $f \in \mathcal{S}(D)$. If there exists $\Psi \in Aut_{0}(\cn)$ such that $\Psi(f(D))$ is bounded strictly spirallike domain with respect to a complete globally asymptotic stable vector field $V \in \mathfrak{X}_{\mathcal{O}}(\cn)$ then $f$ embeds
into a filtering  $-DV(0)$- normalized $L^{d}$-Loewner chain with range $\cn$.
	\end{proposition}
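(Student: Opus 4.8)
The plan is to build the Loewner chain directly out of the flow of the vector field, after first absorbing the auxiliary automorphism $\Psi$ into a change of field so that the normalization comes out exactly as $e^{-t\,DV(0)}$. Write $A:=DV(0)$, let $(X_t)_{t\in\re}$ be the flow of $V$, and set $\Om:=\Psi(f(D))$. The crucial preliminary point is that, since $\Psi\in Aut_{0}(\cn)$ has $D\Psi(0)=I_n$, the conjugate flow $Y_t:=\Psi^{-1}\circ X_t\circ\Psi$ is generated by a complete holomorphic vector field $W\in\mathfrak{X}_{\mathcal{O}}(\cn)$ which is again globally asymptotically stable (because $Y_t(z)=\Psi^{-1}(X_t(\Psi(z)))\to\Psi^{-1}(0)=0$) and has the \emph{same} linearization at the origin, $D Y_t(0)=D\Psi^{-1}(0)\,DX_t(0)\,D\Psi(0)=DX_t(0)$. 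Moreover $f(D)=\Psi^{-1}(\Om)$ is strictly spirallike with respect to $W$, since $Y_t(\overline{f(D)})=\Psi^{-1}(X_t(\OOm))\subset\Psi^{-1}(\Om)=f(D)$ for $t>0$. Replacing $V$ by $W$ (and $X_t$ by $Y_t$), I may therefore assume $\Psi=\mathrm{id}$, so that $\Om:=f(D)$ is itself bounded, contains the origin, and is strictly spirallike with respect to $V$.

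Next I would define the candidate chain
\[
f_t:=X_{-t}\circ f\colon D\to X_{-t}(\Om),\qquad t\ge 0 .
\]
Each $f_t$ is univalent and holomorphic as a composition of the automorphism $X_{-t}$ with the univalent $f$, and $f_0=f$. Since $V(0)=0$ we have $X_{-t}(0)=0$, hence $f_t(0)=0$. Writing $\Phi(t):=D_zX_t(0)$, the variational equation $\dot\Phi(t)=DV(X_t(0))\,\Phi(t)=A\,\Phi(t)$ with $\Phi(0)=I_n$ gives $\Phi(t)=e^{tA}$, so $D_zX_{-t}(0)=e^{-tA}$ and, using $df(0)=I_n$, $df_t(0)=e^{-tA}=e^{t(-DV(0))}$: this is exactly the claimed $-DV(0)$-normalization. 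The nesting $f_s(D)\subset f_t(D)$ for $s<t$ is immediate: applying the automorphism $X_t$, the inclusion $X_{-s}(\Om)\subset X_{-t}(\Om)$ is equivalent to $X_{t-s}(\Om)\subset\Om$, which holds because $t-s>0$ and $X_\tau(\OOm)\subset\Om$ for all $\tau>0$ by strict spirallikeness.

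Setting $\Om_t:=f_t(D)=X_{-t}(\Om)$, the remaining conditions follow quickly. For $L^d$-regularity, for $z\in K\St D$ and $0\le s\le t\le T$ the flow equation yields
\[
\|f_t(z)-f_s(z)\|=\Big\|\int_s^t V\big(X_{-\tau}(f(z))\big)\,d\tau\Big\|\le\int_s^t\kappa_{K,T}\,d\tau ,
\]
with the constant $\kappa_{K,T}:=\sup\{\|V(w)\|:w\in\bigcup_{\tau\in[0,T]}X_{-\tau}(f(K))\}<\infty$, finite because the displayed set is the continuous image of the compact set $[0,T]\times f(K)$; a constant lies in $L^d$ for every $d$. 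The Loewner range is all of $\cn$: for any $p\in\cn$, global asymptotic stability gives $X_{t}(p)\to 0$, so $X_{t_0}(p)\in\Om$ for some $t_0$, whence $p\in X_{-t_0}(\Om)\subset\bigcup_{t\ge0}\Om_t$. Finally, for the filtering conditions of \Cref{D:filter}: condition (1), $\overline{\Om}_s\subset\Om_t$ for $t>s$, follows as in the nesting argument since $X_{t-s}(\OOm)\subset\Om$; and condition (2) is precisely the assertion that $\{X_{-\tau}(\Om)\}_{\tau>0}$ is a neighborhood basis of $\OOm$, reduced to the case $s=0$ by applying the automorphism $X_s$.

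The main obstacle is this last point, filtering condition (2), and its proof is exactly the quantitative flow estimate already carried out in the proof of \Cref{T:Polynostrict}: one covers the compact set $\OOm$ by finitely many balls $B(z_j,r_{z_j}/3)$ inside a given neighborhood $U$ of $\OOm$, uses continuity of the flow to make $\|z_j-X_\tau(z_j)\|$ small, and controls $\|X_{-\tau}(w)-z_j\|$ by $e^{B\tau}$ with $B:=\sup_{\OOm}\|DV\|$ via \cite[Lemma~1.9.3]{Forstbook}, so that $X_{-\tau}(\OOm)\subset U$ for all sufficiently small $\tau>0$. The only other delicate point is that the opening conjugation must leave the linear part of the field unchanged; this is guaranteed by $D\Psi(0)=I_n$, and it is precisely what upgrades the conclusion from a chain normalized by some conjugate of $A$ to one that is genuinely $-DV(0)$-normalized.
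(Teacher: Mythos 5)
Your proposal is correct and follows essentially the same route as the paper: the chain is the same $f_t=\Psi^{-1}\circ X_{-t}\circ\Psi\circ f$ (you merely absorb $\Psi$ into a conjugated vector field at the outset, which is legitimate since $D\Psi(0)=I_n$ preserves the linearization), and the verifications of nesting, normalization, the $L^d$ bound, the range, and the filtering condition all match the paper's, with your filtering step (2) reduced to $s=0$ and handled by the same quantitative flow estimate used in the proof of \Cref{T:Polynostrict}. If anything, your write-up is slightly more careful than the paper's at the normalization and $L^d$ steps (variational equation and the integral of $V$ along the backward flow), but the underlying argument is the same.
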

	\begin{proof}
 Let $f \colon D \to f(D)$ be a univalent map such that $f \in \mathcal{S}(D)$. Here, $f(D)$ is a bounded domain  and there exists  $\Psi \in \text{Aut}_{0}(\cn)$  such that $\Psi(f(D))$  is strictly  spirallike domain with respect to the globally asymptotic stable vector field  $V$. Suppose that $X\colon \mathbb{C}\times  \cn \to \cn $ be the flow of the vector field $V$. We show that  $f\colon [0,\infty) \times D \to \cn$ defined by $f_{t}=\Psi^{-1}(X_{-t}(\Psi(f(z))))$  forms a filtering  $-DV(0)$- normalized $L^{d}$-Loewner chain with range $\cn$. For any $0 \leq s <t$, assume that  $f_{s}(D)=\Omega_{s}$. At first we show that $\OOm_{s} \st \Om_{t}$. If $w_{s} \in \overline{f_{s}(D)}$, then there exist a sequence $\{z_{sm}\}_{m \in \mathbb{N}}$ in $D$ such that  $w_{s}=\lim_{m \to \infty}\Psi^{-1}(X_{-s}(\Psi(f(z_{sm}))))$. Let $z_{tm}=f^{-1}(\Psi^{-1}(X_{(t-s)}(\Psi(f(z_{sm})))))$. Then we get that $$w_{s}=\lim_{m \to \infty}f_{t}(z_{tm})=f_{t}(\lim_{m \to \infty}z_{tm}).$$
 Clearly, $\lim_{m \to  \infty}(\Psi(f(z_{sm}))) \in \overline{\Psi(f(D))}$.  Since $\Psi(f(D))$ is strictly  spirallike domain with respect to the vector field  $V$, hence, we conclude that $X_{\tau}(\overline{\Psi(f(D))}) \subset \Psi(f(D))$ for all $\tau>0$. Hence, $X_{t-s}(\lim_{m \to  \infty}(\Psi(f(z_{sm})))) \in \Psi (f(D))$ for all $0<s <t$. Hence, we have $\lim_{m \to \infty}z_{tm} \in D$ for  all $t>0$. From this  we obtain that $\overline{\Om}_{s} \subset \Om_{t}$ for all $s<t$.
 Now let $\overline{\Om}_{s} \subset U$. Then, we have
	\begin{align*}
		X_{-s}\Psi(\overline{f(D)})& \subset \Psi(U).
	\end{align*}
	Since $X_{-s}(\Psi(\overline{f(D)}))$  is a compact subset and $\Psi(U)$ is an open subset of $\cn$, hence,  there exists $r_{s}>0$ such that $B(X_{-s}\Psi(\overline{f(D)}),r_{s}) \subset \Psi(U)$. Choose $t_{0}\in (0,\frac{r_{s}}{2R})$,  where $R=\sup_{z \in (\Psi(\overline{f(D)}))}\|V(X(-s,z))\|$. If $w \in X_{-t}(\Psi(\overline{f(D)}))$ for some  $t \in (s, s+t_{0})$. We have $w=X(-(s+t'), z')$ for some $z'\in \Psi(\overline{f(D)})$ and $0<t'<t_{0}$. Now  we deduce the following: 
	\begin{align*}
		\text{dist}(w,X_{-s}(\Psi(\overline{f(D)))}&\leq \|X(-(s+t'),z')-X(-s,z')\|\\
		&=|t'||V(X(-s,z'))|\\
		&<r_{s}.
	\end{align*}
	It follows that $X_{-t}(\Psi(\overline{f(D)}))\subset \Psi(U)$. Consequently, $\Psi^{-1}(X_{-t}\Psi(\overline{f(D)})) \subset U$ for all $t \in (s, s+t_{0})$. Therefore, $f_{s}$ is a filtering Loewner chain. 
 
 Clearly, for every $t,s \geq 0$ with $0 \leq s \leq t \leq T$  and for any compact subset $K \subset D$, 
	\begin{align*}
		\|f_{s}(z)-f_{t}(z)\|& \leq \sup_{0 \leq \tau \leq T, \xi \in K}\|Df_{t}(\xi)\|\|t-s\|\\
		&\leq \int_{t}^{s}\kappa(\zeta)\,d\zeta,
	\end{align*}
	where $\kappa(\zeta)$ is constant function on $[0,T] \to \mathbb{R}^{+}$. Clearly $\kappa \in L_{loc}^{d}([0,T], \mathbb{R^{+}})$. Let $w \in \cn$. Since $\Psi^{-1}(X_{t}(\Psi(w)) )\to 0$, therefore, there exists  $t>0$ large enough so that $\Psi^{-1}(X_{t}\Psi(w)) \in f(D)$. Choose, $z=f^{-1}(\Psi^{-1}X_{t}\Psi(w)) \in D$. Then, we have $w \in f_{t}(D)$. Therefore, $Rf_{t}(D)=\cn$. Clearly, $Df_{t}(0)=e^{-tDV(0)}$. Therefore, $f_{s}$ is filtering $-DV(0)$ normalized $L^{d}$-Loewner chain.
	\end{proof}
	\begin{lemma}\label{L:RungeC}
	Let $D$ be a bounded strongly convex domain. Then, $\overline{\mathcal{S}_{\mathcal{R}}(D)}= \mathcal{S}_{\mathcal{R}}(D)$, where closure  is taken in compact open topology.  
	\end{lemma}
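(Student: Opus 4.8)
The plan is to show that $\mathcal{S}_{\mathcal{R}}(D)$ is sequentially closed in the compact-open topology (which is metrizable), so I take a sequence $f_m\in\mathcal{S}_{\mathcal{R}}(D)$ converging locally uniformly to some $f$ and argue that $f\in\mathcal{S}_{\mathcal{R}}(D)$. The limit $f$ is holomorphic, and evaluating at $0$ and differentiating (by the Cauchy estimates the differentials converge locally uniformly) gives $f(0)=0$ and $df(0)=I_n$; in particular $\det df(0)=1\neq 0$. Hence, by the several-variables analogue of Hurwitz's theorem (a locally uniform limit of univalent maps is either univalent or has identically degenerate differential), $f$ is univalent, so $f\in\mathcal{S}(D)$. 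It remains to prove that $\Omega:=f(D)$ is Runge, which is the entire content of the lemma.

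Since $0\in D$ and $D$ is bounded convex, the dilates satisfy $\overline{rD}\Subset D$ for $0<r<1$ and $\bigcup_{r<1}rD=D$; each $\overline{rD}$ is compact and convex, hence polynomially convex. As $f$ is a homeomorphism of $D$ onto $\Omega$, the compacts $f(\overline{rD})$ exhaust $\Omega$. Recall that if every compact subset $K$ of an open set has $\widehat{K}$ contained in that set, then the set is Runge (the hulls $\widehat{K_j}$ of a normal exhaustion are then polynomially convex compacts exhausting it, and Oka--Weil gives density of polynomials). Thus it suffices to prove $\widehat{f(\overline{rD})}\subset\Omega$ for every $0<r<1$. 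Fix such an $r$ and pick $\rho\in(r,1)$. First, each $f_m(\overline{\rho D})$ is polynomially convex: since $f_m(D)$ is Runge and $f_m\colon D\to f_m(D)$ is a biholomorphism, the polynomial hull of the compact $f_m(\overline{\rho D})$ stays inside $f_m(D)$ and there coincides with the $\hol(f_m(D))$-hull, which via $f_m^{-1}$ equals the $\hol(D)$-hull of $\overline{\rho D}$; the latter equals $\overline{\rho D}$ because $\overline{\rho D}$ is already polynomially convex.

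The crux is to compare $f(\overline{rD})$ with these $f_m$-images at the slightly larger radius $\rho$. I claim that $f(\overline{rD})\subset f_m(\rho D)$ for all large $m$. Set $\delta:=\mathrm{dist}\big(f(\overline{rD}),\,f(\partial(\rho D))\big)>0$, the two compacts being disjoint as $f$ is injective. Since $f_m\to f$ uniformly on the compact $\partial(\rho D)$, for large $m$ the straight-line homotopy $H_t=(1-t)f+tf_m$ satisfies $\|H_t(\zeta)-w\|\ge\delta-\|f_m(\zeta)-f(\zeta)\|\ge\delta/2>0$ for all $\zeta\in\partial(\rho D)$, all $t\in[0,1]$, and all $w\in f(\overline{rD})$; hence $\deg(f_m,\rho D,w)=\deg(f,\rho D,w)=1$, the last equality because $f$ is an orientation-preserving homeomorphism onto its image and $w$ has a unique preimage in $\rho D$. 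A nonzero degree forces $w\in f_m(\rho D)$, and the threshold on $m$ is uniform in $w$, proving the claim. Consequently, by monotonicity of the hull and polynomial convexity of $f_m(\overline{\rho D})$,
\[
\widehat{f(\overline{rD})}\subset\widehat{f_m(\overline{\rho D})}=f_m(\overline{\rho D})\qquad\text{for all large } m.
\]
Any $w\in\widehat{f(\overline{rD})}$ then satisfies $w=f_m(z_m)$ with $z_m\in\overline{\rho D}$; passing to a subsequence $z_m\to z_*\in\overline{\rho D}$ and using $f_m\to f$ uniformly there yields $w=f(z_*)\in f(\overline{\rho D})\subset\Omega$. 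Hence $\widehat{f(\overline{rD})}\subset\Omega$, as required.

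The main obstacle is precisely this last comparison: polynomial convexity is \emph{not} preserved under Hausdorff limits of compacts, so one cannot simply pass to the limit in the statement ``$f_m(\overline{rD})$ is polynomially convex''. The degree argument, combined with the device of enlarging the radius from $r$ to $\rho$ so that the limit compact sits strictly inside the approximating ones, is what transfers the hull information to $f$. One could alternatively try to route through \Cref{P:global Narshiman} by realizing $f(\rho D)$ as $\mathrm{Aut}(\cn)$-equivalent to a strongly convex domain, but producing such automorphisms in the limit is delicate, whereas the degree computation above is self-contained.
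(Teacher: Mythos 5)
Your proof is correct, but it takes a genuinely different route from the paper's. The paper's argument is two lines long and leans on heavy machinery: each $f_n\in\mathcal{S}_{\mathcal{R}}(D)$ is a univalent map from a convex (hence starshaped, Runge and Stein) domain onto a Runge domain, so by the Anders\'en--Lempert theorem it is a locally uniform limit of elements of $\mathrm{Aut}(\cplx^n)$; a diagonal argument then exhibits $f$ itself as such a limit, and the Forstneri\v{c}--Rosay convergence theorem (\cite[Proposition 1.2]{FR1993}), applied on the Runge domain $D$ and using the nondegeneracy $df(0)=I_n$, gives directly that $f$ is univalent with Runge image. Your argument replaces both of these inputs with a self-contained analysis: Hurwitz for univalence, and then a direct verification that $\widehat{K}\subset f(D)$ for the exhausting compacts $f(\overline{rD})$, transferred from the approximants via the topological degree and the radius gap $r<\rho$. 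The degree step is the right device for circumventing the failure of polynomial convexity under Hausdorff limits, and your estimate on the boundary homotopy is correct. Two small points you should make explicit: (i) the assertion that the polynomial hull of $f_m(\overline{\rho D})$ stays inside $f_m(D)$ is not formal --- it uses the standard fact (essentially H\"ormander's characterization) that a Stein open set which is Runge in $\cplx^n$ is polynomially convex as an open set, i.e.\ $\widehat{K}\subset U$ and not merely $\widehat{K}\cap U\Subset U$; a citation is warranted there; (ii) the several-variables Hurwitz statement you invoke requires $\det df\not\equiv 0$, which your normalization $df(0)=I_n$ supplies, so that part is fine. What your approach buys is independence from the Anders\'en--Lempert theory, at the cost of length; what the paper's approach buys is brevity, at the cost of importing two substantial external theorems.
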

\begin{proof}
    Let $f \in \overline{\mathcal{S}_{\mathcal{R}}(D)} $. Hence, there exists a sequence $f_{n} \in \mathcal{S}_{\mathcal{R}}(D)$ such that $f_{n}(D)$ is Runge for all $n \in \mathbb{N}$. Now in view of Anders\'en-Lempert theorem \cite[Theorem 2.1]{AnderLemp},  every $f_{n}$ can be approximated by elements of $\text{Aut}(\cn)$ locally uniformly on $D$. Therefore, $f\colon D \to \cn$ can also be approximated by elements  $\text{Aut}(\cn)$ locally uniformly. Since $D$ is Runge, hence  using \cite[Proposition 1.2]{FR1993}, we conclude that $f(D)$ is a Runge domain. Hence, $f \in \mathcal{S}_{\mathcal{R}}(D)  $.
\end{proof}

Here we present proof of \Cref{T:loewner}.

\begin{proof}
	Suppose that $0 \in D \Subset \cn$ is a strongly convex domain  and $f:D \to \cn$ is an univalent map  such that $f \in \mathcal{S}_{\mathfrak{F}}^{1}(D)$. Let $\Om_{s}:=f_{s}(D)$. According to  our assumption, $f$ can
be embedded into a filtering $L^d$-Loewner chain with the Loewner range $\cn$. Hence,  from \cite[Theorem 5.1]{Hamada15},   $(\Om_{s}, Rf_{t}(D))$ is a Runge pair for all $s
	\in [0,\infty)$. Since $Rf_{t}(D)=\cn$, hence,  $\{\Om_{s}\}_{s>0}$ are Runge. Since  $\Om_{s}$ is biholomorphic to $D$,  $\Om_{s}$ are also stein domain for every $s \in [0, \infty)$. Therefore, $\{\Om_{s}\}_{s>0}$ forms a Runge and stein neighborhood basis of $f(D)$. From \Cref{R:steincompacta}, we conclude that $\overline{f(D)}$ is polynomially convex.
	
	 
	 Conversely, suppose that $f\colon D \to \cn$  be a univalent map such that $f(0)=0$ and  $f(D)$ is a bounded  strongly pseudoconvex domain with $\smoo^{m}$ boundary, with $m>2+\frac{1}{2}$  and  $\overline{f(D)}$ is polynomially convex. Now, invoking  \Cref{P:global Narshiman}, we get that there exists $\Psi \in Aut(\cn)$ with $\Psi(0)=0$ such that $\Psi(f(D))$ is convex. Particularly, $\Psi(f(D))$ is spirallike with respect to $-I$. Now in view of \Cref{L:filter}, we conclude that $f \in \mathcal{S}^{1}_{\mathfrak{F}}(D)$.

\end{proof}
\begin{corollary}\label{C:cor1}
	Let  $D$ be a bounded strongly convex domain containing the origin. Then,   $\overline{\mathcal{S}_{\mathfrak{F}}^{1}(D)}=\mathcal{S}_{\mathcal{R}}(D)$ .
\end{corollary}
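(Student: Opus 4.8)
The plan is to establish the two inclusions $\overline{\mathcal{S}_{\mathfrak{F}}^{1}(D)}\subseteq\mathcal{S}_{\mathcal{R}}(D)$ and $\mathcal{S}_{\mathcal{R}}(D)\subseteq\overline{\mathcal{S}_{\mathfrak{F}}^{1}(D)}$ separately, using \Cref{T:loewner}, \Cref{L:RungeC}, and the Anders\'en--Lempert theorem; throughout I assume $\partial D$ carries the $\smoo^{m}$ regularity with $m>2+\tfrac12$ needed to invoke \Cref{T:loewner}.

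First I would prove $\mathcal{S}_{\mathfrak{F}}^{1}(D)\subseteq\mathcal{S}_{\mathcal{R}}(D)$. If $f\in\mathcal{S}_{\mathfrak{F}}^{1}(D)$ then $f$ embeds into a filtering normalized Loewner chain $(f_{t})$ with $f_{0}=f$ and $R(f_{t})=\cn$. Exactly as in the forward half of the proof of \Cref{T:loewner}, \cite[Theorem 5.1]{Hamada15} shows that $(\Omega_{s},Rf_{t}(D))$ is a Runge pair for every $s$, and since $Rf_{t}(D)=\cn$ each $\Omega_{s}=f_{s}(D)$ is Runge; taking $s=0$ gives that $f(D)=\Omega_{0}$ is Runge, i.e. $f\in\mathcal{S}_{\mathcal{R}}(D)$. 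As $\mathcal{S}_{\mathcal{R}}(D)$ is closed in the compact-open topology by \Cref{L:RungeC}, passing to closures yields $\overline{\mathcal{S}_{\mathfrak{F}}^{1}(D)}\subseteq\mathcal{S}_{\mathcal{R}}(D)$.

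For the reverse inclusion I would argue by approximation. Fix $f\in\mathcal{S}_{\mathcal{R}}(D)$, so $f(D)$ is Runge, while $D$ is convex, hence Runge. By the Anders\'en--Lempert theorem \cite[Theorem 2.1]{AnderLemp}, applied as in \Cref{L:RungeC}, $f$ can be approximated locally uniformly on $D$ by automorphisms $\psi_{m}\in Aut(\cn)$. These need not be normalized, so I would correct them by the affine automorphisms $A_{m}(z)=(d\psi_{m}(0))^{-1}(z-\psi_{m}(0))$ and set $\widetilde{\psi}_{m}:=A_{m}\circ\psi_{m}\in Aut_{0}(\cn)$. Since $\psi_{m}(0)\to f(0)=0$ and $d\psi_{m}(0)\to df(0)=I_{n}$, we have $A_{m}\to\mathrm{id}$ locally uniformly, so $\widetilde{\psi}_{m}\to f$ in the compact-open topology.

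It remains to check $\widetilde{\psi}_{m}|_{D}\in\mathcal{S}_{\mathfrak{F}}^{1}(D)$ for each $m$. Here $\widetilde{\psi}_{m}|_{D}\in\mathcal{S}(D)$ because $\widetilde{\psi}_{m}(0)=0$ and $d\widetilde{\psi}_{m}(0)=I_{n}$; the set $\widetilde{\psi}_{m}(D)$ is bounded and, being the image of the strongly convex domain $D$ under a global biholomorphism, is strongly pseudoconvex with boundary of the same regularity as $\partial D$; and $\overline{\widetilde{\psi}_{m}(D)}=\widetilde{\psi}_{m}(\overline{D})$ is polynomially convex because $\overline{D}$ is convex, hence polynomially convex, and polynomial convexity is preserved by $Aut(\cn)$. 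Thus \Cref{T:loewner} applies to $\widetilde{\psi}_{m}|_{D}$ and places it in $\mathcal{S}_{\mathfrak{F}}^{1}(D)$. Since $\widetilde{\psi}_{m}|_{D}\to f$, this gives $f\in\overline{\mathcal{S}_{\mathfrak{F}}^{1}(D)}$, hence $\mathcal{S}_{\mathcal{R}}(D)\subseteq\overline{\mathcal{S}_{\mathfrak{F}}^{1}(D)}$, and combined with the first inclusion the equality follows. The main obstacle I anticipate is not any single estimate but the bookkeeping in the reverse inclusion: one must simultaneously invoke Anders\'en--Lempert to get automorphic approximants, renormalize them to lie in $Aut_{0}(\cn)$ without destroying the convergence to $f$, and verify that their restrictions meet every hypothesis of \Cref{T:loewner}---in particular that the image retains strong pseudoconvexity, the required boundary regularity, and polynomially convex closure.
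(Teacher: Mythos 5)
Your first inclusion matches the paper's (the paper simply declares $\mathcal{S}_{\mathfrak{F}}^{1}(D)\subset\mathcal{S}_{\mathcal{R}}(D)$ and then applies \Cref{L:RungeC}, exactly as you do), and your renormalization of the Anders\'en--Lempert approximants into $Aut_{0}(\cn)$ is the same step the paper performs, only written out more carefully. The genuine difference is in how you certify $\widetilde{\psi}_{m}|_{D}\in\mathcal{S}_{\mathfrak{F}}^{1}(D)$: you route this through \Cref{T:loewner}, which forces you to impose $\smoo^{m}$ boundary regularity with $m>2+\frac{1}{2}$ on $\partial D$ --- a hypothesis that is \emph{not} in the statement of the corollary, so as written your argument only proves a weaker result. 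The paper instead invokes \Cref{L:filter} directly: since $\widetilde{\psi}_{m}\in Aut_{0}(\cn)$, the automorphism $\Psi=\widetilde{\psi}_{m}^{-1}\in Aut_{0}(\cn)$ carries $\widetilde{\psi}_{m}(D)$ back to the convex domain $D$, which by \cite[Remark 2.5 (ii)]{hamada2020} is strictly spirallike with respect to $-I$; \Cref{L:filter} then produces the filtering chain with range $\cn$ with no boundary regularity whatsoever. This is also logically cleaner: the hard content of the reverse direction of \Cref{T:loewner} is manufacturing, via \Cref{P:global Narshiman}, an automorphism that convexifies $f(D)$ starting from polynomial convexity of its closure --- but for $\widetilde{\psi}_{m}(D)$ that automorphism is already in hand, namely $\widetilde{\psi}_{m}^{-1}$, so passing through polynomial convexity of $\widetilde{\psi}_{m}(\overline{D})$ and back is a detour. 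Replace your final step with a direct appeal to \Cref{L:filter} and the extraneous regularity hypothesis disappears, giving the corollary in the generality stated.
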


\begin{proof}
	Clearly, $\mathcal{S}_{\mathfrak{F}}^{1}(D)\subset \mathcal{S}_{\mathcal{R}}(D)$. Thus, we have $\overline{\mathcal{S}_{\mathfrak{F}}^{1}(D)} \subseteq \overline{\mathcal{S}_{\mathcal{R}}(D)}$. From \Cref{L:RungeC}, we conclude that $\overline{\mathcal{S}_{\mathfrak{F}}^{1}(D)} \subseteq \mathcal{S}_{\mathcal{R}}(D)$. Now let $f \in \mathcal{S}_{\mathcal{R}}(D)$. Then by Anders\'en-Lempert theorem \cite[Theorem 2.1]{AnderLemp}, there exist a sequence $\{\Psi_{m}\}_{m \in \mathbb{N}} \in Aut(\cn)$ such that $\psi_{m} \to f$ uniformly over every compact subset of $D$. Since $f(0)=0$ and $df(0)=id$, hence, we can assume that $\psi_{m}(0)=0$ and $d\psi_{m}(0)=id.$ From \cite[Remark 2.5 (ii)]{hamada2020} we conclude that  the convex domain is strictly spirallike with respect to the vector field $-I$. Clearly, $\psi_{m}(D)$ satisfies the condition of \Cref{L:filter}. Therefore,  $\psi_{m}|_{D} \in \mathcal{S}_{\mathfrak{F}}^{1}(D)$. Consequently, $f \in \overline{\mathcal{S}_{\mathfrak{F}}^{1}(D)}$. 
\end{proof}

\section{ Dense holomorphic curves and universal mappings}\label{S:densecurve}
We start this section with a couple of key lemmas that  will be used to prove \Cref{T:densemap1}, \Cref{T:densemap2}.

The following lemma is an application of \Cref{T:Polynostrict}. It will be used  crucially in the proof of \Cref{T:densemap1}.
  \begin{lemma}\label{L:polyapp}
    Let $ \Om \subset \cn $ be a pseudoconvex domain  containing the origin. Assume that $\Om$ is a strictly spirallike domain  with respect to a complete globally asymptotic stable vector field $V \in \mathfrak{X}_{\mathcal{O}}(\cn)$. Let $Y$ be a connected complex manifold. Set $K=(\overline{\mathbb{D}} \cup \{2\}) \times \OOm$ and $I=[1,2] \times \{0\}$. Let $W \subset \mathbb{C}^{n+1}$ be a neighborhood of $K$ and $f \colon W \cup I \to Y$ is a continuous map that is holomorphic on $W$. Then there exists a sequence $\{f_{j} \colon D_{j} \to Y\}_{j \in \mathbb{N}}$ of holomorphic maps from open neighborhood of $K \cup I \subset \mathbb{C}^{n+1}$ such that 
     \begin{enumerate}
         \item [1.]
         $f_{j}\big|_{K \cup I} \to f$ as $j \to \infty$
         \item[2.]
         $f_{j}(2,.)\big|_{\Om}=f(2,.)\big|_{\Omega}$ for all $j \in \mathbb
         {N}$.
     \end{enumerate}
 \end{lemma}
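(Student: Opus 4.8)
The plan is to obtain the maps $f_j$ directly from Forstneri\v{c}'s approximation-with-interpolation theorem (\Cref{R:for04}), applied in $X=\mathbb{C}^{n+1}$, using the complex subvariety $X_0=\{2\}\times\cn$ to pin down the slice $\{2\}\times\Om$ exactly. Concretely, I would set $K_0:=K=(\overline{\mathbb{D}}\cup\{2\})\times\overline{\Omega}$, $M:=(1,2)\times\{0\}$, and $S:=K_0\cup M$. The endpoints $(1,0)$ and $(2,0)$ of the segment $I$ already lie in $K_0$ (since $0\in\overline{\Omega}$), so $S=K\cup I$ is compact, $M=S\setminus K_0$ is disjoint from $K_0$, and $M$ is a real-analytic, totally real, $1$-dimensional submanifold of $\mathbb{C}^{n+1}$ (its tangent direction $\mathbb{R}(1,0,\dots,0)$ meets the $J$-image $\mathbb{R}(i,0,\dots,0)$ only at the origin).

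Next I would check the hypotheses of \Cref{R:for04}. Since $\Om$ is strictly spirallike with respect to the complete globally asymptotically stable field $V$, \Cref{T:Polynostrict} gives that $\overline{\Omega}$ is polynomially convex. As $\overline{\mathbb{D}}\cup\{2\}$ is polynomially convex in $\mathbb{C}$ (its complement is connected) and a product of polynomially convex compacta is polynomially convex, $K_0=K$ is polynomially convex; and \Cref{L:kallin}, applied to the polynomially convex set $\overline{\Omega}\ni 0$, shows that $S=K\cup I$ is polynomially convex. Hence both $K_0$ and $S$ are holomorphically convex Stein compacta, as required. With $m=\dim_{\mathbb{R}}M=1$ and $M$ of class $\smoo^{\infty}$ we have $r=\infty\geq \tfrac{m}{2}+1$, so the regularity conditions hold and we may take $k=0$; the given $f$ restricts to a holomorphic map on a neighborhood $U\subset W$ of $K_0$ and to a $\smoo^{0}$ map on $M$. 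Finally $X_0=\{2\}\times\cn$ is a closed complex subvariety of $\mathbb{C}^{n+1}$ with $X_0\cap M=\emptyset$, because points of $M$ have first coordinate in $(1,2)$.

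Applying \Cref{R:for04} with these data and interpolation order $s=1$ along $X_0$, I obtain open neighborhoods $D_j:=V_j\supset S\supset K\cup I$ and holomorphic maps $f_j\colon D_j\to Y$ such that $f_j\to f$ uniformly on $K_0$ and in the $\smoo^{0}$-sense on $M$, and such that $f_j$ agrees with $f$ to order $1$ along $X_0\cap V_j$. Since $K_0\cup M=K\cup I$ as sets, the two convergence statements combine to give $f_j|_{K\cup I}\to f$, which is conclusion~(1). The interpolation clause forces $f_j=f$ on $X_0\cap V_j\supset\{2\}\times\overline{\Omega}$, and restricting to the slice yields $f_j(2,\cdot)|_{\Om}=f(2,\cdot)|_{\Om}$ for every $j$, which is conclusion~(2).

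The substantive content lies not in the approximation machinery but in arranging its hypotheses: the polynomial convexity of $S=K\cup I$, which rests on \Cref{T:Polynostrict} (making $\overline{\Omega}$ polynomially convex) together with the Kallin-type gluing of \Cref{L:kallin}, and the observation that the slice to be preserved sits on a complex subvariety $X_0$ disjoint from the totally real arc $M$, so that the interpolation option of \Cref{R:for04} is available. The one point demanding care is that the interpolation be taken to order $s\geq 1$: this is precisely what upgrades mere approximation on $\{2\}\times\overline{\Omega}$ to the exact identity required in~(2), and $s=1$ already suffices.
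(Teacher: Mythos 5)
Your argument is essentially the paper's: both proofs establish polynomial convexity of $\overline{\Omega}$ via \Cref{T:Polynostrict}, of $K\cup I$ via \Cref{L:kallin}, and then feed the configuration $K_0=K$, $M=(1,2)\times\{0\}$ into \Cref{R:for04} with the interpolation clause supplying conclusion (2). The one point where you diverge is the choice of interpolation subvariety: the paper takes, for each $x\in\Omega$, an affine hyperplane $L_x\ni x$ with $0\notin L_x$ and interpolates along $\{2\}\times L_x$, whereas you take the single hyperplane $X_0=\{2\}\times\cn$. Your choice is the cleaner one --- applied literally, the paper's per-point subvarieties would produce a sequence depending on $x$, while your global $X_0$ yields one sequence that fixes the whole slice $\{2\}\times\Omega$ at once; the only care required, which you address, is that $X_0$ meets the closed segment $I$ at the endpoint $(2,0)$ but is disjoint from $M=S\setminus K_0$ itself, which is all that \Cref{R:for04} as stated demands (and is presumably why the paper's authors insisted $0\notin L_x$). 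No gap; the proposal is correct.
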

\begin{proof}
Invoking \Cref{T:Polynostrict} we conclude that $\OOm$ is polynomially convex. Now from  \Cref{L:kallin}, we get that $K \cup I$ is polynomially convex. Now invoking  \Cref{R:for04},  we get there exists a sequence of holomorphic maps defined on the neighborhood of $K \cup I$ such that $f_{j}|_{K \cup I} \to f$. Since for every $x \in \Om$ we can construct a hyperplane $L_{x}$ in $\cn$ passing through $x$ and not containing the origin. Consider a closed complex subvarity  $\{2\} \times L_{x}$ and again using \Cref{R:for04}, we get $f_{j}(2,x)=f(2,x)$ for all $x \in \Om$. This proves the lemma.
\end{proof}

\begin{lemma}\label{P: dense1}
    Let $\Om \Subset \cn$ be a bounded pseudoconvex domain containing the 
 the origin which is strictly spirallike with respect to a complete globally asymptotic stable vector field  $V \in \mathfrak{X}_{\mathcal{O}}(\cn)$. Let $Y$ be a connected complex manifold. Let $f \colon \mathbb{D} \to \mco(\Om, Y)$ be a holomorphic map and $\mathcal{U} \subset \mco(\Om, Y)$ a nonempty open subset. Then there exists a sequence of holomorphic maps $\{f_{j}\colon W_{j} \to \mco(\Om, Y)\}_{j \in \mathbb{N}}$ from open neighborhoods $W_{j}$ of $\overline{\mathbb{D}} \cup [1,2] \st \cplx$ such that 
 \begin{enumerate}
     \item [1.]
     $f_{j}\big |_{\mathbb{D}} \to f$
     \item[2.]
     $f_{j}(2) \in \mathcal{U}$ for all $j \in \mathbb{N}$.
 \end{enumerate}
\end{lemma}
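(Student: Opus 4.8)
The plan is to reduce this statement about holomorphic maps valued in $\mco(\Om,Y)$ to the concrete approximation result \Cref{L:polyapp} by passing to the associated holomorphic maps on the product $\cplx\times\Om$. First I would unwind the definition of a holomorphic map into $\mco(\Om,Y)$: to the given $f\colon\disc\to\mco(\Om,Y)$ I associate $\hat f\colon\disc\times\Om\to Y$, $\hat f(\zeta,x)=f(\zeta)(x)$, which is holomorphic by hypothesis. The target function value $f(2)\in\mathcal{U}$ has to be prescribed, so I would first use the openness of $\mathcal{U}$ and the fact that $\mco(\Om,Y)$ carries the compact-open topology to fix a basic neighborhood: choose $g\in\mathcal{U}$ and a compact $L\St\Om$ together with coordinate data so that any $h\in\mco(\Om,Y)$ agreeing with $g$ closely enough on $L$ lies in $\mathcal{U}$. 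This is where I want the second conclusion of \Cref{L:polyapp}, namely that the approximants can be pinned to equal a prescribed holomorphic map on the slice $\{2\}\times\Om$.

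Next I would assemble a single continuous map $F\colon W\cup I\to Y$ on a neighborhood of $K\cup I$, where $K=(\overline{\disc}\cup\{2\})\times\OOm$ and $I=[1,2]\times\{0\}$, as in \Cref{L:polyapp}. On a neighborhood $W$ of the disc-part I set $F=\hat f$ (extended holomorphically, which is possible since $\hat f$ is holomorphic on $\disc\times\Om$ and hence on a neighborhood of each compact subset after a slight shrink), and near the slice $\{2\}\times\OOm$ I set $F(2,x)=g(x)$ (a fixed element of $\mathcal{U}$); along the arc $I$, which only touches $\Om$ at the origin, I interpolate continuously so that $F$ is continuous on $W\cup I$ and holomorphic on $W$. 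The arc $I$ is the mechanism that connects the disc to the point $2$ inside a single polynomially convex set, letting the Oka--Weil-type approximation of \Cref{L:polyapp} produce genuinely holomorphic approximants on a full neighborhood.

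Applying \Cref{L:polyapp} to $F$ yields holomorphic maps $\Phi_j\colon D_j\to Y$ on neighborhoods of $K\cup I$ with $\Phi_j|_{K\cup I}\to F$ and $\Phi_j(2,\cdot)|_{\Om}=F(2,\cdot)|_{\Om}=g$. I then reinterpret each $\Phi_j$ as a map $f_j\colon W_j\to\mco(\Om,Y)$ by setting $f_j(\zeta)(x)=\Phi_j(\zeta,x)$, where $W_j\subset\cplx$ is the slice-neighborhood of $\overline{\disc}\cup[1,2]$ coming from $D_j$; this $f_j$ is holomorphic in the sense of the definition precisely because $\Phi_j$ is holomorphic on $D_j$. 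The uniform convergence $\Phi_j\to F$ on the compact $K$ translates, via the compact-open topology on $\mco(\Om,Y)$, into $f_j|_{\disc}\to f$, giving conclusion (1); and $\Phi_j(2,\cdot)=g$ gives $f_j(2)=g\in\mathcal{U}$ for every $j$, which is conclusion (2).

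The main obstacle I anticipate is constructing the continuous interpolating map $F$ along the arc $I$ cleanly: I must glue the holomorphic datum $\hat f$ near $\overline{\disc}\times\OOm$ to the constant-in-$\zeta$ datum $g$ near $\{2\}\times\OOm$ while keeping $F$ well defined and continuous on the one-dimensional connecting set $I=[1,2]\times\{0\}$, and I must ensure the hypotheses of \Cref{L:polyapp} (continuity on $W\cup I$, holomorphy on $W$) are met without the two pieces clashing on an overlap. Checking that $f_j$ as defined is holomorphic in the required sense, and that convergence in compact-open topology genuinely follows from uniform convergence on $K$, is routine once the product-to-fiber dictionary is set up, so the real work is the geometric bookkeeping of the glued map $F$.
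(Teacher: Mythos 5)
Your overall strategy is the same as the paper's: pass to the associated map $\hat f$ on $\D\times\Om$, build a map on a neighborhood of $K\cup I$ with the prescribed value on the $\{2\}$-slice, apply \Cref{L:polyapp}, and translate back through the product-to-fiber dictionary. The back-translation steps (holomorphy of $f_j$, uniform convergence on $K$ giving convergence in the compact-open topology, connectedness of $Y$ for the interpolation along $I$) are indeed routine, and the arc-gluing you worry about is unproblematic since $I$ meets $K$ only at the two endpoints $(1,0)$ and $(2,0)$.

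The genuine gap is in the step you dismiss parenthetically: you never actually produce a map that is holomorphic on a \emph{neighborhood} of $K=(\overline{\D}\cup\{2\})\times\OOm$, which is a hypothesis of \Cref{L:polyapp}. Your justification --- ``$\hat f$ is holomorphic on $\D\times\Om$ and hence on a neighborhood of each compact subset after a slight shrink'' --- fails because $\overline{\D}\times\OOm$ is \emph{not} a compact subset of the open set $\D\times\Om$; a holomorphic map on $\D\times\Om$ has no reason to extend past the boundary, and an unspecified ``shrink'' of $K$ destroys the polynomial convexity input (\Cref{L:kallin} is applied to $\OOm$ itself) and no longer yields convergence on all of $\D$. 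The same problem occurs, more severely, on the $\{2\}$-slice: a general $g\in\mathcal U\subset\mco(\Om,Y)$ is defined only on $\Om$, so ``$F(2,x)=g(x)$ near $\{2\}\times\OOm$'' is not even defined on $\OOm$, let alone on a neighborhood. The paper closes this gap using \emph{strict} spirallikeness a second time (beyond its use inside \Cref{L:polyapp}): since $\widetilde X_t(\overline{\D}\times\OOm)\subset\D\times\Om$ for $t>0$, the map $\hat f_t:=\hat f\circ\widetilde X_t$ is holomorphic on $\widetilde X_{-t}(\D\times\Om)$, a genuine neighborhood of $\overline{\D}\times\OOm$, and $\hat f_t\to\hat f$ locally uniformly as $t\to0^+$; likewise $g_t:=g\circ X_t$ is holomorphic on a neighborhood of $\OOm$ and lies in $\mathcal U$ for $t$ small. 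Replacing $\hat f$ by $\hat f_t$ and $g$ by $g_t$ makes your construction of $F$ legitimate; without this flow argument (or an equivalent substitute), the hypotheses of \Cref{L:polyapp} are not met and the proof does not go through.
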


\begin{proof}
    Suppose that $\hat{f} \colon \mathbb{D} \times \Om \to Y$ is associated holomorphic map for $f$ defined by $\hat{f}(z,x)=f(z)(x)$. Note that $\mathbb{D}$ and $\Om$ are strictly spirallike with respect to $-id$ and $V$ respectively. Hence, $\mathbb{D} \times \Om$ is strictly spirallike with respect to the vector field $\widetilde{V}(z)=(-z, V(z)) \in \mathfrak{X}_{\mathcal{O}}(\mathbb{C}^{n+1})$. Clearly,  $\widetilde{X}\colon \mathbb{R} \times \mathbb{C}^{n+1}$ defined by  $\widetilde{X}(t,(z,x))=(e^{-t}z, X(t,x))$ is the flow of the vector field $\widetilde{V}$. We have $\widetilde{X}_{t}(\overline{\mathbb{D}} \times \OOm) \subset  \mathbb{D} \times \Om$, for all $t >0$. Hence, for all $t>0$, we get  $\overline{\mathbb{D}} \times \OOm \subset \widetilde{X}_{-t}(\mathbb{D} \times \Om)$.

\noindent    
Define $\widehat{f_{t}} \colon \widetilde{X}_{-t}(\mathbb{D} \times \Om) \to Y$ by $\widehat{f_{t}}(z',x')=\widehat{f}(\widetilde{X}_{t}(z',x'))$. Therefore, $\widehat{f_{t}}$ is defined on a neighborhood of $\overline{\D} \times \OOm$. We now  show that $\widehat{f_{t}} \to \widehat{f}$ locally uniformly on $\mathbb{D} \times \Om$. Let $K \subset \mathbb{D} \times \Om$ be a compact set. Since $\widetilde{X}_{t} \to id_{\D \times \Om}$ as $t \to 0^{+}$ uniformly on $K$ and $\widehat{f}$ is a uniformly continuous  map on every compact subset, hence $\widehat{f_{t}} \to \widehat{f}$  uniformly on $K$ as $t \to 0^{+}$. Let $ u \in \mathcal{U}$. Similarly, for all $t>0$ we  consider $\hat{u}_{t}: X_{t}(\Om) \to Y$ defined by $\hat{u}_{t}(x)=u(X(t,x))$. Then, for each $t>0$, we have $u_{t} \in \mco(\OOm, Y)$ and $u_{t} \to u$ locally uniformly on $\Om$. Since $\hat{f}$ can be approximated locally uniformly by holomorphic functions  defined on $\overline{\D} \times \OOm$, it is enough to consider that $\hat{f} \in \mco(\overline{\D}\times \OOm)$. Similarly, we can assume that $\hat{u} \in \mco(\OOm, Y)$.


\noindent
 Now  proceeding in a similar way as \cite[Proof of Lemma 2.1]{kus2017} and using \Cref{L:polyapp}, we conclude that there exist  sequence $\{g_{j} \colon V_{j} \to Y\}_{j \in \mathbb{N}}$ of holomorphic map from open neighborhoods $V_{j}$ of $\big((\overline{D} \cup \{2\})\times \OOm\big) \cup ([1,2] \times \{0\}) \subset \mathbb{C}^{n+1}$ such that 
\begin{enumerate}
    \item [i.]
    $g_{j} \big|_{\D \times \Om} \to \hat{f}$ as $j \to \infty$ locally uniformly
    \item [ii.]
    $g_{j}(2, .)\big|{\Om} =u$, for all $j \in \mathbb{N}$.
\end{enumerate}


 Since $\Om$ is  a strictly spirallike domain with respect to  $V$, hence for any open neighbourhood $N_{\OOm}$ of $\OOm$, we have $X_{t}(\OOm) \subset N_{\OOm}$, for all $t \in [0,1]$. From the continuity of the map $X_{t}$ and compactness of $[0,1] \times \OOm$, we obtain an open set  $U\times G \st \cplx \times \mathbb{C}^{n+1}$, with  $[0,1] \times \OOm \st U \times G$ and $X_{t}(G) \st N_{\OOm}$ for all $t \in U$. Now again proceeding  the same way as  \cite[Proof of Lemma 2.1]{kus2017} and using \Cref{L:polyapp}, we conclude the lemma.


    \end{proof}
 The next lemma is an application of \cite[Lemma 1]{FW2005}. It can be proved similarly as \cite[lemma 2.2]{kus2017}, using \Cref{P: dense1}. Therefore, we omit its proof.
 \begin{lemma}\label{L:l1}
     Let $\{D_{j}\}_{j \in \mathbb{N}}$ be  a sequence of open neighborhood of $\D \cup [1,2] \st \cplx$. Then there exists a sequence $\{\phi_{j} \colon \D \to D_{j}\}_{j \in \mathbb{N}}$ of holomorphic maps such that for all $j \in \mathbb{N}$ the following holds
      \begin{itemize}
         \item [1.]
        $\phi_{j}\big|_{\D} \to id\big|_{\D}$
        \item[2.]
       $ 2 \in \phi_{j}(\D)$
     \end{itemize}
 \end{lemma}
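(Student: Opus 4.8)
The plan is to realize each $\phi_{j}$ as a Riemann map of $\D$ onto a carefully chosen simply connected domain $G_{j}$ squeezed between $\D\cup[1,2]$ and $D_{j}$, and then to read off $\phi_{j}\to \mathrm{id}$ from the Carath\'eodory kernel theorem. This is the one--variable incarnation of the ``tentacle pushing'' construction of \cite[Lemma 1]{FW2005}: the bulk of the disc is mapped close to itself, while a thin collar near $\partial\D$ is stretched out along the segment $[1,2]$ so that the image reaches the point $2$.

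First I would construct the target domains. Fix $j$. Since $D_{j}$ is an open neighbourhood of $\D\cup[1,2]$, it contains a ball about the contact point $1\in\partial\D\cap[1,2]$ as well as a neighbourhood of $[1,2]$; hence I can choose a simply connected domain $G_{j}$ with $\D\subset G_{j}\subset D_{j}$ obtained from $\D$ by attaching a thin ``finger'' running along $[1,2]$ together with a small bump of radius $\delta_{j}>0$ around the tip $2$. The finger and the bump are arranged so that their widths tend to $0$ as $j\to\infty$; in particular $2\in G_{j}$ for every $j$, while no fixed neighbourhood of any point of $[1,2]$ (including $2$) lies in $G_{j}$ eventually. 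Because the segment has empty interior and is attached to $\D$ at the single boundary point $1$, the Carath\'eodory kernel of the sequence $\{G_{j}\}$ with respect to $0$ is exactly $\D$.

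Next I would invoke the Riemann mapping theorem to obtain univalent maps $f_{j}\colon \D\to G_{j}$ normalised by $f_{j}(0)=0$ and $f_{j}'(0)>0$, and then apply the Carath\'eodory kernel theorem: since $G_{j}\to\D$ in the sense of kernel convergence about $0$, the maps $f_{j}$ converge locally uniformly on $\D$ to the normalised Riemann map of $\D$ onto $\D$, namely $\mathrm{id}_{\D}$. Setting $\phi_{j}:=f_{j}$ then gives holomorphic maps $\phi_{j}\colon\D\to G_{j}\subset D_{j}$ with $\phi_{j}|_{\D}\to \mathrm{id}|_{\D}$, while $2\in G_{j}=\phi_{j}(\D)$; these are exactly conclusions (1) and (2).

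The point requiring care is that one cannot hope for $\phi_{j}$ to approximate the identity \emph{uniformly up to $\partial\D$}: if it did, $\phi_{j}(\D)$ would stay close to $\D$ and could never contain $2\notin\overline{\D}$. The convergence is therefore only locally uniform on $\D$, and all of the ``action'' is concentrated in the boundary collar that is stretched along the finger out to $2$. Thus the delicate bookkeeping is to keep $2\in G_{j}$ for every $j$ while simultaneously forcing the kernel to collapse back onto $\D$; the shrinking widths $\delta_{j}\to 0$ are precisely what reconcile these two competing demands. (Alternatively, following \cite{FW2005} and \cite[Lemma 2.2]{kus2017} more literally, one can build $\phi_{j}$ directly by Runge--Mergelyan approximation on the polynomially convex sets $\{\,|z|\le r_{j}\,\}\cup[1,2]$ with $r_{j}\uparrow 1$, which bypasses the Riemann map but needs the same pinching estimate near the boundary.)
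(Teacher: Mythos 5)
Your argument is correct, but it is not the route the paper takes: the paper omits the proof entirely, deferring to \cite[Lemma 2.2]{kus2017}, which (following \cite[Lemma 1]{FW2005}) is an approximation-theoretic construction — one takes the continuous map on the polynomially convex set $\overline{\D(0,r_j)}\cup[r_j,2]$ that is the identity on the disc and stretches the segment $[r_j,1]$ onto $[r_j,2]$, approximates it by entire functions via Mergelyan/Runge with interpolation at an interior point so that the value $2$ is attained exactly (the same device as in \Cref{L:polyapp}), and restricts to $\D$. You instead squeeze a simply connected domain $G_j=\D\cup B([1,2],\delta_j)$, $\delta_j\downarrow 0$, between $\D\cup[1,2]$ and $D_j$ and take normalized Riemann maps $\phi_j\colon\D\to G_j$; since $G_j$ is a union of two convex open sets with convex intersection it is simply connected, the kernel of $\{G_j\}$ (and of every subsequence) at $0$ is exactly $\D$, and the Carath\'eodory kernel theorem gives $\phi_j\to\mathrm{id}_\D$ locally uniformly while $2\in G_j=\phi_j(\D)$ trivially by surjectivity. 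Your approach buys a self-contained one-variable proof that needs no Rouch\'e or interpolation step to hit $2$ exactly, works verbatim under the stated weaker hypothesis ($D_j\supset\D\cup[1,2]$ rather than $\supset\overline{\D}\cup[1,2]$), and even yields injective $\phi_j$; the approximation route buys nothing extra here but is the one that scales to the higher-dimensional, manifold-valued setting of \Cref{P: dense1}, which is why the paper phrases everything in that language. Your closing caveat — that the convergence can only be locally uniform on $\D$, never uniform up to the boundary — is exactly the right point to flag.
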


\begin{definition}
    Let $X, Y$ be topological spaces. A sequence $\{f_{\nu}\}  \st \smoo(X, Y)$ is compactly diverges if for every pair of compacts $ H \subseteq X$ and $K \subseteq Y$ there exists $\nu_{0} \in \mathbb{N}$ such that $f_{\nu}(H) \cap  K= \emptyset$ for every $\nu \geq \nu_{0}$. 
    \end{definition}
 \noindent
 A  domain $\Om \St \cn$  is said to be taut if every sequence $\{f_{n}\}_{n} \st \mco(\D, \Om)$  is compactly divergent in $\mco(\D, \Om)$ or has a subsequence convergent in $\mco(\D, \Om)$. If $\Om$ is taut, then, from \cite[Theorem 5.1.5]{Kobaya},  we get that for every complex manifold $Y$ and every sequence sequence $\{f_{n}\}_{n \in \mathbb{N}} \st  \mco(Y, \Om)$ either $\{f_{n}\}$ has a convergent subsequence or $\{f_{n}\}$ is compactly divergent.

 Next lemma will be used to prove \Cref{T:densemap2}.
 \begin{lemma}\label{L:GT2}
     Let $\widetilde{\Om} \St \cn $ be $c_{\widetilde{\Om}}$ finitely compact   pseudoconvex domain.  Assume that  $\Om \St \cn  $ contains the origin is taut domain and strictly spirallike with respect to complete globally asymptotic stable vector field $V \in \mathfrak{X}_{\mathcal{O}}(\mathbb{\cplx}^{n})$. Let $\tau \in Aut(\widetilde{\Om})$ be an automorphism such that $\{\tau^{j}\}_{j \in \mathbb{N}}$ is compactly divergent. Then the map $C_{\tau}\colon\mco(\widetilde{\Om}, \Om) \to \mco(\widetilde{\Om}, \Om)$  defined by $C_{\tau}(f)=f\circ \tau $ is hypercyclic with respect to the sequence $(j)_{j \in \mathbb{N}}$.
 \end{lemma}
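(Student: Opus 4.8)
The plan is to deduce hypercyclicity of $C_\tau$ from Birkhoff's transitivity criterion \Cref{R:brik1}. First I would record the soft hypotheses: since $\widetilde{\Om},\Om$ are complex manifolds, $\mco(\widetilde{\Om},\Om)$ with the compact-open topology is a separable, completely metrizable Baire space, and it has no isolated points because $\Om$ is a positive-dimensional domain (any $f$ can be perturbed on a small coordinate patch). The operator $C_\tau(f)=f\circ\tau$ is continuous for the compact-open topology, and $C_\tau^{\,j}(f)=f\circ\tau^{j}$. Thus it suffices to prove topological transitivity: for every pair of nonempty open sets $\mathcal{U},\mathcal{V}\subset\mco(\widetilde{\Om},\Om)$ there is $j\in\mathbb{N}$ and $f\in\mathcal{U}$ with $f\circ\tau^{j}\in\mathcal{V}$; \Cref{R:brik1} then yields a dense orbit, i.e. hypercyclicity with respect to $(j)_{j\in\mathbb{N}}$.

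Next I would set up the source-side geometry, which is where $c_{\widetilde{\Om}}$-finite compactness enters. Fix $f_1\in\mathcal{U}$, $f_2\in\mathcal{V}$ and basic neighbourhoods determined by compacta $L_1,L_2\subset\widetilde{\Om}$ and a tolerance $\eps>0$. Using finite compactness, enclose $L_1\cup L_2$ in a single closed Carath\'eodory ball $K=\overline{B_{c_{\widetilde{\Om}}}(x_0,r)}$, which is compact. I would check that such a ball is $\mco(\widetilde{\Om})$-convex: writing $g_h=m_{h(x_0)}\circ h$ for $h\in\mco(\widetilde{\Om},\mathbb{D})$ (with $m_a$ the Möbius involution of $\mathbb{D}$ sending $a\mapsto 0$), one has $c_{\widetilde{\Om}}(x_0,z)\le r\iff \sup_h|g_h(z)|\le\tanh r$, so $K=\bigcap_h\{|g_h|\le\tanh r\}$ is an intersection of $\mco(\widetilde{\Om})$-convex sublevel sets. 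Since $\tau$ is a $c_{\widetilde{\Om}}$-isometry, $\tau^{j}(K)=\overline{B_{c_{\widetilde{\Om}}}(\tau^{j}x_0,r)}$ is again such a ball. Compact divergence of $\{\tau^{j}\}$ forces $c_{\widetilde{\Om}}(x_0,\tau^{j}x_0)\to\infty$, so for large $j$ the balls $K$ and $\tau^{j}(K)$ are disjoint; moreover, choosing $h$ with $\rho(h(x_0),h(\tau^{j}x_0))$ large, the images $h(K)$ and $h(\tau^{j}K)$ lie in two disjoint round Poincar\'e discs of $\mathbb{D}$, whose union is polynomially convex, and pulling back a separating polynomial shows that $S:=K\cup\tau^{j}(K)$ is itself $\mco(\widetilde{\Om})$-convex for all large $j$.

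The heart of the argument, and what I expect to be the main obstacle, is the target-side construction. On a neighbourhood of $S$ define the holomorphic map $F$ equal to $f_1$ near $K$ and to $f_2\circ\tau^{-j}$ near $\tau^{j}(K)$; this maps into $\Om$ and $F(S)\Subset\Om$. One then wants a genuine $f\in\mco(\widetilde{\Om},\Om)$ with $f\approx F$ on $S$, for then $f\in\mathcal{U}$ while $f\circ\tau^{j}\approx f_2$ on $L_2$ gives $f\circ\tau^{j}\in\mathcal{V}$. Since $\widetilde{\Om}$ is Stein and $S$ is $\mco(\widetilde{\Om})$-convex, and since $\overline{\Om}$ is polynomially convex by \Cref{T:Polynostrict}, one can approximate $F$ on $S$ by maps into $\Om$ defined on neighbourhoods of $S$ via the Mergelyan–Forstneri\v{c} theorem \Cref{R:for04} (with empty totally real part). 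The difficulty is that $\Om$ is taut, hence Kobayashi hyperbolic and \emph{not} an Oka manifold, so neighbourhood approximation does not upgrade to a global map $\widetilde{\Om}\to\Om$ by any Oka principle: Oka–Weil only controls values on $S$ and the global image may leave $\Om$. The mechanism I would use to overcome this is the spiral flow of the target together with tautness. Because $\Om$ is \emph{strictly} spirallike, $X_t(\overline{\Om})\subset\Om$ for every $t>0$, so post-composition $g\mapsto X_t\circ g$ sends maps into $\overline{\Om}$ to genuine elements of $\mco(\widetilde{\Om},\Om)$ while moving values by $O(t)$; combined with the fact that $V$ is globally asymptotically stable (so $X_t(C)\subset\Om$ eventually for any compact $C$), this lets one confine the global image to $\Om$. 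Finally I would control the limit of such corrected approximants using tautness: by \cite{Kobaya}, a sequence in $\mco(\widetilde{\Om},\Om)$ either compactly diverges or has a convergent subsequence, and since our approximants converge to $F$ on $S$ they cannot diverge, hence converge to a limit $f\in\mco(\widetilde{\Om},\Om)$ with $f\approx F$ on $S$.

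With $f$ in hand one has $f\in\mathcal{U}$ and $C_\tau^{\,j}(f)=f\circ\tau^{j}\in\mathcal{V}$, establishing transitivity. Invoking \Cref{R:brik1} on the Baire space $\mco(\widetilde{\Om},\Om)$ then produces an element whose $C_\tau$-orbit is dense, which is exactly the asserted hypercyclicity with respect to $(j)_{j\in\mathbb{N}}$.
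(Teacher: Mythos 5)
Your reduction to Birkhoff transitivity and your use of $c_{\widetilde{\Om}}$-finite compactness to separate $K$ from $\tau^{j}(K)$ are fine, but the heart of your argument has a genuine gap that you yourself flag and then do not close. You need a \emph{global} map $f\in\mco(\widetilde{\Om},\Om)$ agreeing approximately with $f_1$ on $K$ and with $f_2\circ\tau^{-j}$ on $\tau^{j}(K)$. Oka--Weil (or \Cref{R:for04}) on the $\mco(\widetilde{\Om})$-convex set $S=K\cup\tau^{j}(K)$ only controls values on $S$: the resulting global holomorphic map $\widetilde{\Om}\to\cn$ has completely uncontrolled (typically unbounded) values off $S$, and since $\Om$ is taut, hence hyperbolic and not Oka, no Oka principle confines the image to $\Om$. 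Your proposed fix --- post-composing with the flow $X_t$ --- does not work: $X_t(\overline{\Om})\subset\Om$ only corrects values already in $\overline{\Om}$, and $X_t(C)\subset\Om$ for a general compact $C$ requires $t$ large depending on $C$; since the approximant's image off $S$ is not contained in any fixed compact set, there is no uniform $t$, and taking $t$ large destroys the approximation on $S$. The final tautness step is circular: to apply tautness to the sequence of approximants they must already lie in $\mco(\widetilde{\Om},\Om)$, which is exactly what is missing.

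The paper's proof circumvents this globalization problem entirely by a different mechanism (following Kusakabe's Lemma 4.1). After using strict spirallikeness to assume $g(\widetilde{\Om}),h(\widetilde{\Om})\Subset\Om$ (via $X_t\circ g\to g$, legitimate here because the image of $g$ is already in $\Om$), one observes that $\Om\times\Om$ is strictly spirallike with respect to $(V,V)$ and invokes \Cref{Re:r2} to produce a holomorphic disc $f\colon\D\to\mco(\Om\times\Om,\Om)$ joining the two coordinate projections $\pi_1,\pi_2$. Compact divergence plus $c_{\widetilde{\Om}}$-finite compactness give $c_{\widetilde{\Om}}(x,\tau^{j}(x))\to\infty$, hence scalar maps $\psi_m\in\mco(\widetilde{\Om},\D)$ with $\psi_m\to-1$ and $\psi_m\circ\tau^{j_m}\to 1$ locally uniformly; tautness gives convergent subsequences $\tilde g=\lim g\circ\tau^{j}$, $\tilde h=\lim h\circ\tau^{-j}$. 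The candidate maps $F_m(x)=f(\psi_m(x))\bigl(g(x),h(\tau^{-j_m}(x))\bigr)$ are then globally defined and automatically take values in $\Om$, with $F_m\to g$ and $F_m\circ\tau^{j_m}\to h$. You would need to replace your gluing-and-approximation step by a construction of this kind (or supply a genuinely new way to globalize into a hyperbolic target) for the proof to go through.
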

\begin{proof}
 We have to show that there exists $F \in \mco(\widetilde{\Om}, \Om)$ such that $\{C_{\tau}^{j}(F):j \in \mathbb{N} \}$ is dense in $\mco(\widetilde{\Om}, \Om)$  with respect to compact open topology.  In view of \Cref{R:brik1}, it is enough to show that for any pair of nonempty open subsets  $G, U\subset \mco(X, \Om)$, there exists $j \in \mathbb{N}$ such that $(C_{\tau})^{j}(G) \cap U \neq 0$. Let $g \in G$ and $h \in U$. Since $\Om$ is strictly spirallike domain with respect to the vector filed $V$, hence, for any $t>0$, we obtain $\overline{X_{t}(g(\widetilde{\Om}))} \subseteq X_{t}\big(\overline{g(\widetilde{\Om})}\big) \subseteq X_{t}(\OOm)$. Clearly, $X_{t}(\OOm) \subset \Om$ is a compact set, for all $t>0$ . Hence, $X_{t} \circ g (\widetilde{\Om}) \St \Om$. Here, $G$ is an open set containing $g$. Since $X_{t} \circ g \to g$ as $t \to 0^{+}$ in compact open topology,  hence,  for small enough $t>0$, we conclude that $g_{t}:=X_{t} \circ g$ in $G$. Therefore, without loss of generality we can assume that $g(\widetilde{\Om}), h(\tilde{\Om}) \St \Om$.
For $j \in \{1,2\}$ consider the following maps $\pi_{j}\colon \Om \times \Om  \to \Om$ defined by $\pi_{j}(x_{1},x_{2})=x_{j}$. Clearly,  $\Om \times \Om$ is spirallike domain with respect to the vector field $(V,V) \in \mathfrak{X}_{\mathcal{O}}(\cplx^{2n})$. Hence, from \Cref{Re:r2}, there exists $x_{1}, x_{2} \in \D$ and a holomorphic map $f \colon \D \to \mco(\Om \times \Om, \Om)$ such that $f(x_{1})=\pi_{1}, f(x_{2})=\pi_{2}$.  Hence, the rest of the proof goes the same as \cite[Lemma 4.1]{kus2017}.
\end{proof}

Now we present the proof of \Cref{T:densemap1}.

\begin{proof}[Proof of \Cref{T:densemap1}]
The topological space  $\mco(\mathcal{D}, Y)$ has a countable base with respect to the compact open topology (see \cite[Remark 1.1]{kus2017}). Hence, we can   choose a countable base $\{\mathcal{U}_{j}\}_{j \in \mathbb{N}}$ for the topological space $\mco(\mathcal{D}, Y)$ such that $\mathcal{U}_{j} \neq \emptyset$. Consider the set $\mathcal{W}_{j}=\{f \in \mco(\D ,\mco(\mathcal{D}, Y)): f(\D) \cap \mathcal{U}_{j} \neq \emptyset\}$. Clearly, the set is open. We will show that $\mathcal{W}_{j}$ is dense subsets of $\mco(\D, \mco(\mathcal{D}, Y))$ with respect to compact open topology for all $j \in \mathbb{N}$. Since $\mco(\D \times \mathcal{D}, Y)$ is a Baire space, hence, $\mco(\D,\mco(\mathcal{D}, Y))$ is also a Baire space. Therefore,   countable  intersection of open dense subsets in $\mco(\D, \mco(\mathcal{D}, Y))$ is again dense. Consequently, we conclude that $\mathcal{W}:=\cap_{j \in \mathbb{N}}\mathcal{W}_{j}$ is dense in $\mco(\D, \mco(\mathcal{D}, Y))$. 

Let $g \in \cap_{j \in \mathbb{N}}\mathcal{W}_{j}$ and $\mathcal{V}$ be any open subset of $\mco(\mathcal{D}, Y)$. Then, there exists $\mathcal{U}_{j}$ such that $\mathcal{U}_{j} \subset \mathcal{V}$. From the choice of $g$, there exists $z_{j} \in \D$ such that $g(z_{j}) \in \mathcal{U}_{j}$. Hence, for any open subset $\mathcal{V} \st \mco(\mathcal{D}, Y)$ we have $g(\D) \cap \mathcal{V} \neq \emptyset$. Therefore, $\overline{g(\D)}=\mco(\mathcal{D}, Y)$. Consequently, we get that $\mco(\mathcal{D}, Y)$ is $\D$-dominated. 

Now we show that  $\mathcal{W}_{j}$ are dense in $\mco(\D, \mco(\mathcal{D}, Y))$. Fix $ k \in \mathbb{N}$. Let $f \in \mco(\D, \mco(\mathcal{D}, Y))$. Now, invoking \Cref{P: dense1}, we get a sequence  $\{f_{m}\colon {\mathcal{D}}_{m} \to \mco(\mathcal{D} , Y)\}$ of holomorphic map from open neighbourhood of $\overline{\D} \cup [1,2]$ such that $f_{m}\big|_{\D} \to f$ as $m \to \infty$ and $f_{m}(2) \in \mathcal{U}_{k}$ for all $m \in \mathbb{N}$. Now, using \Cref{L:l1}, there exists a sequence of holomorphic maps $\phi_{m} \colon \D \to \mathcal{D}_{m}$ such that $\phi_{m} \to id$ locally uniformly on $\D$ and there exists $z_{m} \in \D$ such that $2 = \phi_{m}(z_{m})$ for all $m \in \mathbb{N}$. Now, we consider the sequence of the holomorphic  map $f_{m}\circ  \phi_{m} \colon \D \to \mco(\mathcal{D}, Y)$. Since $\phi_{m} \to id$ locally uniformly, hence,  we have  $f_{m}\circ  \phi_{m}\big|_{\D} \to f$ locally uniformly. Now, for all $m \in \mathbb{N}$, we have $f_{m}(\phi_{m}(z_{m}))=f_{m}(2) \in \mathcal{U}_{k}$.  Therefore, we conclude that for any $k \in \mathbb{N}$ and $f \in \mco(\D, \mco(\mathcal{D},Y))$, there exists  a sequence $ g_{m}:=f_{m}\circ \phi_{m} \in \mathcal{W}_{k}$ such that $g_{m} \to f$ locally uniformly $\D$. Consequently, each $\mathcal{W}_{k}$ are dense.

\end{proof}

\begin{remark}\label{Re:r1}
From the above proof we obtain that $\overline{\mathcal{W}}=\mco(\D,\mco(\mathcal{D}, Y))$ and for all $f \in \mathcal{W}$ we have $\overline{f(\D)}=\mco(\mathcal{D}, Y)$. Therefore, the set of all dense holomorphic maps $f\colon \D \to \mco(\mathcal{D}, Y)$ is dense in $\mco(\D,\mco(\mathcal{D}, Y))$.
\end{remark}
 \begin{remark}\label{Re:r2}
With a little modification  of the proof of the \Cref{P: dense1} and \Cref{L:l1} we can prove that if $u_{1}, u_{2} \in\mco(\Om, Y)$ such that both $u_{1}, u_{2}$ has holomorphic extension on $\OOm$, then, there exists a holomorphic map $f\colon \D \to \mco(\Om, Y)$ and $x_{1}, x_{2} \in \D$  and such that $f(x_{1})=u$ and $f(x_{2})=v$.

 \end{remark}

Now we present the proof of \Cref{T:densemap2}.
\begin{proof}[Proof of \Cref{T:densemap2}]
Given that  $\mathcal{D} \St \cn$ is  a bounded  pseudoconvex domain  containing the origin that is strictly spirallike with respect to the complete globally asymptotic stable vector field $V \in \mathfrak{X}_{\mathcal{O}}(\mathbb{\cplx}^{n})$  and $c_{\mathcal{D}}$-finitely compact. We have to show that there exists a holomorphic map  $G\colon \mathcal{D}\to Y$ such that the set  $\{G\circ \tau^{j}|j \in \mathbb{N}\}$ is dense in $\mco(\mathcal{D}, Y)$ with respect to the compact open topology. From \Cref{T:densemap1}, there exists  a holomorphic map $f \colon \D \to \mco(\mathcal{D}, Y)$ such  that $\overline{f(\D)}=\mco(\mathcal{D}, Y)$. Let  $\widehat{f} \colon \D \times \mathcal{D} \to Y$ defined by $\widehat{f}(z,x)=(f(z))(x)$ is  associated holomorphic map  induced by $f$. Clearly, we obtain a continuous map $\widehat{f}_{*}\colon \mco(\mathcal{D}, \D \times \mathcal{D}) \to \mco(\mathcal{D}, Y)$ defined by $\widehat{f}_{*}(g)=\widehat{f} \circ g$. At first  we show that $\overline{\widehat{f}_{*}(\mco(\mathcal{D}, \D \times \mathcal{D}) )}=\mco(\mathcal{D}, Y)$. Let $\{U_{m}\}_{m \in \mathbb{N}}$ be countable basis for $\mco(\mathcal{D}, Y)$. Since $f$ is dense holomorphic map, hence, there exists $z_{m} \in \D$ such that $f(z_{m}) \in U_{m}$. Now define $g_{m} \colon \mathcal{D} \to \D \times \mathcal{D}$ by $g_{m}(x)=(z_{m},x)$. Then, we have $\widehat{f}_{*}(g_{m}(x))=\widehat{f}(z_{m},x)=f(z_{m})$. Since $\{U_{m}\}$ are basic open sets, hence, we conclude that for every open set $U \in \mco(\mathcal{D}, Y)$ there exists $g \in \mco(\mathcal{D}, \mco(\D, \mathcal{D}))$ such that $\widehat{f}(g) \in U $.

Since $\mathcal{D}$ is $c_{\mathcal{D}}$-finitely compact hence it is particularly a taut domain. Therefore, $\D \times \mathcal{D}$ is a taut domain and also   strictly spirallike with respect to complete globally asymptotic stable vector field $(-I, V) \in \mathfrak{X}_{\mathcal{O}}(\mathbb{\cplx}^{1+n})$. Since $\mathcal{D}$ is a bounded pseudoconvex domain that is $c_{\mathcal{D}}$-finitely compact and $\{\tau^{j}\}$ compactly diverges on $\mathcal{D}$, hence, invoking \Cref{L:GT2}, we infer that there exists  $\mco(\mathcal{D} ,\D \times \mathcal{D}) $-universal map $F$   for $\tau$. Hence, we obtain that $\overline{\{F\circ \tau^{j}| j \in \mathbb{N}\}}=\mco(\mathcal{D}, \D \times \mathcal{D}) $. Let  us consider the map $G=\widehat{f}_{*}(F) \in \mco(\mathcal{D}, Y)$.  We show that $\{G\circ \tau^{j}| j \in \mathbb{N}\}$ is dense in $\mco(\mathcal{D}, Y)$. 

 Let  $\mathcal{U} \st \mco(\mathcal{D}, Y)$  be any open set. Since $\overline{\widehat{f}_{*}(\mco(\mathcal{D}, \D \times\mathcal{D}) )}=\mco(\mathcal{D}, Y)$, hence, there exists $g \in \mco(\mathcal{D}, \D \times \mathcal{D})$ such that $\widehat{f}_{*}(g) \in \mathcal{U}$.
Now $\widehat{f}_{*}$ is a  continuous map. Therefore, $\hat{f}_{*}^{-1}(\mathcal{U})$ is an open set in $\mco(\mathcal{D}, \D \times \mathcal{D})$ containing $g$. Since, $\overline{\{F \circ \tau^{j}|j \in \mathbb{N}\}}=\mco(\mathcal{D}, \D \times \mathcal{D})$, hence, there exists $j_{0} \in \mathbb{N}$ such that $F\circ \tau ^{j_{0}} \in \widehat{f}_{*}^{-1}(\mathcal{U})$. Consequently,  $\hat{f}_{*}\circ F \circ \tau^{j_{0}} \in \mathcal{U}$. Since we have $G\circ \tau^{j}=\hat{f} \circ F \circ \tau^{j}$, therefore, we conclude  that for every open subset $\mathcal{U}$ of $\mco(\mathcal{D} ,Y)$ there exists $j_{0} \in \mathbb{N}$ such that $G \circ \tau^{j_{0}} \in \mathcal{U}$. This proves the existence of $\mco(\mathcal{D}, Y)$-universal map for $\tau$. This proves the theorem.
\end{proof}
\begin{corollary}
 Let $\Om \St \cn$ as \Cref{T:densemap2} and $\tau \in Aut(\Om)$ is generalized translation. If there exists $\mco(\Om,Y)$-universal map for $\tau$, then  $\mco(\Om ,Y)$ is $\D$-dominated.     
 \end{corollary}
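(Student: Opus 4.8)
The plan is to reduce the statement to \Cref{T:densemap1}, since the hypotheses imposed on $\Om$ here are strictly stronger than what that theorem demands. First I would spell out what ``$\Om \St \cn$ as \Cref{T:densemap2}'' means: $\Om$ is a bounded pseudoconvex domain containing the origin, it is $c_{\Om}$-finitely compact, and it is strictly spirallike with respect to a complete globally asymptotic stable vector field $V \in \mathfrak{X}_{\mathcal{O}}(\cn)$. Discarding the $c_{\Om}$-finite-compactness, the remaining conditions are verbatim the hypotheses of \Cref{T:densemap1}.

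Therefore the single step of the proof is to apply \Cref{T:densemap1} to this $\Om$ and the given connected complex manifold $Y$, which immediately produces a holomorphic map $f \colon \D \to \mco(\Om, Y)$ with $\overline{f(\D)} = \mco(\Om, Y)$; that is, $\mco(\Om, Y)$ is $\D$-dominated. I would stress that this argument never uses the standing assumption that an $\mco(\Om, Y)$-universal map for $\tau$ exists: under the stated conditions on $\Om$ the conclusion is already unconditional. The universal-map hypothesis enters only at the level of interpretation --- the corollary is designed to be the converse of \Cref{T:densemap2}, which runs $\D$-domination $\Rightarrow$ existence of a universal map, and here we are recording the reverse implication within the same class of domains.

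There is essentially no obstacle: the only thing to check is the inclusion of hypotheses, namely that every $\Om$ admissible for \Cref{T:densemap2} is admissible for \Cref{T:densemap1}, and this is transparent because the two results share their spirallikeness, pseudoconvexity, boundedness and origin-containing assumptions, with \Cref{T:densemap2} merely adding $c_{\Om}$-finite compactness. One could alternatively attempt a direct construction, building a dense holomorphic disc through the dense orbit $\{F \circ \tau^{j}\}_{j}$ of a universal map $F$ --- e.g. by producing a holomorphic family $\Theta \colon \D \times \Om \to \Om$ whose slices interpolate the iterates $\tau^{j}$ along a sequence approaching $\partial \D$ and setting $f(z) = F \circ \Theta(z, \cdot)$ --- but such an approach merely reconstructs the Baire-category and approximation machinery already packaged in \Cref{T:densemap1} (via \Cref{P: dense1} and \Cref{L:l1}), so the citation is the economical route.
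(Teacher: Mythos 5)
Your proposal is correct, but it is not the route the paper takes. You observe that the hypotheses of \Cref{T:densemap2} subsume those of \Cref{T:densemap1} (the extra assumptions being completeness of $V$ and $c_{\Om}$-finite compactness), so \Cref{T:densemap1} applied with target $Y$ already gives $\D$-domination of $\mco(\Om,Y)$ unconditionally, making the universal-map hypothesis logically superfluous for this class of domains; that is a valid and in fact shorter argument. The paper instead genuinely uses the universal map: it applies \Cref{T:densemap1} with target $\Om$ itself to obtain a dense holomorphic map $f\colon \D \to \mco(\Om,\Om)$, then pushes forward along the universal map $F$ via the continuous operator $F_{*}(g)=F\circ g$, so that $\overline{(F_{*}\circ f)(\D)} \supseteq F_{*}\bigl(\overline{f(\D)}\bigr) \supseteq \{F\circ \tau^{j}\}_{j}$, which is dense because $F$ is universal. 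What the paper's longer route buys is a clean isolation of the implication the corollary is meant to record --- namely that existence of an $\mco(\Om,Y)$-universal map together with $\D$-domination of $\mco(\Om,\Om)$ forces $\D$-domination of $\mco(\Om,Y)$ --- an argument that would survive in settings where \Cref{T:densemap1} is not directly available for an arbitrary target $Y$; what your route buys is economy, at the cost of revealing that, as stated, the corollary's hypothesis on $\tau$ does no work.
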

This can be seen as follow:  Let $F\colon \mathcal{D} \to Y$ be a $\mco(\mathcal{D}, Y)$-universal map for $\tau$. From \Cref{T:densemap1}, we get a holomorphic map $f\colon \D \to \mco(\mathcal{D}, \mathcal{D})$ such that image of $f$ is dense in $\mco(\mathcal{D}, \mathcal{D})$.  Now $F_{*}\colon \mco(\mathcal{D}, \mathcal{D}) \to \mco(\mathcal{D}, Y)$ defines a  continuous map by $F_{*}(g)=F\circ g$. Then $F_{*} \circ f \colon \D \to \mco(\mathcal{D}, Y)$ is a dense holomorphic map.

\begin{proof}[Proof of \Cref{T:densemap3}]
    $(1) \impl (2)$ follows from definition. Since $\mathcal{D}$ is spirallike with respect to complete globally asymptotic stable vector field $V \in \mathfrak{X}_{\mathcal{O}}(\mathbb{\cplx}^{n})$, hence it  is contractible (see\cite[Page 21]{CG}). Now, from  \cite[Theorem 1]{Hung94}, we get $(2)\Leftrightarrow (3)$. Suppose that  $(3)$ holds. Since $\mathcal{D}$ is $c_{\mathcal{D}}$-finitely compact, hence, from \cite[Lemma 10]{Zajac16}, we obtain that $C_{\tau} \colon \mco(\Om) \to \mco(\Om)$ is hypercyclic with respect to $(n)$. Then, from \cite[corollary 8]{Zajac16} and \cite[Lemma 2]{Zajac16} we conclude  that, for every compact $\mathcal{O}(\mathcal{D})$-convex subset $K$, there exist $j_{K}$ such that $K \cup \tau^{j_{K}}(K)$ is $\mathcal{O}(\mathcal{D})$-convex. Hence, $(3) \impl (1)$. 
    
   
\end{proof}

\section{ examples}\label{S:Example}

\begin{example}\label{E:ex2}
    Let us consider the following domain: Let $r>0$ and 
  $$
  \Omega=\{(z_1,z_2)\in \mathbb{C}^{2}\;|\;|z_{1}|<r,~|z_2|<e^{-|z_{1}|})\}.
  $$
  Clearly, $\Om$ is a Hartogs domain (see \cite[Page 406]{Stoutbook}) over an open ball of radius $r$ in $\cplx$. Since $-\log{e^{-|z_{1}|}}=|z_{1}|$ is plurisubharmonic function hence from \cite[page 406]{Stoutbook}, we get that $\Om$ is pseudoconvex domain.
  
  Now consider the vector field $F(z_1,z_2)=(-2z_1,-3z_2+z_1z_2)$. The flow of the vector field is defined by $X(t,z)= \left(z_1e^{-2t},z_2e^{-3t}e^{\frac{z_1}{2}(1-e^{-2t})}\right)$, where $(t,z) \in \mathbb{R} \times \mathbb{C}^{2}$. We show that $\Om$ is strictly spirallike with respect to $F$.
  Let $(z_{1},z_{2}) \in 
  \Om$ and $t>0$. Suppose that $w_{1}=z_{1}e^{-2t}$ and $w_{2}=z_2e^{-3t}e^{\frac{z_1}{2}(1-e^{-2t})}$. Clearly, $|w_{1}|<r$ for all $t>0$. Since, for all $t>0$, we have $(1-e^{-2t})(-|z_{1}|+\frac{\rl(z_{1})}{2})-3t<0$, hence for all $t>0$, we have the following 
  \begin{align*}
  |w_{2}|&=|z_{2}|e^{-3t+\frac{\rl(z_{1})}{2}(1-e^{-2t})}\\
  &\leq e^{-|z_{1}|-3t+\frac{\rl(z_{1})}{2}(1-e^{-2t})}\\
  &<e^{-|w_{1}|}.
\end{align*}
Therefore, $\Om$ is a bounded pseudoconvex domain that is a strictly spirallike domain with respect to the complete globally asymptotic stable vector field $F$. Clearly, $\Om$ is not  convex.

Let for $j =\{1,2\}$ $V_{j}:=\{(z_{1},z_{2}) \in \CC| z_{j} =0\}$. Clearly, $\Om$ is a pseudoconvex Reinhardt domain such that $\Om \cap V_{j} \neq \emptyset$. Therefore, from \cite[Theorem 2]{Zwo2000} it follows that $\Om$ is $c_{\Om}$-finitely compact.

Therefore, the conclusion of \Cref{T:densemap1}, \Cref{T:densemap2} is  true for this domain. 
\end{example}

Next, we give an  example of a  non-convex, strongly pseudoconvex domain with polynomially convex closure that is biholomorphic to a bounded strongly convex domain. Therefore, the conclusion of  \Cref{P:global Narshiman} as well as \Cref{T:loewner} holds.
\begin{example} \label{E:ex1}
	Let $\Om_{1}=\{(z_{1},z_{2})\in \mathbb{C}^{2}:|z_{1}|^2+|z_2|^{2}+|z_{1}|^{2}|z_{2}|^2-1<0\}$.  Here, $\Om_{1}$ is a strongly convex domain. Since $\Om_{1}$ is a circular domain hence from Cartan's Theorem \cite{cartan}, it follows that any biholomorphism from the open unit ball  onto $\Om_{1}$ is a linear map.  Therefore, the defining function of $\Om_{1}$ can not contain the term $|z_{1}|^{2}|z_{2}|^{2}$. Hence, $\Om_{1}$ is not biholomorphic to the open unit ball. 
 
 Let $U \st \cplx$ be any non-convex simply connected domain.  Suppose that $p,q \in U$ can not be connected by a straight line contained in $U$. By Riemann mapping theorem, we get  there exists a biholomorphism  $f\colon \D \to U$ such that $f(0)=p$ and $f(x)=q$. Clearly $x \neq 0$. Choose $\eps>0$ such that $0<\eps <\frac{1}{|x|}-1$. Let $\D(0, 1+\epsilon)=\{z \in \cplx| |z|<1+\eps\}$. Note that $\overline{\Om}_{1} \St \D(0, 1+\epsilon) \times \D(0, 1+\epsilon)$.

 Let  $G\colon \D(0, 1+\epsilon) \times \D(0, 1+\epsilon) \to U \times \D(0, 1+\epsilon)$ defined by  $G(z,w)=(f(\frac{z}{1+\eps}),w)$. Here $G(0,0)$ and $G((1+\eps)x,0)$ can not be connected by a straight line by construction. Hence, $G(\Om_{1})$ is not convex. Since $G$ has holomorphic extension on a neighborhood of $\overline{\Om}_{1}$, hence $G(\Om_{1})$ is strongly pseudoconvex domain with $\smoo^{\infty}$ boundary. Note that $G$ is a biholomorphism from a star-shaped domain onto a Runge domain. Hence, in view of \cite[Theorem 2.1]{AnderLemp}, we conclude that $G$ can be approximated by $Aut(\CC)$. Consequently, $G(\overline{\Om}_{1})$ is polynomially convex.
 Therefore, from \Cref{P:global Narshiman}, we conclude that there exists  $\Psi \in \text{Aut}(\cn)$ such that $\Psi(G(\Om_{1}))$ is convex. Equivalently, $G$ can be embedded into a filtering Loewner chain.
\end{example}

 \noindent {\bf Acknowledgements.} 
Sanjoy Chatterjee is supported by a CSIR fellowship (File No-09/921(0283)/2019-EMR-I) and also would like to thank Golam Mostafa Mondal for several discussions and fruitful comments. Sushil Gorai is partially supported by a Core Research Grant (CRG/2022/003560) of SERB, Government of India.

\bibliographystyle{plain}
\bibliography{biblio}

\end{document}